\newcounter{theoremcounter}
\newcounter{dummycounter}
\newcounter{quescounter}
\newcounter{emptycounter}
\newcounter{defcounter}
\newtheorem{theorem}[theoremcounter]{Theorem}
\newtheorem{question}[quescounter]{Question}
\newtheorem{lemma}[theoremcounter]{Lemma}
\newtheorem{proposition}[theoremcounter]{Proposition}
\newtheorem{corollary}[theoremcounter]{Corollary}
\newtheorem{remark}{Remark}
\newtheorem{definition}[defcounter]{Definition}
\newcounter{eqncounter}
\numberwithin{equation}{eqncounter}
\def\IR{\mathbb R}
\def\IC{\mathbb C}
\def\IZ{\mathbb Z}
\def\IN{\mathbb N}
\def\IQ{\mathbb Q}
\def\IT{\mathbb T}
\def\t{T}
\def\Q{B}
\renewcommand{\vec}[1]{\mbox{\boldmath$#1$}}
\def\N{\mathcal{N}}
\def\Oseen{{\mathcal{O}}}
\def\Qbar{\overline{\IQ}}
\def\valpha{\vec{\alpha}}
\def\vbeta{\vec{\beta}}
\def\vxi{\vec{\xi}}
\def\vtau{\vec{\tau}}
\def\vcxii{\vec{c}_{\xi_i}}
\def\vcxi{\vec{c}_{\xi}}
\def\vcgammai{\vec{c}_{\gamma_i}}
\def\vcgammainull{\vec{c}_{\gamma_{i+i_0}}}
\def\u{u}
\def\xij{\xi_{j}^{(i)}}
\def\House#1{{%
    \setbox0=\hbox{$#1$}
    \vrule height \dimexpr\ht0+1.4pt width .4pt depth \dp0\relax
    \vrule height \dimexpr\ht0+1.4pt width \dimexpr\wd0+2pt depth \dimexpr-\ht0-1pt\relax
    \llap{$#1$\kern1pt}
    \vrule height \dimexpr\ht0+1.4pt width .4pt depth \dp0\relax
}}
\def\house#1{{%
    \setbox0=\hbox{$#1$}
    \vrule height \dimexpr\ht0+1.4pt width .4pt depth \dp0\relax
    \vrule height \dimexpr\ht0+1.4pt width \dimexpr\wd0+2pt depth \dimexpr-\ht0-1pt\relax
    \llap{$#1$\kern1pt}
    \vrule height \dimexpr\ht0+1.4pt width .4pt depth \dp0\relax
}}
\def\housealp{{%
    \setbox2=\hbox{$\alpha$}
    \vrule height \dimexpr\ht2+1.75pt width .4pt depth \dp2\relax
    \vrule height 6.05pt width \dimexpr\wd2+2pt depth -5.65pt
    \llap{$\alpha$\kern1pt}
    \vrule height \dimexpr\ht2+1.75pt width .4pt depth \dp2\relax
}}
\def\Hom{\mathrm{Hom}}
\title{Lehmer without Bogomolov}
\address{Fabien Pazuki. University of Copenhagen, Institute of Mathematics, Universitetsparken 5, 2100 Copenhagen, Denmark, and Universit\'e de Bordeaux, IMB, 351, cours de la Lib\'eration, 33400 Talence, France.}
\email{fpazuki@math.ku.dk}
\address{Niclas Technau. Department of Mathematics, University of Wisconsin-Madison, 480 Lincoln Dr, Madison, WI-53706, USA}
\email{technau@wisc.edu}
\address{Martin Widmer. Department of Mathematics, Royal Holloway, University of London,
Egham, Surrey, TW20 0EX, United Kingdom}
\email{martin.widmer@rhul.ac.uk}
\title{Northcott numbers for the house and the Weil height}
\author{Fabien Pazuki, Niclas Technau and Martin Widmer}
\dedicatory{In memory of  Professor Andrzej Schinzel, 1937--2021.}
\subjclass{11G50, 11R04}
\keywords{Northcott property, Robinson numbers, Weil height, house, Bogomolov property}
\begin{document}

\maketitle

\begin{abstract}
For an algebraic number $\alpha$ and $\gamma\in \IR$, let $\housealp$ be the house, $h(\alpha)$ be the (logarithmic) Weil height,
and $h_\gamma(\alpha)=(\deg\alpha)^\gamma h(\alpha)$ be the $\gamma$-weighted (logarithmic) Weil height of $\alpha$.
Let $f:\Qbar\to [0,\infty)$ be a function on the algebraic numbers $\Qbar$, and let  $S\subset \Qbar$.
The Northcott number $\N_f(S)$ of  $S$, with respect to $f$,
is the infimum of all $X\geq 0$ such that 
$\{\alpha \in S; f(\alpha)< X\}$ is infinite. 
This paper studies the set of Northcott numbers $\N_f(\Oseen)$ for subrings of $\Qbar$
for the house, the Weil height,
and  the $\gamma$-weighted Weil height.
We show:
\begin{enumerate}
\item Every $t\geq 1$ is the Northcott number of a ring of integers of a field  w.r.t. the house $\house{\cdot}$.
\item For each $t\geq 0$ there exists a field with Northcott number in $ [t,2t]$ w.r.t. the Weil height $h(\cdot)$.
\item For all $0\leq \gamma\leq 1$ and $\gamma'<\gamma$ there exists a field $K$ with $\N_{h_{\gamma'}}(K)=0$ and $\N_{h_\gamma}(K)=\infty$.
\end{enumerate}
For $(1)$ we provide examples that satisfy an analogue of Julia Robinon's property (JR),  examples that satisfy an analogue of
Vidaux and Videla's isolation property, and examples that satisfy  neither of those. 
Item $(2)$ concerns a question raised by Vidaux and Videla due to
its direct link with decidability theory via the Julia Robinson number.
Item (3) is a strong generalisation of the known fact that there are fields that satisfy the Lehmer conjecture but 
which are not Bogomolov in the sense of Bombieri and Zannier.
\end{abstract}

\section{Introduction}
In this article we investigate the spectrum of Northcott numbers 
of subrings of the algebraic numbers $\Qbar$ for the house and the Weil height.
The Northcott number with respect to the Weil height was introduced 
by Vidaux and Videla \cite{ViVi16}, and refines the concept of the Northcott property 
which goes back to Northcott \cite{Northcott49, Northcott50}
but was formally defined by Bombieri and Zannier \cite{BoZa}.  
Northcott numbers for various other height functions have been around implicitly 
and explicitly in the Bogomolov property, the
Lehmer conjecture, the Schinzel--Zassenhaus conjecture (now Dimitrov's theorem), 
the Julia Robinson property, and  the Julia Robinson number. 
To unify all these concepts under the umbrella of Northcott numbers
 we start with the following obvious generalisation.
\begin{definition}[Northcott number]\label{Northcottnum}
For a subset $S$ of  the algebraic numbers $\Qbar$ and $f:\Qbar\to [0,\infty)$
we set 
$$\N_f(S)=\inf\{t\in [0,\infty); \#\{\alpha\in S; f(\alpha)< t\}=\infty\},$$
with  the usual interpretation $\inf \emptyset=\infty$. 
We call $\N_f(S)\in [0,\infty]$ the \emph{Northcott number} of $S$ (with respect to $f$).
If $\N_f(S)=\infty$ then we say that $S$ has the Northcott property (with respect to $f$). 
\end{definition}

Throughout this introduction ring always means not the zero ring.
Next we give some background on the relevant results that use the
 house $\house{\cdot}$ of an algebraic number 
(i.e., the maximum modulus of its conjugates over $\IQ$).

In 1959, Julia Robinson \cite{Robinson1959}  showed the undecidability of the first order theory of any number field, extending the case $\IQ$ dealt with in her  Ph.D. dissertation. 
A few years later she began \cite{Robinson1962, JuliaRobinson42} to investigate 
decidability questions for certain rings of totally real algebraic 
integers of infinite degree. To this end she introduced the following 
property, nowadays called property (JR).
Let $\Oseen$ be a ring of totally real algebraic integers, 
and let $\Oseen^+\subset \Oseen$ be its subset of totally positive elements.
The ring $\Oseen$ has property (JR) if  the following  holds
$$ \#\{\alpha\in \Oseen^+; \housealp< \N_{\house{\cdot}}(\Oseen^+)\}=\infty.$$
As usual, $x< \infty$ is true for all $x\in \IR$ by convention. 
Note that the Northcott property implies the property (JR).
Let $\IN=\{1,2,3,\ldots\}$ be the set of positive integers, and set  $\IN_{0}=\IN \cup \{0\}$.

Robinson showed that the semi-ring $(\IN_{0},0,1,+,\cdot)$ is first order definable in $\Oseen$
for any ring $\Oseen$ of totally real algebraic integers with property (JR)
(not necessarily the ring of integers of a field, as pointed out by Vidaux and Videla \cite{ViVi15AMS}).
She then proved that the rings of integers $\Oseen_K$ of the maximal 
totally real extension $K$ of $\IQ$, and  of  $K=\IQ(\sqrt{n}; n\in \IN)$
both have property (JR): 
the former since the infimum $4$ in the definition of 
$\N_{\house{\cdot}}(\Oseen_K^+)$ is attained, 
and the latter since it has the Northcott property. 
Hence, both have undecidable first order theory.
Since the aforementioned field is a pro-$2$ extension of $\IQ$, it follows from  a result of Videla \cite{videla2000definability},
that its  ring of integers  is  first order definable in this field, and thus  the field inherits the undecidability from its ring of integers.  

A question that arose from Robinson's work, explicitly proposed by Vidaux and Videla \cite[Question 1.5]{ViVi15AMS}, 
is, which numbers can be realised as Northcott numbers\footnote{Vidaux and Videla call $\N_{\House{\cdot}}(\Oseen^+)$ the Julia Robinson number of the ring $\Oseen$.} $\N_{\house{\cdot}}(\Oseen_K^+)$.
Important progress on this question was made by 
Gillibert and Ranieri \cite{GillibertRanieri} who proved that all numbers of the form $[2\sqrt{2n}]+2\sqrt{2n}$ or $8n$, with $n\geq 1$ odd and square-free, are of this type.
Further results on the distribution of the Northcott numbers $\N_{\House{\cdot}}(\Oseen^+)$ were obtained by Castillo \cite{CastilloAA}, and Castillo, Vidaux, and Videla \cite{CVV20}.  

Another question, explicitly proposed by Robinson herself, is, if in fact the  ring of integers $\Oseen_K$ of every totally real field $K$ has property (JR).
Gillibert and Ranieri \cite{GillibertRanieri} noted that all their examples do have property (JR).

Vidaux and Videla \cite[Definition 1.2]{ViVi15AMS}  introduced a related condition which they call isolation property, and which also allows, by the same strategy as for the property (JR),
to define the semi-ring $(\IN_0,0,1,+,\cdot)$ by a  first order formula in $\Oseen$. 
A ring $\Oseen$  of totally real algebraic integers has the isolation property if it does not have property (JR), and if 
there exists $M>\N_{\house{\cdot}}(\Oseen^+)$ such that for all $\epsilon>0$ we have 
$$\#\{\alpha\in \Oseen^+; \N_{\house{\cdot}}(\Oseen^+)+\epsilon\leq \housealp<M\}<\infty.$$
Since there are only  finitely many totally real integers that assume a fixed house value $t$ (in particular that assume the value $\N_{\house{\cdot}}(\Oseen^+)$) it follows that the above cardinality gets arbitrarily large as $\epsilon$ gets small.
Vidaux and Videla \cite{ViVi15AMS} have constructed rings of totally real algebraic integers that satisfy their isolation property but it is unknown if any of these is the ring of integers $\Oseen_K$
of a field, and so Robinson's question also remains open.
Nevertheless, examining decidability of subrings and subfields of $\Qbar$
by Julia Robinson's strategy (and refinements thereof)
is an active area of research. From the growing body of literature, 
we refer the reader to the work (and references therein) 
of Shlapentokh \cite{Shlapentokh2018},
Springer \cite{Springer2020}, 
as well as Mart{\'\i}nez-Ranero, Utreras, and Videla \cite{Martinez2020}. \\

Our first result shows that if we consider the full set of algebraic integers, 
and we do not restrict to totally real fields, then every real number $t\geq 1$ 
is a Northcott number with respect to the house.
Furthermore, the analogous question to Julia Robinson's one can be answered in the negative, i.e., the infimum in the definition of $\N_{\house{\cdot}}(\Oseen_{K})$ is not 
always attained. Finally, we can also construct rings of integers $\Oseen_K$ with given Northcott number  that neither have the analogue of property (JR) nor the 
analogue of the isolation property\footnote{It seems natural to 
impose the additional condition $\N_{\house{\cdot}}(\Oseen)$ is attained only for finitely many elements of $\Oseen$ for the analogue of the isolation property in the non totally real case,
since this condition  automatically holds only in the totally real case.}.  

 \begin{theorem}\label{thm: spectrum full for house function}
Let $t>1$ be a real number. 
\begin{enumerate}
\item[(a)]There exists a field $K$ of algebraic numbers such that its ring
of integers $\Oseen_{K}$ satisfies $\N_{\house{\cdot}}(\Oseen_{K})=t$ and $\#\{\alpha\in \Oseen_K; \housealp< t\}=\infty$.

\item[(b)]There exists $M>t$ and a field $K$ of algebraic numbers such that its ring
of integers $\Oseen_{K}$ satisfies $\N_{\house{\cdot}}(\Oseen_{K})=t$ and $\#\{\alpha\in \Oseen_K; \housealp\leq t \text{ or } t+\epsilon\leq \housealp< M\}<\infty$ for all $\epsilon>0$.

\item[(c)]There exists a field $K$ of algebraic numbers such that its ring
of integers $\Oseen_{K}$ satisfies $\N_{\house{\cdot}}(\Oseen_{K})=t$ and $\#\{\alpha\in \Oseen_K; \housealp\leq t\}<\infty$ and $\#\{\alpha\in \Oseen_K; t+\epsilon\leq \housealp< M\}=\infty$  for all $M>t$ and all small enough $\epsilon>0$.
\end{enumerate}
\end{theorem}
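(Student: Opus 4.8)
The plan is to construct each field $K$ as a nested union $K=\bigcup_{n\geq 1} K_n$ of number fields, where at each stage we adjoin one carefully chosen algebraic integer whose conjugates all have house close to a prescribed target. The basic building block is this: for a target value $\tau>1$ one wants infinitely many algebraic integers $\beta$ with $\house{\beta}$ slightly below $\tau$, together with good control so that adjoining them does not create unexpectedly small houses. A natural source is to take $\beta$ to be a root of $x^{d}-a$ for suitable $a\in\IZ$ with $|a|^{1/d}$ just below $\tau$, or more robustly to use the construction that, given a number field $F$ and $\tau>1$, produces an integral element $\beta$ generating a large-degree extension of $F$ with all conjugates of absolute value in a prescribed narrow window $[\tau-\delta,\tau)$; the compositum $K_{n}=K_{n-1}(\beta_n)$ then has the property that its ring of integers contains infinitely many elements (the Galois conjugates of the $\beta_n$ and their products, suitably restricted) of house approaching the target. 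One must track, via the triangle inequality applied on each archimedean place, that no element of $\Oseen_{K_n}$ has house below the running infimum; this is where one needs the elements $\beta_n$ to grow in degree fast enough that the previously constructed "bad" elements stay isolated.

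For part (a), I would arrange all targets to equal $t$: pick $\beta_n$ with every conjugate of modulus in $[t-1/n,\, t)$ and of degree over $K_{n-1}$ growing rapidly. Then $\house{\beta_n}\to t$ from below, so $\N_{\house{\cdot}}(\Oseen_K)\leq t$, and the infinitely many $\beta_n$ already witness $\#\{\alpha\in\Oseen_K:\house{\alpha}<t\}=\infty$. The reverse inequality $\N_{\house{\cdot}}(\Oseen_K)\geq t$ is the crux: one must show only finitely many integers in $K$ have house $<t-\epsilon$ for each $\epsilon>0$. This should follow from a lemma (the analogue of the standard fact behind Bogomolov/Northcott constructions) stating that if the degrees $[K_n:K_{n-1}]$ grow quickly enough and the conjugates of $\beta_n$ are uniformly bounded away from $0$ and from the critical annulus, then any $\alpha\in\Oseen_{K_n}\setminus\Oseen_{K_{n-1}}$ satisfies $\house{\alpha}\geq t-\epsilon$ once $n$ is large; elements living in $\Oseen_{K_{n-1}}$ are then handled by induction, there being only finitely many at each fixed level below any given bound since $K_{n-1}$ has finite degree.

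For parts (b) and (c) I would instead use a sequence of targets: choose an auxiliary value $M>t$ and let the first few $\beta_n$ have all conjugates in a window just below $M$ (contributing finitely many elements of house $<M$, none in $(t,t+\epsilon)$ or below $t$), and then from some stage on switch to $\beta_n$ with conjugates in a window approaching $t$ from below — so that $\N_{\house{\cdot}}(\Oseen_K)=t$ while there is a "gap": finitely many elements in $[0,t]\cup[t+\epsilon,M)$ in case (b). For (c) one wants the opposite gap structure: finitely many elements of house $\leq t$ but, for every $M>t$ and small $\epsilon$, infinitely many with house in $[t+\epsilon,M)$; here I would choose the targets $\tau_n$ to \emph{decrease} to $t$ but stay strictly above it (e.g.\ $\tau_n=t+1/n$), with conjugates of $\beta_n$ in $[\tau_n-\delta_n,\tau_n)$ and $\delta_n\to 0$ fast, so that $\house{\beta_n}>t+\epsilon$ for all large $n$ once $\epsilon$ is fixed, giving infinitely many elements in $[t+\epsilon,M)$, while the only elements of house $\le t$ are the finitely many from a chosen initial block (or none at all, adjusting $\Oseen_K$ so that $\N=t$ is forced but not attained).

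The main obstacle, common to all three parts, is the lower bound on the Northcott number: proving that the construction does not inadvertently produce infinitely many integers of small house. This requires a quantitative "house does not drop in towers" lemma controlling $\house{\alpha}$ for $\alpha$ in a compositum in terms of the houses of the generators and the geometry of their conjugates on each archimedean place, and a bookkeeping argument ensuring the degree growth $[K_n:K_{n-1}]\to\infty$ outpaces the accumulation of low-house elements. Granting such a lemma (which is implicit in the earlier parts of the paper, or provable by a direct Mahler-measure/product-formula estimate over the places of $K_n$), the three constructions differ only in the choice of the target sequence $\tau_n$ and of which finite initial block of generators to include, and the verification of the stated cardinality conditions is then routine.
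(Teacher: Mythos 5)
Your high-level plan (build $K=\bigcup K_n$ by adjoining integers with tightly controlled conjugate moduli, and then establish a lower bound for the house of new elements) matches the paper's strategy, but the sketch has several real gaps, one of which is conceptual.

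\textbf{The main gap: annular confinement of the conjugates is not enough.} You propose to pick $\beta_n$ with ``every conjugate of modulus in $[\tau-\delta,\tau)$'' and then invoke a ``house does not drop in towers'' lemma via a ``Mahler-measure/product-formula estimate.'' But bounding the conjugate moduli in a thin annulus does \emph{not} by itself prevent linear combinations $a_0 + a_1\beta_n + \dots + a_k\beta_n^k$ from having small house. The paper's lower bound (Proposition \ref{prop:housebound}, built on Lemma \ref{lem: ell two lower bound}) crucially requires the normalised conjugates to be \emph{nearly equidistributed on the unit circle}: the proof averages $|\Q(\zeta_d^j)|^2$ over $j$ and uses orthogonality of $\zeta_d^{j(k-l)}$. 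If the conjugates were clustered (e.g.\ all close to $\tau$ on the positive real axis, consistent with your hypothesis), a polynomial in $\beta_n$ with a sign cancellation could well have house far below $\tau$. This is exactly why the paper uses $\xi_i=p_i^{1/d_i}$ and works with the invariant $\eta(K_{i-1},\xi_i)$ that quantifies equidistribution via the discrepancy $D$, rather than just the modulus window. Your proposed lemma, as stated, would be false.

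\textbf{Integral closure is not addressed.} The theorem is about $\Oseen_K$, not about $\IZ[\beta_1,\beta_2,\ldots]$. You never argue that your ring is integrally closed. The paper devotes substantial effort to this: it invokes Dedekind's criterion (Lemma \ref{lem: dedekind's criterion}), Lemma \ref{lem:disc} to compute the discriminant, the Fermat-quotient congruence \eqref{cond:Eisenstein} to force $\Oseen_{\IQ(p_i^{1/d_i})}=\IZ[p_i^{1/d_i}]$, the Siegel--Walfisz theorem (Lemma \ref{lem:SiegelWalfisz}) to produce primes $p_i$ in the right interval and residue class, and Lemma \ref{lem:compositeringofintegers} on composita to conclude $\Oseen_K=\IZ[\xi_1,\xi_2,\ldots]$. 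Without this, your control over $\IZ[\beta_1,\ldots,\beta_n]$ says nothing about the full ring of integers of $K_n$.

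\textbf{Parts (b) and (c) are misdesigned.} For (b) you say to ``switch to $\beta_n$ with conjugates in a window approaching $t$ from below.'' But that produces infinitely many elements of house $<t$, contradicting the requirement $\#\{\alpha:\house{\alpha}\le t\}<\infty$. The paper approaches $t$ from \emph{above} in (b), and then needs a separate, delicate argument (degree $n=1$ forced, a sector argument showing $a_0=0$, etc.) to verify the gap $[t+\epsilon,M)$ is finite. For (c), your choice $\tau_n\downarrow t$ with each target used once gives, for a fixed $M$ and $\epsilon$, only \emph{finitely} many $\beta_n$ with house in $[t+\epsilon,M)$, so the required infinitude fails. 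The paper handles this with a Cantor-style diagonalisation that makes each target $T_i>t$ a limit point of the sequence of houses $(\house{\xi_m})_m$.

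In short: the architectural idea matches the paper, but your key lemma is stated with a hypothesis (modulus confinement only) too weak to be true, the integral-closure step is omitted entirely, and the target sequences for (b) and (c) do not produce the stated gap structures.
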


Since $\housealp\geq 1$ for every non-zero algebraic integer  
there is no ring of algebraic integers $\Oseen$ for which 
$\#\{\alpha\in \Oseen; \housealp< 1\}=\infty$.
But, by our method, it is easy to construct fields $K$ 
whose ring of integers have Northcott number $t=1$, 
and that satisfy either selection of the  remaining two  properties.

The proof of  Theorem \ref{thm: spectrum full for house function} comes in two steps.
First we construct a ring with prescribed Northcott number (and the additional topological features),
and then we prove that the constructed ring is integrally closed (in its field of fractions).
For the latter we  exploit a criterion of Dedekind, demanding our construction to satisfy
certain congruence constraints. The Siegel--Walfisz theorem about the distribution of primes in residue
classes ensures that we can satisfy these congruence conditions.  

The original problems considered by Robinson, and by Vidaux and Videla (restricting to $\Oseen_K^+$ for totally real fields $K$) are more difficult
than those we address in Theorem \ref{thm: spectrum full for house function}. However, it is conceivable that the methods in this paper  are also useful to address these original questions.\\

Our construction of rings with prescribed Northcott number relies on our next result. 
Consider a sequence $(\xi_i)_i$ of  algebraic integers, let $\Oseen_0$ be a ring, containing $1$, of algebraic integers,
$\Oseen_i=\Oseen_0[\xi_1,\ldots,\xi_i],$
and let $\Oseen=\bigcup_{i\geq 1} \Oseen_i=\Oseen_0[\xi_1,\xi_2,\xi_3,\ldots].$
Let $K_i$ be the field of fractions of $\Oseen_i$, and set 
$d_i=[K_{i-1}(\xi_i):K_{i-1}].$
For a subfield $K\subset \Qbar$ and an algebraic number $\xi$ let $M_{\xi,K}\in K[x]$ be the monic minimal polynomial of 
$\xi$ over $K$. We introduce a new quantity $\eta(K,\xi)$ which measures the 
largest root of $\sigma(M_{\xi,K})$ and how equidistributed the normalised roots\footnote{We say $d$ complex points are (perfectly) equidistributed on a circle (of radius $R$) if they are pairwise distinct, all lie on the circle, and the arc-length between neighboring points is $2\pi R/d$. By ``normalised'' we mean scaled by the reciprocal of the largest of their moduli.}  of $\sigma(M_{\xi,K})$ on the unit circle are
for each field homomorphism $\sigma:K\to \IC$. The definition 
of $\eta(K,\xi)$ is given in Section \ref{sec:Galoisorbits}.
We always consider $\liminf$ as element of the extended real number line $\IR\cup\{\pm \infty\}$.

\begin{theorem}\label{thm:Nnumbergeneral}
Suppose that $\N_{\house{\cdot}}(\Oseen_i)=\infty$, $d_i>1$, and that $M_{\xi_i,K_{i-1}}\in \Oseen_{i-1}[x]$ for all  $i\in \IN$.
Then
$$\N_{\house{\cdot}}(\Oseen)\geq \liminf_{i\rightarrow \infty} \eta(K_{i-1},\xi_i).$$
\end{theorem}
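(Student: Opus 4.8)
The plan is to show that for every real $X$ below $L:=\liminf_{i\to\infty}\eta(K_{i-1},\xi_i)$ the set $\{\alpha\in\Oseen:\housealp<X\}$ is finite; taking the supremum over such $X$ then gives $\N_{\house{\cdot}}(\Oseen)\ge L$ (there is nothing to do if $L=-\infty$). So fix $X<L$ and, by the definition of $\liminf$, an index $i_0$ with $\eta(K_{i-1},\xi_i)>X$ for all $i\ge i_0$. The structural heart of the argument is the following \emph{key estimate}: for every $j\ge1$ and every $\alpha\in\Oseen_j\setminus\Oseen_{j-1}$ one has $\housealp\ge\eta(K_{j-1},\xi_j)$. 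Granting it, since $\Oseen=\bigcup_{i\ge0}\Oseen_i$ is an increasing union, any $\alpha\in\Oseen\setminus\Oseen_{i_0-1}$ lies in $\Oseen_j\setminus\Oseen_{j-1}$ for the least $j$ with $\alpha\in\Oseen_j$, and necessarily $j\ge i_0$, so $\housealp\ge\eta(K_{j-1},\xi_j)>X$; hence $\{\alpha\in\Oseen:\housealp<X\}\subseteq\Oseen_{i_0-1}$. Since $\N_{\house{\cdot}}(\Oseen_i)=\infty$ for all $i\in\IN$ and the Northcott property passes to subrings (so also to $\Oseen_0\subseteq\Oseen_1$), the ring $\Oseen_{i_0-1}$ is Northcott and $\{\alpha\in\Oseen_{i_0-1}:\housealp<X\}$ is finite. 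This reduces the theorem to the key estimate.

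To prove the key estimate I would fix $j$ and $\alpha\in\Oseen_j\setminus\Oseen_{j-1}$ and expand $\alpha$ in the power basis. Since $M_{\xi_j,K_{j-1}}\in\Oseen_{j-1}[x]$ is monic of degree $d_j$, the ring $\Oseen_j=\Oseen_{j-1}[\xi_j]$ is a free $\Oseen_{j-1}$-module with basis $1,\xi_j,\dots,\xi_j^{d_j-1}$, so $\alpha=\sum_{\ell=0}^{d_j-1}a_\ell\xi_j^{\ell}$ with uniquely determined $a_\ell\in\Oseen_{j-1}$; as $\alpha\notin\Oseen_{j-1}$, the index $m:=\max\{\ell\ge1:a_\ell\ne0\}$ exists and satisfies $1\le m\le d_j-1$ (here the hypothesis $d_j>1$ is used). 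For each embedding $\sigma\colon K_{j-1}\hookrightarrow\IC$ the polynomial $\sigma(M_{\xi_j,K_{j-1}})$ has $d_j$ distinct roots $\beta_{\sigma,1},\dots,\beta_{\sigma,d_j}$, and the $\IQ$-embeddings of $K_j$ are exactly the maps $\tau$ sending $\xi_j$ to one of these roots for some $\sigma$, whence the numbers $\tau(\alpha)$ are exactly the values $P_\sigma(\beta_{\sigma,k})$ with $P_\sigma(x):=\sum_{\ell=0}^{m}\sigma(a_\ell)x^{\ell}$ of degree exactly $m$ (leading coefficient $\sigma(a_m)\ne0$). Therefore $\housealp=\max_{\sigma,k}|P_\sigma(\beta_{\sigma,k})|$, and in particular
$$\housealp^{[K_{j-1}:\IQ]}\ \ge\ \prod_{\sigma\colon K_{j-1}\hookrightarrow\IC}\ \max_{1\le k\le d_j}\bigl|P_\sigma(\beta_{\sigma,k})\bigr|.$$
It then suffices to bound each factor from below by $|\sigma(a_m)|$ times a quantity depending only on the roots of $\sigma(M_{\xi_j,K_{j-1}})$: upon multiplying over $\sigma$ the coefficient part disappears, since $\prod_\sigma|\sigma(a_m)|=|N_{K_{j-1}/\IQ}(a_m)|\ge1$ because $a_m$ is a nonzero algebraic integer.

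The one genuinely substantial point is this per-embedding bound, and it is where the quantity $\eta$ does its job. Writing $R_\sigma:=\max_k|\beta_{\sigma,k}|$, one wants $\max_k|P_\sigma(\beta_{\sigma,k})|\ge|\sigma(a_m)|\,R_\sigma^{m}\rho_\sigma$, where $\rho_\sigma\in(0,1]$ degrades only according to how far the normalised roots $\beta_{\sigma,k}/R_\sigma$ are from perfect equidistribution on the unit circle. In the perfectly equidistributed case $\beta_{\sigma,k}=R_\sigma\zeta\,e^{2\pi ik/d_j}$ this is immediate: the relations $\sum_{k=1}^{d_j}e^{2\pi i(\ell-\ell')k/d_j}=0$ for distinct $\ell,\ell'\in\{0,\dots,d_j-1\}$ (again using $m\le d_j-1$) give $\tfrac1{d_j}\sum_k|P_\sigma(\beta_{\sigma,k})|^2=\sum_{\ell=0}^{m}|\sigma(a_\ell)|^2R_\sigma^{2\ell}\ge|\sigma(a_m)|^2R_\sigma^{2m}$, so $\rho_\sigma=1$. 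The role of $\eta(K_{j-1},\xi_j)$, defined in Section~\ref{sec:Galoisorbits}, is to make this estimate quantitative and stable and to package the $R_\sigma$'s and the equidistribution defects so that multiplying the per-embedding bounds over all $\sigma$ returns exactly $\housealp\ge\eta(K_{j-1},\xi_j)$. I expect the main obstacle to be precisely this quantitative equidistribution step --- lower-bounding $\max_k|P_\sigma(\beta_{\sigma,k})|$ when the roots only approximately equidistribute and need not lie on a single circle, uniformly in the auxiliary degree $m$ and in a way that survives the product over $\sigma$. The rest --- the reduction via the increasing union, the power-basis expansion, and the norm bound $|N_{K_{j-1}/\IQ}(a_m)|\ge1$ --- is bookkeeping.
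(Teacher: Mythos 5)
Your overall strategy matches the paper's: the reduction of the theorem to the per-step lower bound $\housealp\geq\eta(K_{j-1},\xi_j)$ for $\alpha\in\Oseen_j\setminus\Oseen_{j-1}$ is precisely what the paper isolates as Lemma~\ref{LemmaN} combined with Proposition~\ref{prop:housebound}, and your power-basis expansion, the identification $m=\deg P\in\{1,\dots,d_j-1\}$, and the description of $\Hom(K_j)$ in terms of $\Hom(K_{j-1})$ and the roots $\beta_{\sigma,k}$ are all correct and essentially identical to the paper's.

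The gap is in your proof of the key estimate. You multiply over $\sigma\in\Hom(K_{j-1})$ and then use $\prod_\sigma|\sigma(a_m)|=|N_{K_{j-1}/\IQ}(a_m)|\geq 1$, which requires $[K_{j-1}:\IQ]<\infty$. The theorem does not assume this: $\Oseen_0$ is allowed to be an arbitrary ring of algebraic integers with the Northcott property, and the paper explicitly stresses (after Corollary~\ref{example:largerings1}) that $\Oseen_0$ may be an infinite extension of $\IZ$, such as the ring of integers of the compositum of all real quadratic fields. For such $\Oseen_0$ the set $\Hom(K_{j-1})$ is infinite, and both the exponent $[K_{j-1}:\IQ]$ and the field norm $N_{K_{j-1}/\IQ}$ are meaningless, so the inequality $\housealp^{[K_{j-1}:\IQ]}\geq\prod_\sigma\max_k|P_\sigma(\beta_{\sigma,k})|$ never gets off the ground. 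The paper's Proposition~\ref{prop:housebound} avoids this entirely by taking a \emph{maximum} rather than a product over $\sigma$: since $a_m$ is a nonzero algebraic integer, $\house{a_m}\geq 1$, so there exists a single $\sigma\in\Hom(K_{j-1})$ with $|\sigma(a_m)|\geq 1$, and that one embedding already gives the bound, regardless of the degree of $K_{j-1}$. Your product argument can be repaired --- e.g.\ replace $\Hom(K_{j-1})$ by $\Hom(L)$ for $L$ the number field generated by $a_0,\dots,a_m$ and the coefficients of $M_{\xi_j,K_{j-1}}$, over which the norm makes sense and whose embeddings still determine $\vtau_\sigma(\xi_j)$ --- but as written it proves only the case of finite-degree $\Oseen_0$. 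Finally, you rightly flag the quantitative near-equidistribution bound (the paper's Corollary~\ref{cor: ell two lower bound}, packaged into the definition of $\eta$ via the discrepancy $D$) as the substantive analytic input and leave it unproved; that is a reasonable division of labour, though you should note that the factor $\min\{\|\vtau_\sigma(\xi)\|,\|\vtau_\sigma(\xi)\|^{d-1}\}$ in $\eta$ is there precisely to absorb the unknown exponent $m$, which your $R_\sigma^m$ would otherwise carry along.
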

Since $d_i>1$ the $\xi_i$ are pairwise distinct, and thus we also have the trivial upper bound
$$\N_{\house{\cdot}}(\Oseen)\leq \liminf_{i \rightarrow \infty} \house{\xi_i}.$$

The simplest application of  Theorem \ref{thm:Nnumbergeneral} is when  $M_{\xi_i,K_{i-1}}=M_{\xi_i,\IQ}$
and the conjugates over $\IQ$ are perfectly equidistributed on a circle $|z|=t_i$,
e.g., if they are of the form $\xi_j^{(i)}=t_i\zeta_{d_i}^j$ $(1\leq j\leq d_i)$.  In Section \ref{sec:applications} we explain this and other  applications,
including a more  sophisticated result (Corollary \ref{cor:Nnumber}), that requires the full strength of  Theorem \ref{thm:Nnumbergeneral}.\\

The Northcott number $\N_{\house{\cdot}}(\Oseen_K)$ is also related to the invariant $c_1(K)$ 
for fields $K\subset \Qbar$ introduced by Gaudron and R\'emond in their investigations of the Siegel property for fields.
This invariant is often difficult to determine; however, 
they show \cite[Lemme 5.4]{GaudronRemondSiegel} that
 $c_1(K)\geq \N_{\house{\cdot}}(\Oseen_K)$ provided $K$ has infinite degree over $\IQ$.
 
They also provide an example \cite[Exemple 4.6]{GaudronRemondSiegel} of a field $K$ 
that has infinitely many elements of bounded Weil height but whose ring of integers has only finitely many elements of bounded house, i.e., 
$\N_h(K)<\infty =\N_{\house{\cdot}}(\Oseen_K)$.
Their proof of $\N_{\house{\cdot}}(\Oseen_K)=\infty$  relies on the (perfect) orthogonality relations of the roots of unity, and could be adapted to 
handle the aforementioned simplest  case $\xij=t_i\zeta_{d_i}^j$.
Their method, has the advantage that it can deal with integral elements in  $\IQ[\xi_1,\xi_2,\xi_3,\ldots]$ but, in contrast to ours, it seems restricted to the
perfectly equidistributed case, and cannot provide results such as  Corollary \ref{cor:Nnumber} of Section \ref{sec:applications}.\\

The next height function we consider is the classical logarithmic absolute Weil height $h(\cdot)$.   
Again, we first give some background, and then we state our result.

The origin of the Northcott property 
goes back to two seminal papers of D.G. Northcott \cite{Northcott49, Northcott50} from  1949 and 1950, in which he showed  that there are only finitely many algebraic 
numbers of bounded degree and bounded Weil height $h(\cdot)$, and proved the finiteness of the number of preperiodic points of bounded degree 
under non-linear algebraic endomorphisms  of projective varieties defined over $\Qbar$.  

The Northcott property (with respect to $h(\cdot)$) is well known to have many diophantine applications, and thus it is natural to refine this concept via  the Northcott number
as done by Vidaux and Videla \cite{ViVi16}.
Indeed, it is often enough to know that the Northott number of a specified set is a sufficiently large finite number.
For instance, to show that the non-linear polynomial $f\in K[x]$ has only finitely many preperiodic points in the field $K\subset \Qbar$ it suffices to know that $\N_h(K)>2c_f$ where $h(f(\alpha))\geq  \deg f\cdot h(\alpha)-c_f$. 
Even more concretely, for  the polynomial $f_n=x^{2n}-x^{2n-1}+\cdots -x +1$ one can take\footnote{Note that with $g_1=y^n$ and $g_2=y^nf_n(x/y)$ we have $x^{n+1}=-yg_1+(x+y)g_2$ and $y^{n+1}=yg_1$. From this it is routine to compute $c_{f_n}$.}  $c_{f_n}=2\log 2$.

On the  opposite end, the first and the last author  \cite{PazukiWidmer1} have recently proved an arithmetic Bertini-type result for which 
fields with prescribed arithmetic features and sufficiently small Northcott number are needed.

These observations  raise the question which numbers can be realised as the Northcott number of a field or a ring of integers of a subfield of $\Qbar$.
A similar question was  raised by Vidaux and Videla \cite[Question 6]{ViVi16}. 
\begin{question}[Vidaux, Videla 2016]\label{QuestionViVi}
Which real numbers can be realised as Northcott number (with respect to  the absolute logarithmic Weil height) of a ring extension of $\IQ$?
\end{question}
Interestingly, Vidaux and Videla's motivation for the above Question \ref{QuestionViVi} comes from their earlier question about the spectrum of the 
Julia Robinson numbers (i.e., the spectrum of the Northcott numbers $\N_{\house{\cdot}}(\Oseen^+_K)$  for totally real fields $K$), and the fact that $\housealp\geq h(\alpha)$ for every non-zero algebraic integer.  Given their motivation it seems to us equally natural to propose the analogous question for the house $\house{\cdot}$ --- a question  that is completely answered by Theorem \ref{thm: spectrum full for house function}.

However, back to the Weil height $h(\cdot)$. To the best of our knowledge,
 there are currently only two possible ``values'' known as Northcott numbers for subrings of 
$\Qbar$, namely $0$ (attained, e.g., by $\Qbar$) and $\infty$
(attained, e.g., by any number field).  
Here we  show that the set of values cannot be sparse. 
\begin{theorem}\label{Prop2}
Let $t\geq 0$. There exists a field $L\subset \Qbar$ satisfying 
$$
t\leq \N_{h}(L)\leq \N_{h}(\Oseen_L)\leq 2t.$$
More precisely,
every field $L$  generated over  a number field $K$ 
by any sequence of roots $p_i^{1/d_i}$ that converge to $\exp(2t)$, and 
where $p_i$ and $d_i$ are primes and the $p_i$ are strictly increasing, satisfies the conclusion.
\end{theorem}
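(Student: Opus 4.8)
The plan is to prove the two ``soft'' inequalities and then concentrate on the lower bound $\N_h(L)\ge t$, which is the heart of the matter; I may assume $t>0$, since $t=0$ is immediate from the upper bound. Write $K_0=K$ and $K_i=K(\xi_1,\dots,\xi_i)$ with $\xi_i=p_i^{1/d_i}$, so that $L=\bigcup_i K_i$. As $\Oseen_L\subseteq L$ and $S\mapsto\N_f(S)$ is non-increasing under inclusion, $\N_h(L)\le\N_h(\Oseen_L)$. For $\N_h(\Oseen_L)\le 2t$ observe that $\xi_i$ is a zero of the monic polynomial $x^{d_i}-p_i\in\IZ[x]$, hence $\xi_i\in\Oseen_L$, and that its $\IQ$-conjugates $p_i^{1/d_i}\zeta_{d_i}^{\,j}$ $(0\le j<d_i)$ all have modulus $p_i^{1/d_i}>1$; so the Mahler measure of $x^{d_i}-p_i$ is $p_i$ and $h(\xi_i)=(\log p_i)/d_i\to 2t$. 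The $\xi_i$ are pairwise distinct (else $p_i^{d_j}=p_j^{d_i}$, impossible for distinct primes), so for every $X>2t$ the set $\{\alpha\in\Oseen_L:h(\alpha)<X\}$ contains all but finitely many $\xi_i$, giving $\N_h(\Oseen_L)\le X$ and hence $\N_h(\Oseen_L)\le 2t$.

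For the lower bound, suppose $\#\{\gamma\in L:h(\gamma)<t-\delta\}=\infty$ for some $\delta>0$. If all these $\gamma$ lay in a single $K_N$ we would contradict the Northcott property of the number field $K_N$; hence there are pairwise distinct $\gamma_k\in L$ with $h(\gamma_k)<t-\delta$ and $n_k:=\min\{n:\gamma_k\in K_n\}\to\infty$, so $\gamma_k\in K_{n_k}\setminus K_{n_k-1}$. Since the $p_i$ are strictly increasing while only finitely many rational primes ramify in a fixed number field and $d_j\ll p_j$ for each $j$, for $k$ large the prime $p_{n_k}$ is unramified in $K_{n_k-1}$ and $p_{n_k}\ne d_{n_k}$; then $p_{n_k}$ is not a $d_{n_k}$-th power in $K_{n_k-1}$ (it has valuation $1$ at each place above it), so $x^{d_{n_k}}-p_{n_k}$ is irreducible over $K_{n_k-1}$ and $[K_{n_k}:K_{n_k-1}]=d_{n_k}$; as $d_{n_k}$ is prime, $K_{n_k}=K_{n_k-1}(\gamma_k)$.

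Fix such a large $k$ and abbreviate $n=n_k$, $d=d_n$, $p=p_n$, $F=K_{n-1}$, $\gamma=\gamma_k$; let $g\in F[x]$ be the monic minimal polynomial of $\gamma$ over $F$ (degree $d$, roots $\gamma^{(1)},\dots,\gamma^{(d)}$) and let $\Delta=\mathrm{disc}(g)=\pm\prod_{i<j}(\gamma^{(i)}-\gamma^{(j)})^2\in F^\times$. Since $p$ is unramified in $F$ and tamely, totally ramified of degree $d$ in $K_n/F$ at every place above it, the relative different of $K_n/F$ equals $\prod_{\mathfrak P\mid p}\mathfrak P^{\,d-1}$; so, assuming for now that $\gamma$ is integral above $p$, one gets $v_w(\Delta)\ge d-1$ at every place $w\mid p$ of $F$, and summing over those places (using $\sum_{w\mid p}f(w\mid p)=[F:\IQ]$, as $p$ is unramified in $F$) yields $h(\Delta)\ge(d-1)\log p$. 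For the reverse inequality I would apply Mahler's discriminant inequality $|\mathrm{disc}(g)|_v\le d^{\,d}\prod_i\max(1,|\gamma^{(i)}|_v)^{2d-2}$ at the archimedean places of $F$ and the ultrametric bound $|\mathrm{disc}(g)|_v\le\prod_i\max(1,|\gamma^{(i)}|_v)^{2d-2}$ at the finite ones; summing over all places and using the standard identity $\sum_i h(\gamma^{(i)})=d\,h(\gamma)$ (the $\gamma^{(i)}$ being $\IQ$-conjugates of $\gamma$) gives $h(\Delta)\le d\log d+(2d-2)\,d\,h(\gamma)$. Combining,
$$h(\gamma)\ \ge\ \frac{(d-1)\log p-d\log d}{2d(d-1)}\ =\ \frac{\log p_{n_k}}{2d_{n_k}}-\frac{\log d_{n_k}}{2(d_{n_k}-1)}.$$
As $p_i^{1/d_i}\to e^{2t}$ gives $(\log p_{n_k})/2d_{n_k}\to t$ and $d_{n_k}\to\infty$ gives $(\log d_{n_k})/(d_{n_k}-1)\to0$, the right-hand side tends to $t$; hence $h(\gamma_k)\ge t-\delta$ for all large $k$, contradicting $h(\gamma_k)<t-\delta$ and proving $\N_h(L)\ge t$.

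Two points need care, and the second is, I expect, the genuine obstacle. First, the \emph{clean} constant: bounding each factor of $\Delta$ by $h(\gamma^{(i)}-\gamma^{(j)})\le 2h(\gamma)+\log2$ only yields $h(\gamma)\ge(\log p_n)/2d_n-\tfrac12\log2$, which loses an additive constant and misses $\N_h(L)\ge t$; Mahler's discriminant inequality is exactly what trades the harmful term $d(d-1)\log2$ for the harmless $d\log d$, and it is this inequality (reflecting $\mathrm{disc}\sim(\text{Mahler measure})^2$) that is the source of the factor $\tfrac12$, hence of the gap between $t$ and $2t$ in the statement. Second, the bound $v_w(\Delta)\ge d-1$ presupposes that $\gamma$ is integral above $p_n$; for general $\gamma\in L$ one first replaces $\gamma$ by $\gamma^{-1}$ (which generates $K_n/K_{n-1}$ with the same height) whenever $\gamma$ has no genuine zero above $p_n$, so that $\gamma^{-1}$ is $p_n$-integral, and the only remaining situation — $\gamma$ having both a zero and a pole above $p_n$ — must be handled by a separate valuation-theoretic argument. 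This last bookkeeping, rather than the discriminant estimate itself, is what I anticipate being the most delicate part of a complete write-up.
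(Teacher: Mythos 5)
Your proof is structurally the paper's own: the upper bound comes from the integral generators $\xi_i=p_i^{1/d_i}$, and the lower bound from showing that every $\gamma\in K_{n}\setminus K_{n-1}$ satisfies $h(\gamma)\ge \frac{\log p_n}{2d_n}-\frac{\log d_n}{2(d_n-1)}$ and letting $n\to\infty$ (this is the combination of Lemma \ref{LemmaN} with Lemma \ref{Lemdelta} for $\gamma=0$). The only divergence is in how that height lower bound is obtained: the paper quotes Silverman's discriminant--height inequality \cite[Theorem 2]{Silverman} together with the divisibility $p_n^{[K_{n-1}:\IQ](d_n-1)}\mid N_{K_{n-1}/\IQ}(D_{K_n/K_{n-1}})$, whereas you re-derive Silverman's inequality in this special case via $\mathrm{disc}(g)$. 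Your two-sided estimate reproduces the paper's constant exactly, and your diagnosis of where the factor $\tfrac12$ (hence the gap between $t$ and $2t$) originates is correct.

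The integrality issue you flag is a real gap in your write-up --- it is precisely the part of the argument that the citation to Silverman absorbs --- but it does close, and in your setting rather cleanly. The estimate you actually need at a place $w\mid p_n$ of $F=K_{n-1}$ is not $v_w(\mathrm{disc}(g))\ge d-1$ itself but the corresponding bound for the normalised local discriminant $N_w(\gamma)=\prod_{i<j}|\gamma^{(i)}-\gamma^{(j)}|_w^2\,\prod_i\max\{1,|\gamma^{(i)}|_w\}^{-(2d-2)}$, namely $N_w(\gamma)\le |p_n|_w^{\,d-1}$; combining this with $N_v\le 1$ at the remaining finite places, Mahler's bound $N_v\le d^{d}$ at the archimedean ones, and the product formula gives your displayed inequality directly (this packaging replaces your separate upper and lower bounds on $h(\Delta)$ and needs no global integrality hypothesis). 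Now $N_w$ is invariant under $\gamma\mapsto\gamma^{-1}$, and since $x^{d_n}-p_n$ is Eisenstein over $F_w$ there is a unique, totally ramified place of $K_n$ above $w$, so all $d$ roots of $g$ have the same $w$-adic size: at each individual $w\mid p_n$ either $\gamma$ or $\gamma^{-1}$ is integral, and the ``mixed'' configuration you worry about only occurs across different places $w$, which is harmless because the required estimate is purely local. So you may invert at each bad $w$ separately and then run your different computation there. With that local lemma supplied --- or with Silverman's theorem cited as a black box, as the paper does --- your argument is complete.
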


For the aforementioned example $f_n=x^{2n}-x^{2n-1}+\cdots -x +1\in \IZ[x]$,
we conclude from Theorem \ref{Prop2} that $f_n$ has only finitely many preperiodic points in  $L$ (with a bound independent of $n$), provided $t>4\log 2$.

Finally, let us mention that Gaudron and R\'emond's  Siegel property \cite{GaudronRemondSiegel} is also related to $\N_{h}(K)$.
For instance, they show \cite[Corollaire 1.2]{GaudronRemondSiegel} that  if $K$ is a Siegel field of  infinite degree over $\IQ$ then $\N_{h}(K)<\infty$.
\\

Our last result is concerned with Northcott numbers for differently normalised Weil heights. Many results around the Lehmer conjecture can be expressed 
in terms of the Northcott number of a suitably normalised Weil height. For example, 
writing $\mu\subset \Qbar$ for the set of roots of unity and $\deg \alpha=[\IQ(\alpha):\IQ]$, Dobrowolski's Theorem states that  $\N_f(\Qbar\backslash\mu)>0$ for\footnote{For $x\in \IR$ we write $\log^+x=\log\max\{x,\exp(1)\}$.} $f(\alpha)=\left(\frac{\log^+ \deg \alpha}{\log^+\log \deg \alpha}\right)^3(\deg \alpha)h(\alpha)$. Let us now restrict ourselves to the case where  
$$f(\alpha)=h_\gamma(\alpha)=(\deg \alpha)^\gamma h(\alpha)\quad \text{ 
for some } \gamma\in \IR.$$

Lehmer's conjecture itself states that $\N_{h_1}(\Qbar\backslash\mu)>0$. And the Bogomolov property for a set $S\subset \Qbar$, also introduced by Bombieri and Zannier \cite{BoZa}, can be rephrased as 
$\N_{h_0}(S\backslash\mu)>0$. In analogy the first author and Pengo \cite{PazukiPengo20}
say the set $S$ has the Lehmer property if  
$\N_{h_1}(S\backslash\mu)>0$. Generalising both properties we  say a set $S\subset \Qbar$ is  $\gamma$-Bogomolov
if $\N_{h_\gamma}(S\backslash\mu)>0$, and we say $S$ is  $\gamma$-Northcott
if $\N_{h_\gamma}(S)=\infty$ (i.e., $S$ has the Northcott property with respect to $h_{\gamma}(\cdot)$).
Note that by Dobrowolski's Theorem the field $\Qbar$ (and hence each of its subsets) is $\gamma$-Bogomolov  for every $\gamma>1$.

Amoroso's Theorem 1.3 in \cite{Amoroso2016} shows that the field $\mathbb{Q}(\zeta_3, 2^{1/3}, \zeta_{3^2}, 2^{1/{3^2}}, \zeta_{3^3}, 2^{1/{3^3}}, \ldots)$, where $\zeta_d$ denotes a primitive $d$-th root of unity, is  $1$-Bogomolov but not $0$-Bogomolov. 
Another example, as we explain now, of such a field
is $\IQ^{tr}(\sqrt{-1})$, where $\IQ^{tr}$ 
denotes the maximal totally real extension of $\mathbb{Q}$. 
By a result of Schinzel \cite[Theorem 2]{Schinzel1973} we have $(\deg\alpha) h(\alpha)\geq \log(\frac{1+\sqrt{17}}{4})/2$ for every unit $\alpha$ in the ring of integers of $\IQ^{tr}(\sqrt{-1})$.
This implies that $\IQ^{tr}(\sqrt{-1})$ is $1$-Bogomolov.
But from Example 5.3 in \cite{AmorosoDavidZannier}, the sequence 
$$
\bigg(\Big(\frac{2+\sqrt{-1}}{2-\sqrt{-1}}\Big)^{1/k}\bigg)_{k\geq1}
$$ has all its elements in $\IQ^{tr}(\sqrt{-1})$ which shows that this field is not $0$-Bogomolov.

This raises the question whether for every $\gamma\leq 1$ and $\epsilon>0$ there exists a field $K$ that is $\gamma$-Bogomolov (or even $\gamma$-Northcott)  but not 
$(\gamma-\epsilon)$-Bogomolov.
Our next result answers this question in the affirmative.
\begin{theorem}\label{PropNBGexplicit}
Let $0\leq \gamma\leq 1$, and $\epsilon>0$. Choose sequences of primes
 $(d_i)_{i\in \IN}$ and $(p_i)_{i\in \IN}$ such that $d_{i+1}\geq 2d_{i}$, and 
$d_i^{1-\gamma+\epsilon/2}\leq \log p_i\leq \log (2)+ d_i^{1-\gamma+\epsilon/2}$ for all $i\in \IN$.
Then $\IQ(p_i^{1/d_i}; i\in \IN)$ is  $\gamma$-Northcott but not $(\gamma-\epsilon)$-Bogomolov. 
\end{theorem}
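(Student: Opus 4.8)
The plan is to analyse the field $K=\IQ(p_i^{1/d_i}; i\in\IN)$ using the standard structure of radical extensions together with basic valuation theory to pin down degrees and heights. First I would establish that $[\IQ(p_1^{1/d_1},\dots,p_n^{1/d_n}):\IQ]=\prod_{i=1}^n d_i$ for every $n$: since the $p_i$ are distinct primes and the $d_i$ are primes, the extension $\IQ(p_1^{1/d_1},\dots,p_n^{1/d_n})/\IQ$ is generated by radicals whose degrees are prime, and one checks multiplicativity of degrees by a Kummer-type/ramification argument — e.g., the prime $p_n$ ramifies totally (index $d_n$) in $\IQ(p_n^{1/d_n})$ but is unramified at primes above it in $\IQ(p_1^{1/d_1},\dots,p_{n-1}^{1/d_{n-1}})$ because that field is unramified at $p_n$. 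Consequently $[\IQ(p_i^{1/d_i}):\IQ]=d_i$ and these local degrees are ``independent''. I would also record that $h(p_i^{1/d_i})=\tfrac{\log p_i}{d_i}$, which is immediate from the product formula since $p_i^{1/d_i}$ has a single nonarchimedean place contributing and the archimedean contribution $\tfrac{1}{d_i}\log p_i$.

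Next, the \emph{not $(\gamma-\epsilon)$-Bogomolov} part. The natural witnesses are the elements $\alpha_i=p_i^{1/d_i}$ (or their high powers if one prefers), which are not roots of unity. We have $\deg\alpha_i=d_i$ and $h(\alpha_i)=\tfrac{\log p_i}{d_i}$, hence
$$h_{\gamma-\epsilon}(\alpha_i)=d_i^{\gamma-\epsilon}\cdot\frac{\log p_i}{d_i}=d_i^{\gamma-\epsilon-1}\log p_i\leq d_i^{\gamma-\epsilon-1}\bigl(\log 2+d_i^{1-\gamma+\epsilon/2}\bigr)=d_i^{\gamma-\epsilon-1}\log 2+d_i^{-\epsilon/2}\xrightarrow[i\to\infty]{}0,$$
using $\gamma-\epsilon-1<0$ and $-\epsilon/2<0$. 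Since the $\alpha_i$ are pairwise distinct (distinct $p_i$), the set $\{\alpha_i\}\subset K\setminus\mu$ has infinitely many elements with $h_{\gamma-\epsilon}$ tending to $0$, so $\N_{h_{\gamma-\epsilon}}(K\setminus\mu)=0$ and $K$ is not $(\gamma-\epsilon)$-Bogomolov.

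The substantial part is \emph{$\gamma$-Northcott}, i.e.\ $\N_{h_\gamma}(K)=\infty$: every $\beta\in K$ with $h_\gamma(\beta)<X$ must lie in a set that grows with $X$ but is finite for each fixed $X$ — equivalently, for any $X$ there are only finitely many $\beta\in K$ with $(\deg\beta)^\gamma h(\beta)<X$. Fix $\beta\in K$; then $\beta\in K_n:=\IQ(p_1^{1/d_1},\dots,p_n^{1/d_n})$ for some minimal $n$. The key local input: because $p_n$ is totally ramified of degree $d_n$ in the subextension $\IQ(p_n^{1/d_n})$ and this ramification is ``new'' relative to $K_{n-1}$, the field $K_n$ has a place $w\mid p_n$ whose ramification index over $\IQ$ is divisible by $d_n$; if $\beta\notin K_{n-1}$ then $\beta$ genuinely involves $p_n^{1/d_n}$ and the valuation $w(\beta)$ is a nonzero element of $\tfrac{1}{d_n}\IZ$ not in $\IZ$ (after a Kummer normalisation), forcing $\deg\beta\geq d_n$ and, crucially, $h(\beta)\geq \tfrac{1}{[\IQ(\beta):\IQ]}\cdot\tfrac{\log p_n}{d_n}\cdot(\text{something}\geq 1)$ — more precisely $h(\beta)\geq \tfrac{\log p_n}{d_n\,[\IQ(\beta):\IQ]/d_n}=\dfrac{\log p_n}{[\IQ(\beta):\IQ]}$ after accounting for the number of places above $p_n$. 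Combining with $\deg\beta\geq d_n$:
$$h_\gamma(\beta)=(\deg\beta)^\gamma h(\beta)\geq (\deg\beta)^{\gamma-1}\log p_n\geq d_n^{\gamma-1}\cdot d_n^{1-\gamma+\epsilon/2}=d_n^{\epsilon/2}\xrightarrow[n\to\infty]{}\infty,$$
using $\gamma-1\le 0$ together with $\deg\beta\ge d_n$ (here one must be slightly careful with the sign of $\gamma-1$; when $\gamma=1$ the bound reads $h(\beta)\ge d_n^{-1}\log p_n\ge d_n^{\epsilon/2}$ directly, and for $\gamma<1$ one uses $(\deg\beta)^{\gamma-1}\le d_n^{\gamma-1}$). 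Hence elements of $K\setminus K_{n-1}$ have $h_\gamma$-value $\geq d_n^{\epsilon/2}$, which is eventually $>X$; so $\{\beta\in K: h_\gamma(\beta)<X\}\subseteq K_{n_0-1}$ for some $n_0=n_0(X)$, and $K_{n_0-1}$ is a number field, on which the classical Northcott property gives finiteness. Therefore $\N_{h_\gamma}(K)=\infty$.

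The main obstacle is making the local/valuation argument in the $\gamma$-Northcott part fully rigorous: one needs a clean lower bound for $h(\beta)$ in terms of $p_n$ and $[\IQ(\beta):\IQ]$ whenever $\beta\in K_n\setminus K_{n-1}$, controlling simultaneously (i) that $\beta$ really ``sees'' the totally ramified prime $p_n$ with a non-integral valuation, (ii) the number of places of $\IQ(\beta)$ above $p_n$, which could a priori dilute the archimedean-style contribution, and (iii) the compatibility of these local indices across the tower (the ramification of $p_n$ in $K_n$ is exactly $d_n$ and is disjoint from the ramification coming from $p_1,\dots,p_{n-1}$). I expect (iii) is handled by the degree-multiplicativity already established, and (ii) by noting $\sum_{w\mid p_n}[\IQ(\beta)_w:\IQ_{p_n}]=[\IQ(\beta):\IQ]$ so the contribution of all places above $p_n$ to $h(\beta)$ is at least $\tfrac{1}{d_n}\log p_n\cdot\tfrac{(\#\{w\mid p_n\})}{[\IQ(\beta):\IQ]}\cdot e_w\ge \tfrac{\log p_n}{[\IQ(\beta):\IQ]}$ once some $w$ has $e_w\ge d_n$; assembling these carefully is the crux.
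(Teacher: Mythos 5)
Your treatment of the \emph{not $(\gamma-\epsilon)$-Bogomolov} part is exactly the paper's: evaluate $h_{\gamma-\epsilon}$ at the witnesses $p_i^{1/d_i}$ and watch it tend to $0$. That half is fine. The $\gamma$-Northcott half, however, has a genuine gap, and it is precisely the point you flagged as ``the crux.''

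The trouble is the claimed lower bound $h(\beta)\gtrsim\frac{\log p_n}{\deg\beta}$ for $\beta\in K_n\setminus K_{n-1}$, derived by saying that $\beta$ must have a non-integral valuation at a place above $p_n$. This is simply false for many such $\beta$: take $\beta=1+p_n^{1/d_n}$, or any unit of $\Oseen_{K_n}$ not lying in $K_{n-1}$ — these generate $K_n$ over $K_{n-1}$ yet have valuation $0$ at every place above $p_n$, so no nonarchimedean place ``sees'' the ramification of $p_n$ in the way your argument requires. Moreover, since algebraic integers have nonnegative valuations everywhere, their height is entirely archimedean; a non-integral $p_n$-adic valuation, even when present, does not by itself feed into $h(\beta)$. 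One needs a Lehmer-type inequality that turns the \emph{discriminant} of $K_n/K_{n-1}$ (which is controlled by the total ramification of $p_n$, regardless of $\beta$) into a lower bound for the height of \emph{any} generator $\beta$; that is exactly Silverman's inequality $h(\alpha)\geq\frac{\log N_{F/\IQ}(D_{F(\alpha)/F})}{2[F:\IQ]d(d-1)}-\frac{\log d}{2(d-1)}$, which the paper invokes in its Lemma on $\delta_{h_\gamma}(K_i\setminus K_{i-1})$. That bound gives $h(\beta)\geq\frac{\log p_n}{2d_n}-\frac{\log d_n}{2(d_n-1)}$, a quantity independent of $\deg\beta$, and that independence is essential.

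This matters also for your final chain of inequalities. You write $h_\gamma(\beta)\geq(\deg\beta)^{\gamma-1}\log p_n\geq d_n^{\gamma-1}\log p_n$, but for $\gamma<1$ and $\deg\beta\geq d_n$ the map $x\mapsto x^{\gamma-1}$ is decreasing, so $(\deg\beta)^{\gamma-1}\leq d_n^{\gamma-1}$ — the inequality points the wrong way, and your parenthetical ``one uses $(\deg\beta)^{\gamma-1}\le d_n^{\gamma-1}$'' does not repair it. The correct route is to lower-bound $(\deg\beta)^\gamma$ by $d_n^\gamma$ (using $\gamma\geq 0$) and, separately, to lower-bound $h(\beta)$ by a quantity of size $\approx\frac{\log p_n}{2d_n}$ via the discriminant bound, then multiply; this yields $h_\gamma(\beta)\gtrsim d_n^{\gamma-1}\log p_n\geq d_n^{\epsilon/2}$, as desired. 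So the structure of the argument is right — show $\delta_{h_\gamma}(K_n\setminus K_{n-1})\to\infty$ and invoke Northcott on each $K_n$ — but you need Silverman's inequality (or an equivalent discriminant-to-height bound) in place of the valuation heuristic.
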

While the proofs of Theorem \ref{Prop2} and of Theorem \ref{PropNBGexplicit} rely on a method from \cite{WidmerPropN}, 
the proof of Theorem \ref{thm: spectrum full for house function} is essentially different and is based on an equidistribution argument.
However, it turns out that both methods are particularly easy to apply for fields of the shape $\IQ(p_i^{1/d_i}; i\in \IN)$
for certain primes $p_i$ and $d_i$, and this is the reason that all fields constructed in these three theorems are of this type.

\section*{acknowledgements}
We thank Lukas Pottmeyer for pointing out Theorem 1.3 in \cite{Amoroso2016} 
and  $\IQ^{tr}(\sqrt{-1})$
as examples of fields that are $1$-Bogomolov but not $0$-Bogomolov. We also thank
Xavier Vidaux and Carlos Videla for useful feedback on an earlier version,
and for alerting us to the work of M. Castillo.
Finally, we thank the anonymous referee for carefully reading our manuscript, and for various helpful suggestions that improved the exposition of our paper.
NT is supported by the Austrian Science Fund (FWF): project J 4464-N. 
FP is supported by ANR-17-CE40-0012 Flair and ANR-20-CE40-0003 Jinvariant.

\section{Definitions and basic properties of Northcott numbers}\label{sec:geometriclowerbound}

In this section, we introduce some notation, and collect some basic results about Northcott numbers.
 
We write $|\cdot|$ for the usual absolute value on $\IC$, and for the maximum norm on $\IC^d$ we set
$$\|\valpha\|=\max_{1\leq i\leq d} |\alpha_{i}|.$$ 
For a field $K$
of characteristic $0$ we denote the set of field homomorphisms $\sigma:K\to \IC$ by
$$\mathrm{Hom}(K)=\{\sigma:K\to \IC; \text{ field homomorphism}\}.$$
The house of an algebraic number $\alpha$, written $\housealp$, is defined as 
$$\housealp=\max_{\sigma \in \Hom(\Qbar)}|\sigma(\alpha)|.$$

Next we define the Weil height.
To this end let $K$ be a number field, and let $M_K$ denote the set of places of $K$,
that is, equivalence classes of absolute values.
For each place $v\in M_K$ we let $|\cdot |_v$ be the unique representative of $v$ 
that extends one of the canonical absolute values\footnote{I.e., for $x\in \mathbb{Q}$, we have 
$\vert x\vert_v=\max\{-x,x\}$ if $v$
is archimedean, and otherwise $\vert p\vert_v=p^{-1}$ 
if $v$ lies above the prime $p$.} on $\IQ$.
For $v\in M_K$, let $K_v$ abbreviate the completion of $K$ with respect to $|\cdot|_v$.
The (logarithmic) Weil height of $\alpha\in K$ is given by
$$
h(\alpha) = \frac{1}{[K:\mathbb{Q}]} 
\sum_{v\in M_K} [K_v : \mathbb{Q}_v] \log\max\{1, \vert x \vert_v\}.
$$
The product formula implies that $h(\alpha)$ 
does not depend on the ambient field $K$, and hence $h(\cdot)$ extends to a function on $\Qbar$. Further,  
we have $ h(\sigma (\alpha)) = h(\alpha)$ for any field homomorphism $\sigma: \overline{\IQ} \rightarrow \Qbar$,
and $h(\alpha+\beta)\leq h(\alpha)+h(\beta)+\log 2$, and $h(\alpha\beta)  \leq h(\alpha)+ h(\beta)$ for all
$\alpha,\beta \in \Qbar$.
More generally, suppose we have a function $f:\Qbar\to [0,\infty)$ and that there exists a continuous function $F:\IR^2\to [0,\infty)$
such that for any field homomorphism $\sigma: \overline{\IQ} \rightarrow \Qbar$, 
and all algebraic numbers $\alpha,\beta$
the following is true
\begin{align*}
&(f1)  & f(\sigma (\alpha)) &= f(\alpha),\\ \,\,
&(f2)  & f(\alpha+\beta)&\leq F(f(\alpha),f(\beta)),\\ \,\,
&(f3) & f(\alpha\beta) & \leq F(f(\alpha),f(\beta)).\,\,
\end{align*}
With $F(x,y)=\max\{xy,x+y+\log 2\}$ the properties (f1), (f2), and (f3) are satisfied for
the Weil height $h(\cdot)$ and for the house $\house{\cdot}$. 
Furthermore, note that for each non-zero algebraic integer $\alpha$ we have 
\begin{alignat*}1
h(\alpha)\leq \log \housealp.
\end{alignat*}
Dvornicich and Zannier observed that the proof of Northcott's Theorem yields a more general statement, which we 
state here in an even slightly more general form.
\begin{lemma}[Dvornicich and Zannier {\cite[Thm. 2.1]{DvZa}}]\label{lem:generalNorthcott}
Suppose $f$ from Definition \ref{Northcottnum} satisfies $(f1)$, $(f2)$, and $(f3)$.
Let $K$ be a subfield of $\Qbar$,
and $U\subset K$. Let $S\subset \Qbar$ 
be a set of roots of monic irreducible polynomials in $K[x]$
with coefficients in $U$ and uniformly bounded degree.
If $U$ has the Northcott property with respect to $f$,
then $S$ has the Northcott property with respect to $f$ as well.

\end{lemma}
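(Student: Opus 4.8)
The plan is to mimic the classical proof of Northcott's theorem, replacing the explicit height estimates by the abstract inequalities $(f1)$, $(f2)$, $(f3)$ together with the hypothesis that $f$ is unbounded on the coefficient set $U$ in the Northcott sense, i.e. $\N_f(U)=\infty$. Fix $D\in\IN$ bounding the degrees of the polynomials whose roots constitute $S$. Suppose for contradiction that $S$ fails the Northcott property with respect to $f$, so there exist a threshold $X\geq 0$ and infinitely many distinct $\alpha\in S$ with $f(\alpha)<X$. Each such $\alpha$ is a root of a monic irreducible $P_\alpha\in K[x]$ of degree $d_\alpha\leq D$ with coefficients in $U$; write $P_\alpha(x)=\prod_{j=1}^{d_\alpha}(x-\alpha_j)$ over $\Kbar$, where $\alpha_1=\alpha$ and the $\alpha_j$ are the $K$-conjugates of $\alpha$.

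First I would bound $f$ on the conjugates: since $P_\alpha\in K[x]$ is irreducible, for each $j$ there is a $K$-embedding $\tau_j:K(\alpha)\to\Kbar\subset\Qbar$ with $\tau_j(\alpha)=\alpha_j$; extend $\tau_j$ to a field homomorphism $\Qbar\to\Qbar$, and then $(f1)$ gives $f(\alpha_j)=f(\tau_j(\alpha))=f(\alpha)<X$ for all $j$. Next I would propagate this bound to the coefficients of $P_\alpha$: each coefficient is, up to sign, an elementary symmetric function of $\alpha_1,\dots,\alpha_{d_\alpha}$, hence a sum of at most $\binom{D}{k}\leq 2^D$ products of at most $D$ of the $\alpha_j$'s. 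Iterating $(f2)$ and $(f3)$ and using that $F$ is continuous, hence bounded on the compact square $[0,X]^2$, one obtains a bound $f(c)\leq B(X,D)$ for every coefficient $c$ of every such $P_\alpha$, where $B(X,D)$ depends only on $X$, $D$, and $F$ — concretely, apply $F$ at most $D-1$ times to form the products (each product of numbers with $f$-value $\leq X$ has $f$-value at most some $C_1=C_1(X,D)$ by induction, since $F(t,t)\leq F(C_1,C_1)\leq C_1$ for $C_1$ large enough) and then at most $2^D-1$ more times to sum them, yielding a final bound $C_2=C_2(X,D)$. Since all coefficients lie in $U$ and there are only finitely many of them per polynomial, the set of coefficients appearing is a subset of $\{u\in U: f(u)\leq C_2\}$, which is \emph{finite} because $\N_f(U)=\infty$.

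Now I would conclude by a counting argument: there are only finitely many choices of degree $d\leq D$, and for each the coefficients must be chosen from a finite set, so there are only finitely many polynomials $P_\alpha$ in total. Each such polynomial has at most $D$ roots, so only finitely many algebraic numbers $\alpha$ can arise as roots of the $P_\alpha$'s. This contradicts the assumption that infinitely many distinct $\alpha\in S$ satisfy $f(\alpha)<X$. Hence $S$ has the Northcott property with respect to $f$.

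The only subtle point — and the step I would be most careful about — is the inductive choice of the constant controlling iterated applications of $F$. Because $F$ is only assumed continuous (not, say, monotone), one must argue that for each fixed $X$ and each fixed finite number $N=N(D)$ of iterations there is a constant $C$ with the property that starting from values in $[0,X]$ and applying $F$ at most $N$ times one never leaves $[0,C]$; this follows by a finite induction on the number of iterations, at each stage taking $C$ to be the supremum of $F$ over the compact square $[0,C_{\mathrm{prev}}]^2$, which is finite by continuity. Everything else is bookkeeping. I would also note in passing that the lemma generalises Northcott's theorem (take $K=\IQ$, $U=\IZ$, $f=h$), and that the extra generality over Dvornicich–Zannier is precisely the passage from $\IZ$-coefficients to coefficients in an arbitrary Northcott set $U$ inside an arbitrary subfield $K$.
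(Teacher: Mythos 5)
Your proof is correct and follows essentially the same route as the paper's: bound $f$ on the $K$-conjugates via $(f1)$, propagate to the coefficients through elementary symmetric functions using $(f2)$, $(f3)$ and the continuity of $F$, then invoke $\N_f(U)=\infty$ and count. One caution: the parenthetical ``$F(t,t)\leq F(C_1,C_1)\leq C_1$ for $C_1$ large enough'' in your middle paragraph is not valid in general (neither monotonicity of $F$ nor the existence of such a fixed point is assumed; indeed $F(x,y)=x+y+\log 2$ admits no such $C_1$), but your final paragraph already supplies the correct replacement via the finite iteration $C_{m+1}=\sup_{[0,C_m]^2}F$, so the argument as a whole is sound.
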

\begin{proof}
The following is a straightforward adaptation of the proof of \cite[Thm. 2.1]{DvZa}.
For the sake of completeness, we provide the details.
Let $X>0$, and $\alpha \in S$ an element with $f(\alpha) <X$.
If $\beta$ is a conjugate of $\alpha$ over $K$, then $(f1)$ implies
$f(\beta) =f(\alpha) \leq X$. Let $E$ be an integer such that $ [K(\gamma): K]\leq E $
for any element $\gamma \in S$. Denote the monic minimal polynomial of $\alpha$ over $K$ by
$M_{\alpha,K}(x)=a_0+ a_1 x+ \cdots + a_{d-1} x^{d-1}+ x^d $. By assumption, 
$a_i\in U$ for any $0\leq i <d$. 
Next we will exploit that each $a_i$ is an elementary symmetric function
in the conjugates of $\alpha$ (over $K$).
To this end, we first observe that there are at most $d\leq E$ conjugates of $\alpha$ over $K$.
By using the properties $(f2)$ and $(f3)$, and the fact that a continuous function attains its maximum on a compact set,
we infer that $f(a_i)$ is bounded from above in terms of $X,E$ and the function $F(\cdot,\cdot)$ for all $0\leq i<d$. 
Since $U$ has the Northcott property with respect to $f$, 
there are only finitely many such $(a_0, \ldots,a_{d-1})\in U^d$. 
Hence the number of $\alpha \in S$ with $f(\alpha) \leq X$ is finite for any $X>0$.
\end{proof}

The most important case is when $K=\IQ$ and $U=\IQ$ or $U=\IZ$ respectively, from which it follows that each  number field has the Northcott property with respect to $h(\cdot)$, and the ring of integers of each number field has the Northcott property with respect to the house $\house{\cdot}$.
We point out two further immediate consequences of Lemma \ref{lem:generalNorthcott} for  the Weil height and the house.

\begin{remark}\label{rem:sec2}
Suppose that  $K\subset L$ are fields of algebraic numbers and that $[L:K]$ is finite. We have
\begin{itemize}
\item[(a)]  $\N_{h}(K)=\infty$ if and only if $\N_{h}(L)=\infty$,
\item[(b)]   $\N_{\house{\cdot}}(\Oseen_K)=\infty$ if and only if  $\N_{\house{\cdot}}(\Oseen_L)=\infty$.
\end{itemize}
However,  the Northcott number with respect to the Weil height and the house is  not preserved
under finite extensions in general.
Indeed,  $\log \N_{\house{\cdot}}(\Oseen_{\IQ^{tr}})\geq \N_{h}(\IQ^{tr})\geq \frac{1}{2}\log((1+\sqrt{5})/2))$ by Schinzel's result \cite[Theorem 2]{Schinzel1973}
but $\IQ^{tr}(\sqrt{-1})$ contains infinitely many roots 
of unity, and hence, $\log \N_{\house{\cdot}}(\Oseen_{\IQ^{tr}(\sqrt{-1})})= \N_{h}(\IQ^{tr}(\sqrt{-1}))=0$.
\end{remark}

Next we describe a general characterisation of the Northcott number of a set that is represented as a union of an infinite nested sequence of sets.  
Let  $f:\Qbar \to [0,\infty)$.  For each set $S\subset \Qbar$ we set
$$\delta_f(S)=\inf\{f(\alpha); \alpha\in S\}.$$
Let $A_0\subset A_1\subset A_2\subset \cdots$ be a nested sequence of subsets of $\Qbar$, and we set $\displaystyle{A=\cup_{i\geq0} A_i}$.
The next lemma shows that this quantity, capturing the relative behaviour of the height 
function at each step, determines the Northcott number of $A$ under fairly mild assumptions.
\begin{lemma}\label{LemmaN}
Suppose that $\N_f(A_i)=\infty$ for all $i\in\IN_0$. We have\footnote{Of course, if the sequence $(A_i)_i$ becomes stationary, so that $\delta_f(A_{i}\backslash A_{i-1})=\infty$
for all large enough $i$, then the right hand-side is interpreted as $\infty$.}
$$\N_f(A)=\liminf_{i\to \infty} \delta_f(A_{i}\backslash A_{i-1}).$$
\end{lemma}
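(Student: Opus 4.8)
The plan is to write $L=\liminf_{i\to\infty}\delta_f(A_i\backslash A_{i-1})$ and to establish the two inequalities $\N_f(A)\le L$ and $\N_f(A)\ge L$ separately; everything reduces to an elementary manipulation of the definitions of $\N_f$, of $\delta_f$, and of $\liminf$, the only genuine input being the hypothesis $\N_f(A_i)=\infty$ for every $i$. I would record at the outset two trivial facts. First, since $f\ge 0$ we have $\delta_f\ge 0$, so $L\in[0,\infty]$. Second, the ``layers'' $A_i\backslash A_{i-1}$ ($i\ge 1$) are pairwise disjoint, because for $i<j$ one has $A_i\backslash A_{i-1}\subset A_i\subset A_{j-1}$, which is disjoint from $A_j\backslash A_{j-1}$; moreover $A=A_{n}\cup\bigcup_{i>n}(A_i\backslash A_{i-1})$ for every $n\ge 0$.

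For $\N_f(A)\le L$ (vacuous if $L=\infty$) I would fix a real $t>L$. From $\liminf_i\delta_f(A_i\backslash A_{i-1})<t$ it follows that $\delta_f(A_i\backslash A_{i-1})<t$ for infinitely many $i$; for each such $i$ the infimum is finite, so $A_i\backslash A_{i-1}$ is nonempty and contains some $\alpha_i$ with $f(\alpha_i)<t$. By disjointness of the layers the $\alpha_i$ are pairwise distinct, so $\{\alpha\in A;\ f(\alpha)<t\}$ is infinite, giving $\N_f(A)\le t$; letting $t\downarrow L$ yields the claim.

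For $\N_f(A)\ge L$ I would fix a real $t<L$ and show that $\{\alpha\in A;\ f(\alpha)<t\}$ is finite, which suffices since then no $t<L$ can make that set infinite. Choose $t'$ with $t<t'<L$. Since $L=\sup_n\inf_{i\ge n}\delta_f(A_i\backslash A_{i-1})>t'$, there is an index $N\ge 1$ with $\delta_f(A_i\backslash A_{i-1})>t'>t$ for all $i\ge N$; hence every element of a layer $A_i\backslash A_{i-1}$ with $i\ge N$ has $f$-value exceeding $t$, so $\{\alpha\in A_i\backslash A_{i-1};\ f(\alpha)\le t\}=\emptyset$ for these $i$. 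Using $A=A_{N-1}\cup\bigcup_{i\ge N}(A_i\backslash A_{i-1})$ this gives
$$\{\alpha\in A;\ f(\alpha)<t\}\subset\{\alpha\in A;\ f(\alpha)\le t\}=\{\alpha\in A_{N-1};\ f(\alpha)\le t\},$$
and the last set is finite because $\N_f(A_{N-1})=\infty$ forces $\{\alpha\in A_{N-1};\ f(\alpha)<t+1\}$ to be finite. Running the same computation for every real $t$ (with $t'=t+1$) covers the case $L=\infty$ and thus reproduces the stationary‑sequence convention recorded in the footnote.

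There is no real obstacle; the points that need care are purely bookkeeping: converting a strict $\liminf$ inequality into a statement about infinitely many (resp.\ all large) indices, the extended‑real conventions ($\inf\emptyset=\infty$ for an empty layer, and $\liminf$ possibly $+\infty$), keeping the thresholds $t<t'<L$ strict in the right places, and — the one mildly substantive point — observing that it is precisely the individual Northcott properties $\N_f(A_0),\dots,\N_f(A_{N-1})$ (equivalently just $\N_f(A_{N-1})$) that prevent the finitely many initial layers from contributing infinitely many elements of small $f$-value.
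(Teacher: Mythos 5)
Your proof is correct. The easy direction $\N_f(A)\le L$ is identical in substance to the paper's (disjointness of layers, one element per layer with small $f$-value), though you are more careful than the paper in noting that the Northcott hypothesis is not actually needed for this direction. For the harder direction $\N_f(A)\ge L$, you argue "forwards" from $L$: any element with $f<t<L$ must lie in the initial segment $A_{N-1}$, and $\N_f(A_{N-1})=\infty$ caps that contribution; the paper instead argues "backwards" from $\N_f(A)$, extracting a sequence of distinct $\alpha_i\in A$ with $f(\alpha_i)\to\N_f(A)$, assigning each to its unique layer, and using the Northcott hypothesis to force the layer index to tend to $\infty$. Both hinge on precisely the same two facts — disjointness of the layers, and $\N_f(A_i)=\infty$ controlling the finitely many early layers — so they are essentially the same argument; your version is slightly more elementary in that it avoids constructing a minimizing sequence and instead works directly with $\varepsilon$–$N$ bookkeeping, and it spells out the $L=\infty$ and $L=0$ edge cases more explicitly.
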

\begin{proof}
Since  $\N_f(A_i)=\infty$ for all $i\geq0$ we conclude that $\N_f(A)\leq\liminf \delta_f(A_{i}\backslash A_{i-1}).$
To prove that $\N_f(A)\geq \liminf \delta_f(A_{i}\backslash A_{i-1})$ we can assume that $\N_f(A)<\infty.$
There exists  a sequence $(\alpha_i)_i\subset A$ of pairwise distinct elements with $(f(\alpha_i))_i$
converges to $\N_f(A)$. 
For $\alpha_i$ we set $\iota=\iota (\alpha_i)=\min\{l; \alpha_i\in A_l\}$
so  that $\alpha_i\in A_\iota\backslash A_{\iota-1}$. Hence,
\begin{alignat}1\label{proofProp2ineq}
f(\alpha_i)\geq \delta_f(A_{\iota}\backslash A_{\iota-1}).
\end{alignat}
Since  $\N_f(A_i)=\infty$ for all $i$ we infer that $\iota \rightarrow \infty$ as $i\rightarrow \infty$.
As the left hand side in (\ref{proofProp2ineq}) tends to $\N_f(A)$ the claim drops out. 
\end{proof}

\section{A lower bound for the maximum of a unitary complex polynomial on given points}\label{sec:geometriclowerbound}

For $d\in \IN $ let 
\[
\zeta_{d}=\exp\left(\frac{2\pi i}{d}\right)
\]
denote a primitive $d^{th}$-root of unity.
\begin{definition}[Finite discrepancy]\label{def:Discrepancy}
For $\vxi=(\xi_1,\ldots,\xi_d)\in \IC^d$
we set 
\begin{alignat}1
D(\vxi)=\inf_{\phi}\max_{1\leq j\leq d}\min_{1\leq i\leq d}|\xi_i-\phi(\zeta_d^j)|,
\end{alignat}
where $\phi$ runs over all rotations about the origin (i.e., 
$\phi(z)=\u z$ for some $\u\in \IT=\{c\in \IC; \vert c \vert=1\}$).
\end{definition}
Note that $D(\vxi)$  is invariant under permutation of the entries of $\vxi$ - a  fact that will be used in the sequel  
without further notice.
The \emph{multiplicity}, on the other hand,
is important: $D(\xi)=\vert \vert \xi \vert -1\vert$ but $D(\xi,\xi)=\sqrt{1 + \vert \xi \vert^2}$.  
Also, note that $D(\vxi)=0$ if and only if $\xi_1,\ldots,\xi_d$ are perfectly equidistributed on the unit circle, i.e.,
if $\{\xi_1,\ldots,\xi_d\}=\{\phi(\zeta_d),\ldots,\phi(\zeta_d^d)\}$ for some rotation about the origin $\phi$.

\begin{lemma}[$\ell^{2}$-lower bound]
\label{lem: ell two lower bound}
Let $\vxi\in \IC^d$  and suppose $\Q(x)=b_{0}+b_{1}x+\cdots+b_{n}x^{n}\in\mathbb{C}[x]$
has degree strictly less than $d$. We have 
\[
\max_{1\leq i\leq d}\vert \Q(\xi_i)\vert\geq\left(1-d^{3/2}D(\vxi)\max_{1\leq i\leq d}\{1,|\xi_i|\}^{d-2}\right)\sqrt{\sum_{0\leq i\leq n}\vert b_{i}\vert^2}.
\]
If $n=0$ then we can omit the first factor.
\end{lemma}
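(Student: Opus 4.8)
The plan is to reduce to the perfectly equidistributed case, where the asserted inequality is an exact identity stemming from the orthogonality of roots of unity, and then to absorb the error caused by the deviation from equidistribution through a crude bound on $\Q'$.

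First I would dispose of the trivial case $n=0$, where $\Q\equiv b_{0}$ and $\max_{i}|\Q(\xi_{i})|=|b_{0}|=\sqrt{|b_{0}|^{2}}$; so assume $n\ge1$, and hence $n\le d-1$ (in particular $d\ge2$) since $\deg\Q<d$. The function $u\mapsto\max_{1\le j\le d}\min_{1\le i\le d}|\xi_{i}-u\zeta_{d}^{j}|$ is continuous on the compact group $\IT$, so the infimum defining $D(\vxi)$ is attained; I fix a rotation $z\mapsto uz$ realising it and set $\eta_{j}=u\zeta_{d}^{j}$ for $1\le j\le d$. These are $d$ distinct, perfectly equidistributed points on the unit circle, and for every $j$ there is an index $i(j)$ with $|\xi_{i(j)}-\eta_{j}|\le D(\vxi)$. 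Expanding $|\Q(\eta_{j})|^{2}=\sum_{0\le k,l\le n}b_{k}\overline{b_{l}}\,u^{k}\overline{u}^{l}\zeta_{d}^{j(k-l)}$ and summing over $j$, the inner sum $\sum_{j=1}^{d}\zeta_{d}^{j(k-l)}$ equals $d$ if $k=l$ and $0$ otherwise — this is the point where $\deg\Q<d$ enters, since then $|k-l|\le n\le d-1<d$ forces every off-diagonal term to cancel. With $|u|=1$ this yields the identity
$$\sum_{j=1}^{d}|\Q(\eta_{j})|^{2}=d\sum_{0\le k\le n}|b_{k}|^{2}.$$

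Next I would estimate the perturbation. Put $M=\max_{1\le i\le d}|\Q(\xi_{i})|$ and $R=\max_{1\le i\le d}\{1,|\xi_{i}|\}$. For each $j$ the segment from $\xi_{i(j)}$ to $\eta_{j}$ lies in the disc $\{|z|\le R\}$, as both endpoints do and the disc is convex, and on that disc
$$|\Q'(z)|\le\sum_{k=1}^{n}k|b_{k}|\,|z|^{k-1}\le R^{d-2}\sum_{k=1}^{n}k|b_{k}|\le n^{3/2}R^{d-2}\Bigl(\sum_{0\le k\le n}|b_{k}|^{2}\Bigr)^{1/2},$$
using $|z|^{k-1}\le R^{d-2}$ for $1\le k\le n$ (since $R\ge1$ and $k-1\le d-2$) together with Cauchy--Schwarz and $\sum_{k=1}^{n}k^{2}\le n^{3}$. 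Integrating $\Q'$ along that segment and using $n\le d-1$,
$$|\Q(\eta_{j})-\Q(\xi_{i(j)})|\le|\xi_{i(j)}-\eta_{j}|\cdot n^{3/2}R^{d-2}\Bigl(\sum_{0\le k\le n}|b_{k}|^{2}\Bigr)^{1/2}\le D(\vxi)\,d^{3/2}R^{d-2}\Bigl(\sum_{0\le k\le n}|b_{k}|^{2}\Bigr)^{1/2}.$$

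Finally I would assemble the two inputs via the triangle inequality in $\ell^{2}(\IR^{d})$: from the pointwise bound $|\Q(\eta_{j})|\le|\Q(\xi_{i(j)})|+|\Q(\eta_{j})-\Q(\xi_{i(j)})|$, monotonicity and subadditivity of the $\ell^{2}$-norm on non-negative vectors, the displayed identity, and the bound above (each of the $d$ entries of the two error vectors being at most $M$, respectively $D(\vxi)d^{3/2}R^{d-2}(\sum_{k}|b_{k}|^{2})^{1/2}$), one obtains
$$\sqrt{d}\Bigl(\sum_{0\le k\le n}|b_{k}|^{2}\Bigr)^{1/2}\le\sqrt{d}\,M+\sqrt{d}\,D(\vxi)\,d^{3/2}R^{d-2}\Bigl(\sum_{0\le k\le n}|b_{k}|^{2}\Bigr)^{1/2};$$
dividing by $\sqrt{d}$ and rearranging yields exactly the stated inequality, while the closing clause of the lemma is the trivial case $n=0$ already handled. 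I expect the only step requiring genuine care to be the orthogonality computation — making sure the hypothesis $\deg\Q<d$ is invoked precisely where the off-diagonal terms must vanish; the perturbation estimate and the $\ell^{2}$ assembly are routine Cauchy--Schwarz and Minkowski bookkeeping, with the loose constants chosen so as to land on $d^{3/2}$.
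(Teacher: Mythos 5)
Your proof is correct and follows essentially the same route as the paper: reduce to perfectly equidistributed points on the unit circle where Parseval/orthogonality of roots of unity gives the exact identity, then absorb the deviation via a line-integral bound on $\Q'$ combined with Cauchy--Schwarz, landing on the same constant $d^{3/2}\max_i\{1,|\xi_i|\}^{d-2}$. The only cosmetic difference is in the final assembly --- you use Minkowski's inequality in $\ell^2(\IC^d)$ with one optimal rotation fixed, whereas the paper uses a $\max$-based triangle inequality together with $\sup_\phi$ and $\inf_\phi$ --- but the ingredients and quantitative outcome are identical.
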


\begin{proof}
Note that the statement is trivially true for $d=1$. So we can assume $d\geq 2$.
First, let us assume that $\xi_i=\zeta_d^i$ for $1\leq i\leq d$.
We observe that 
\[
\left(\max_{1\leq i\leq d}\vert \Q(\zeta_{d}^{i})\vert\right)^{2}\geq\frac{1}{d}
\sum_{1\leq i\leq d}\vert \Q(\zeta_{d}^{i})\vert^{2}.
\]
Since 
\[
\vert \Q(\zeta_{d}^{i})\vert^{2}=
\sum_{0\leq k,l\leq n}b_{k}\zeta_{d}^{ik}\overline{b_{l}\zeta_{d}^{il}}
=\sum_{0\leq k,l\leq n}b_{k}\overline{b_{l}}\zeta_{d}^{i(k-l)},
\]
we conclude that 
\[
\left(\max_{1\leq i\leq d}\vert \Q(\zeta_{d}^{i})\vert\right)^{2}\geq\frac{1}{d}\sum_{0\leq k,l\leq n}b_{k}\overline{b_{l}}\sum_{1\leq i\leq d}\zeta_{d}^{i(k-l)}.
\]
The inner-most sum vanishes unless $d\mid k-l$.
Because $n<d$ this can only occur if $k=l$, in which case the inner-most
sum equals $d$. Using this and taking the square-root completes the
proof in this case.

Now we note that the same estimate holds true if, $D(\vxi)=0$
i.e., if, after relabeling, $\xi_i=\u \zeta_d^i$ for some fixed $\u$ on the unit circle. Indeed, 
$\Q(\xi_i)=\Q(\u\zeta_d^i)=\tilde{\Q}(\zeta_d^i)$ with $\tilde{\Q}(x)=b_{0}+b_{1}\u x+\cdots+b_{n}\u^n x^{n}$, and the coefficient vectors of 
$\Q$ and $\tilde{\Q}$ both have the same $\ell^2$-norm.

Next we consider the general case. 
Using the trivial estimate and the previous special case we obtain
\begin{alignat*}1
\max_{1\leq i\leq d}\vert \Q(\xi_i)\vert&\geq \sup_{\phi}\left(\max_{1\leq j\leq d}|\Q(\phi(\zeta_d^j))| -\max_{1\leq j\leq d}\min_{1\leq i\leq d}|\Q(\xi_i)-\Q(\phi(\zeta_d^j))|\right)\\
&\geq \sqrt{\sum_{0\leq i\leq n}\vert b_{i}\vert^2}-\inf_{\phi}\max_{1\leq j\leq d}\min_{1\leq i\leq d}|\Q(\xi_i)-\Q(\phi(\zeta_d^j))|.
\end{alignat*}
Hence, it suffices to show
\begin{alignat}1\label{ineq:error}
\inf_{\phi}\max_{1\leq j\leq d}\min_{1\leq i\leq d}|\Q(\xi_i)-\Q(\phi(\zeta_d^j))|
\leq D(\vxi)d^{3/2}\max_{1\leq i\leq d}\{1,|\xi_i|\}^{d-2}\sqrt{\sum_{0\leq i\leq n}\vert b_{i}\vert^2}.
\end{alignat}

Let $\gamma$ denote the (complex) line segment
connecting a point $\zeta\in \IT$ with an arbitrary point $\xi$.
We use the parametrisation $\gamma(\tau)=(1-\tau)\zeta+\tau \xi$ with $\tau\in [0,1]$.
By using the complex line integral of $\Q$ along $\gamma$, we write
$\Q(\xi)-\Q(\zeta)=\int_{\gamma} \Q'(\tau) ~\mathrm{d}\tau$. Hence, 
\begin{alignat}1\label{ineq:f}
|\Q(\xi)-\Q(\zeta)| \leq \vert \xi-\zeta \vert \max_{ \tau \in [0,1]}\vert \Q'(\gamma(\tau))\vert .
\end{alignat}
Further, as $\vert \zeta \vert \leq \max\{ 1, \vert \xi \vert \} $, we have
$ \vert \gamma(\tau) \vert \leq  \max\{1,|\xi|\}$
for any $\tau\in[0,1]$. By the Cauchy--Schwarz inequality,
\begin{alignat}1\label{ineq:f'}
\vert \Q'(\gamma(\tau))\vert 
\leq n \sqrt{\sum_{0\leq i < n}\vert \gamma(\tau)\vert^{2 i}} 
          \sqrt{\sum_{0\leq i < n}\vert b_{i}\vert^2}
\leq d^{3/2}\max\{1,|\xi|\}^{d-2}\sqrt{\sum_{0\leq i\leq n}\vert b_{i}\vert^2}.
\end{alignat}
Combining (\ref{ineq:f}) and (\ref{ineq:f'}), with $\xi=\xi_i$ and $\zeta=\phi(\zeta_d^j)$, 
while using $\max\{1,|\xi_i|\}\leq \max_{1\leq i \leq d}\{1,|\xi_i|\}$
the inequality \eqref{ineq:error} drops out. This proves the lemma.
\end{proof}

Next we derive a corollary. Recall that $\|\cdot\|$ denotes the maximum norm on $\IC^d$.

\begin{corollary}
\label{cor: ell two lower bound}
Let $\vxi\in \IC^d\setminus\{\mathbf{0}\}$ and suppose that
$\Q(x)=b_{0}+b_{1}x+\cdots+b_{n}x^{n}\in\mathbb{C}[x]$
has degree strictly less than $d$. We have 
\[
\max_{1\leq i\leq d}\vert \Q(\xi_i)\vert\geq \left(1-d^{3/2}D\left(\frac{\vxi}{\|\vxi\|}\right)\right)\sqrt{\sum_{0\leq i\leq n}\vert b_{i}\cdot\|\vxi\|^i\vert^2},
\]
If $n=0$ then we can omit the first factor.
\end{corollary}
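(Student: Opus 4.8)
The plan is to reduce the corollary to Lemma \ref{lem: ell two lower bound} by a straightforward rescaling of the variable. Write $R=\|\vxi\|$; since $\vxi\neq \mathbf{0}$ we have $R>0$. Set $\vtau = \vxi/R$, so that $\|\vtau\|=1$, and in particular $\max_{1\leq i\leq d}\{1,|\tau_i|\}=1$. The key observation is that evaluating $\Q$ at the points $\xi_i=R\tau_i$ is the same as evaluating the rescaled polynomial $\widetilde\Q(x)=\Q(Rx)=\sum_{0\leq i\leq n} (b_i R^i) x^i$ at the points $\tau_i$. Thus $\max_{1\leq i\leq d}|\Q(\xi_i)| = \max_{1\leq i\leq d}|\widetilde\Q(\tau_i)|$, and $\widetilde\Q$ still has degree strictly less than $d$.

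Next I would apply Lemma \ref{lem: ell two lower bound} to $\widetilde\Q$ and the point vector $\vtau$. The lemma gives
\[
\max_{1\leq i\leq d}|\widetilde\Q(\tau_i)| \geq \left(1 - d^{3/2} D(\vtau)\max_{1\leq i\leq d}\{1,|\tau_i|\}^{d-2}\right)\sqrt{\sum_{0\leq i\leq n} |b_i R^i|^2}.
\]
Because $\|\vtau\|=1$ forces $|\tau_i|\leq 1$ for all $i$, the factor $\max_{1\leq i\leq d}\{1,|\tau_i|\}^{d-2}$ equals $1$ (here one should note $d\geq 2$ is the only case that needs argument, the case $d=1$ being covered as in the lemma, where the first factor is dropped). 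Substituting $\vtau=\vxi/\|\vxi\|$ and $R=\|\vxi\|$ back in yields exactly
\[
\max_{1\leq i\leq d}|\Q(\xi_i)| \geq \left(1 - d^{3/2} D\!\left(\frac{\vxi}{\|\vxi\|}\right)\right)\sqrt{\sum_{0\leq i\leq n} |b_i \cdot \|\vxi\|^i|^2},
\]
which is the claimed inequality; and when $n=0$ the first factor is omitted just as in the lemma.

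There is essentially no obstacle here — the only point requiring a moment's care is bookkeeping: checking that the substitution $x\mapsto \|\vxi\|x$ interacts correctly with the $\ell^2$-norm of the coefficient vector (the $i$-th coefficient of $\widetilde\Q$ is $b_i\|\vxi\|^i$, which is precisely what appears under the square root in the statement) and that the normalisation $\vxi/\|\vxi\|$ makes the auxiliary exponent $\max\{1,|\tau_i|\}^{d-2}$ collapse to $1$. The hypothesis $\vxi\neq\mathbf{0}$ is exactly what is needed to divide by $\|\vxi\|$.
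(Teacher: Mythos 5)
Your proof is correct and is exactly the argument the paper uses: apply Lemma \ref{lem: ell two lower bound} to $\vxi/\|\vxi\|$ and $\tilde\Q(x)=\Q(\|\vxi\|x)$, noting that normalising forces $\max_i\{1,|\tau_i|\}=1$ so the exponent-$(d-2)$ factor collapses. The paper states this in one line; you simply spell out the same bookkeeping.
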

\begin{proof}
Apply Lemma \ref{lem: ell two lower bound} with $\vxi$ and $\Q(x)=b_{0}+b_{1}x+\cdots+b_{n}x^{n}$ replaced by $\frac{\vxi}{\|\vxi\|}$
and $\tilde{\Q}(x)=\Q(\|\vxi\| x)$.
\end{proof}

 Decomposing $\Q(x)=\sum_{j\in J}b_j x^{j}+\sum_{k\notin J}b_k x^{k}$ and applying 
 Lemma \ref{lem: ell two lower bound} or Corollary \ref{cor: ell two lower bound} to  $\sum_{j\in J}b_j x^{j}$ sometimes
 allows to produce non-trivial results, even when $\deg \Q\geq d$. 
 Let us record here only  the special case when all $\xi_i$ lie on the unit circle.
\begin{corollary}
\label{cor: ell two lower bound two}
Let $\vec \xi \in \IT^d$, and 
suppose $\Q(x)=b_{0}+b_{1}x+\cdots+b_{n}x^{n}\in\mathbb{C}[x]$. 
Then for every $I\subset \{1,2,\ldots,d\}$ and each non-empty $J\subset \{0,1,\ldots,n\}$
with $\max_{j\in J} j -\min_{j\in J} j<\#I$ we have
\[
\max_{1\leq i\leq d}\vert \Q(\xi_i)\vert\geq\big(1-(\#I)^{3/2}D_I\big)\sqrt{\sum_{j\in J}\vert b_{j}\vert^2}-\sum_{k\notin J}\vert b_{k}\vert.
\]
where $D_I=D(({\xi_i})_{i\in I})$. 
\end{corollary}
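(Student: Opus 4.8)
The plan is to reduce Corollary \ref{cor: ell two lower bound two} to Corollary \ref{cor: ell two lower bound} by isolating the ``good'' part of the polynomial supported on the exponents in $J$. Concretely, write $\Q(x)=P(x)+R(x)$ where $P(x)=\sum_{j\in J}b_j x^{j}$ and $R(x)=\sum_{k\notin J}b_k x^{k}$. Since all $\xi_i\in\IT$, we have $|\xi_i|=1$, so the triangle inequality gives, for every $i$,
\begin{alignat*}1
|\Q(\xi_i)|\geq |P(\xi_i)|-|R(\xi_i)|\geq |P(\xi_i)|-\sum_{k\notin J}|b_k|.
\end{alignat*}
Taking the maximum over $1\leq i\leq d$ (or rather over $i\in I$, which only decreases the left side and hence is still a valid lower bound, since $\max_{1\leq i\leq d}|\Q(\xi_i)|\geq\max_{i\in I}|\Q(\xi_i)|$) yields
\begin{alignat*}1
\max_{1\leq i\leq d}|\Q(\xi_i)|\geq \max_{i\in I}|P(\xi_i)|-\sum_{k\notin J}|b_k|.
\end{alignat*}
So the task becomes bounding $\max_{i\in I}|P(\xi_i)|$ from below.

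For that lower bound I would apply Corollary \ref{cor: ell two lower bound} with the point vector $(\xi_i)_{i\in I}\in \IT^{\#I}$ in place of $\vxi$ and with $P$ in place of $\Q$. Here the dimension parameter is $d'=\#I$. The polynomial $P$ has exponents ranging between $m=\min_{j\in J}j$ and $\max_{j\in J}j$; to turn $P$ into an honest degree-$<d'$ polynomial I factor out $x^{m}$, writing $P(x)=x^{m}\widetilde P(x)$ with $\widetilde P(x)=\sum_{j\in J}b_j x^{j-m}$, which has degree $\max_{j\in J}j-m<\#I=d'$ by the hypothesis $\max_{j\in J}j-\min_{j\in J}j<\#I$. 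Since $|\xi_i|=1$, multiplication by $x^{m}$ does not change moduli, so $|P(\xi_i)|=|\widetilde P(\xi_i)|$, hence $\max_{i\in I}|P(\xi_i)|=\max_{i\in I}|\widetilde P(\xi_i)|$. Also $\|(\xi_i)_{i\in I}\|=1$, so in Corollary \ref{cor: ell two lower bound} the factor $\|\vxi\|^i=1$ for all $i$ and the $\ell^2$-norm that appears is simply $\sqrt{\sum_{j\in J}|b_j|^2}$, while $D(\vxi/\|\vxi\|)=D((\xi_i)_{i\in I})=D_I$. Thus Corollary \ref{cor: ell two lower bound} gives exactly
\begin{alignat*}1
\max_{i\in I}|P(\xi_i)|\geq \big(1-(\#I)^{3/2}D_I\big)\sqrt{\sum_{j\in J}|b_j|^2}.
\end{alignat*}

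Combining the two displayed inequalities produces the claimed bound
\begin{alignat*}1
\max_{1\leq i\leq d}|\Q(\xi_i)|\geq \big(1-(\#I)^{3/2}D_I\big)\sqrt{\sum_{j\in J}|b_j|^2}-\sum_{k\notin J}|b_k|,
\end{alignat*}
and the final clause (omit the first factor when $n=0$) follows since then $J\subset\{0\}$ forces $\widetilde P$ to be constant, i.e.\ the $n=0$ case of Corollary \ref{cor: ell two lower bound} applies. There is essentially no obstacle here: the only points requiring a little care are the bookkeeping of the index set $I$ (noting that restricting the maximum to $i\in I$ keeps it a lower bound for the maximum over all $i$), the degree count after factoring out $x^{m}$, and the observation that every normalisation factor in Corollary \ref{cor: ell two lower bound} trivialises because the $\xi_i$ lie on the unit circle. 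The mild subtlety worth flagging explicitly is that one must apply Corollary \ref{cor: ell two lower bound} to the shifted polynomial $\widetilde P$ rather than to $P$ directly, since $P$ itself may have degree $\geq\#I$; this is precisely what the hypothesis on $\max_{j\in J}j-\min_{j\in J}j$ is for.
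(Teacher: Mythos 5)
Your proof is correct and is precisely the argument the paper hints at but does not spell out: the sentence preceding the corollary suggests decomposing $\Q(x)=\sum_{j\in J}b_jx^j+\sum_{k\notin J}b_kx^k$ and applying Corollary \ref{cor: ell two lower bound} to the first sum, and you carry this out carefully, including the two points the paper leaves implicit, namely factoring out $x^{\min_{j\in J}j}$ to get a polynomial of degree strictly less than $\#I$, and using $|\xi_i|=1$ both to trivialise the normalisation $\|\vxi\|$ and to bound $|R(\xi_i)|\leq\sum_{k\notin J}|b_k|$. The one cosmetic slip is your closing remark about an ``omit the first factor when $n=0$'' clause: Corollary \ref{cor: ell two lower bound two} as stated has no such clause, so that sentence addresses a claim that is not actually being made, though it does no harm.
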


\section{Application to Galois orbits and lower bounds for the house}\label{sec:Galoisorbits}
In this section we apply the results of Section \ref{sec:geometriclowerbound} to a diophantine setting. Throughout this section
let $K$ be subfield of $\Qbar$, $\xi\in \Qbar\backslash\{0\}$, and $d=[K(\xi):K]$. Let us write 
$$M_{\xi,K}=(x-\xi_1)\cdots(x-\xi_d)\in K[x]$$ 
for the monic minimal polynomial 
of $\xi$ over $K$ with conjugates $\xi_1,\ldots,\xi_d$ over $K$. Recall that
$$\mathrm{Hom}(K)=\{\sigma:K\to \IC; \text{ field homomorphism}\}.$$
For each $\sigma\in\mathrm{Hom}(K)$  let 
$$\tau_{1,\sigma},\ldots,\tau_{d,\sigma}\in \mathrm{Hom}(K(\xi))$$ 
be the $d$ extensions of $\sigma$ from $K$
to $K(\xi)$, and set
\begin{alignat*}1
\vtau_\sigma(\xi)=(\tau_{1,\sigma}(\xi),\ldots,\tau_{d,\sigma}(\xi)).
\end{alignat*}
So the components of $\vtau_\sigma(\xi)$ are precisely the $d$ distinct roots of the irreducible polynomial $\sigma(M_{\xi,K})\in \sigma(K)[x]$.

\begin{lemma}
\label{lem:housebound}
Let $\Oseen$ be a subring of $\Oseen_K$, and
let $\xi$ be a non-zero algebraic integer. Suppose that $P(x)=a_{0}+ a_{1}x+\cdots + a_{n}x^{n} \in \Oseen[x]$ is of degree
$1\leq n<d$, and let  $ \alpha=P(\xi)$.
Then  we have
$$\housealp \geq \max_{\sigma\in \mathrm{Hom}(K)}\,
\left(1-d^{3/2}D\left(\frac{\vtau_\sigma(\xi)}{\|\vtau_\sigma(\xi)\|}\right)\right)\,
\left|\sigma(a_{n})\right|\|\vtau_\sigma(\xi)\|^{n}$$
\end{lemma}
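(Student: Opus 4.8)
The plan is to unwind the definition of the house and then feed the resulting inequality into Corollary \ref{cor: ell two lower bound}. As a preliminary remark, $\alpha=P(\xi)\neq 0$: since $\deg P=n$ with $1\le n<d=\deg M_{\xi,K}$, an equality $P(\xi)=0$ would contradict the minimality of $M_{\xi,K}$, so in particular $\house{\alpha}>0$. (In any case the chain of inequalities below is valid even when the asserted lower bound happens to be negative, in which case there is nothing to prove.)

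First I would fix an arbitrary $\sigma\in\Hom(K)$ and reduce $\house{\alpha}$ to a quantity attached to $\sigma$. Since $\alpha\in K(\xi)$, the conjugates of $\alpha$ over $\IQ$ are precisely the numbers $\tau(\alpha)$ with $\tau\in\Hom(K(\xi))$, and $\Hom(K(\xi))$ is the disjoint union, over $\sigma'\in\Hom(K)$, of the $d$ extensions $\tau_{1,\sigma'},\dots,\tau_{d,\sigma'}$. Hence
$$\house{\alpha}=\max_{\sigma'\in\Hom(K)}\ \max_{1\le i\le d}\vert\tau_{i,\sigma'}(\alpha)\vert\ \ge\ \max_{1\le i\le d}\vert\tau_{i,\sigma}(\alpha)\vert.$$
Because $a_0,\dots,a_n\in\Oseen\subseteq\Oseen_K\subseteq K$ we have $\tau_{i,\sigma}(a_j)=\sigma(a_j)$, so writing $Q(x)=\sigma(a_0)+\sigma(a_1)x+\cdots+\sigma(a_n)x^n\in\IC[x]$ for the ``$\sigma$-conjugate'' of $P$, we get $\tau_{i,\sigma}(\alpha)=Q(\tau_{i,\sigma}(\xi))$, where $\tau_{i,\sigma}(\xi)$ is the $i$-th component of $\vtau_\sigma(\xi)$. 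Note that $\deg Q=n<d$, since $\sigma$ is injective and thus $\sigma(a_n)\neq 0$.

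Then I would apply Corollary \ref{cor: ell two lower bound} to the vector $\vtau_\sigma(\xi)\in\IC^d\setminus\{\mathbf{0}\}$ (nonzero because $\xi\neq 0$ and the $\tau_{i,\sigma}$ are injective) and to the polynomial $Q$ of degree $n<d$; the components of $\vtau_\sigma(\xi)$ are exactly the roots of $\sigma(M_{\xi,K})$. This gives
$$\max_{1\le i\le d}\vert Q(\tau_{i,\sigma}(\xi))\vert\ \ge\ \left(1-d^{3/2}D\!\left(\frac{\vtau_\sigma(\xi)}{\|\vtau_\sigma(\xi)\|}\right)\right)\sqrt{\sum_{0\le j\le n}\bigl\vert\sigma(a_j)\,\|\vtau_\sigma(\xi)\|^{j}\bigr\vert^{2}},$$
and keeping only the $j=n$ term under the square root bounds the sum below by $\vert\sigma(a_n)\vert\,\|\vtau_\sigma(\xi)\|^{n}$. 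Combining the last two displays yields
$$\house{\alpha}\ \ge\ \left(1-d^{3/2}D\!\left(\frac{\vtau_\sigma(\xi)}{\|\vtau_\sigma(\xi)\|}\right)\right)\vert\sigma(a_n)\vert\,\|\vtau_\sigma(\xi)\|^{n},$$
and since $\sigma\in\Hom(K)$ was arbitrary, taking the maximum over $\sigma$ finishes the proof. I do not expect a genuine obstacle here: the only point requiring care is the bookkeeping behind the identity $\tau_{i,\sigma}(\alpha)=Q(\tau_{i,\sigma}(\xi))$ with $Q$ the $\sigma$-conjugate of $P$, together with the decomposition of $\Hom(K(\xi))$ over $\Hom(K)$, after which Corollary \ref{cor: ell two lower bound} does all the work.
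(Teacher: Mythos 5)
Your proof is correct and follows essentially the same route as the paper's: decompose the house over the embeddings of $K(\xi)$ grouped by their restriction to $K$, rewrite $\tau(P(\xi))$ as $\sigma(P)$ evaluated at $\tau(\xi)$, apply Corollary \ref{cor: ell two lower bound} to $\vtau_\sigma(\xi)$, and bound the $\ell^2$-sum below by its top term. The extra checks you include (that $\alpha\neq 0$, that $\deg Q=n$, that $\vtau_\sigma(\xi)\neq\mathbf{0}$) are fine but not part of the paper's more compact write-up.
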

\begin{proof}
We have 
\begin{align*}
\housealp & \geq\max_{\sigma\in \mathrm{Hom}(K)}\,\max_{\underset{\tau\mid_{K}=\sigma}{\tau\in \mathrm{Hom}(K(\xi))}}\left|\tau(P(\xi))\right|
=\max_{\sigma\in \mathrm{Hom}(K)}\,\max_{\underset{\tau\mid_{K}=\sigma}{\tau\in \mathrm{Hom}(K(\xi))}}\left|\sigma(P)(\tau(\xi))\right|.
\end{align*}
Due to Corollary \ref{cor: ell two lower bound}, the right hand side is at least
\begin{equation}\label{final}
\max_{\sigma\in \mathrm{Hom}(K)}\,
\left(1-d^{3/2}D\left(\frac{\vtau_\sigma(\xi)}{\|\vtau_\sigma(\xi)\|}\right)\right)\,
\left(\sum_{0\leq k\leq n}
\left|\sigma(a_{k})\|\vtau_\sigma(\xi)\|^{k}\right|^{2}\right)^{1/2}.
\end{equation}
The second factor is, trivially, at least $\left|\sigma(a_{n})\right|\|\vtau_\sigma(\xi)\|^{n}$,
which proves the claim. 
\end{proof}

We now introduce a  new invariant for the tuple $(K,\xi)$.
\begin{definition}\label{def:etainvariant}
Let
\begin{alignat}1
\eta(K,\xi)=\min_{\sigma\in\mathrm{Hom}(K)}\min\{\|\vtau_\sigma(\xi)\|,\|\vtau_\sigma(\xi)\|^{d-1}\}\left(1-d^{3/2}D\left(\frac{\vtau_\sigma(\xi)}{\|\vtau_\sigma(\xi)\|}\right)\right).
\end{alignat}
\end{definition}
\begin{remark}\label{rem:etainvariant}
The quantity $\eta(K,\xi)$ can also be expressed 
in terms of the minimal polynomial $M_{\xi,K}$. 
For  $M=(x-\alpha_1)\cdots(x-\alpha_d)\in \IC[x]\backslash \IC$, not a monomial, define
$$\eta_0(M)=\min\{\|\valpha\|,\|\valpha\|^{d-1}\}\left(1-d^{3/2}
D\left(\frac{\valpha}{\|\valpha\|}\right)\right),$$
where $\valpha=(\alpha_1,\ldots,\alpha_d)\in \IC^d\setminus\{\mathbf{0}\}$. We have
$$\eta(K,\xi)=\min_{\sigma\in \Hom(K)}\eta_0(\sigma(M_{\xi,K})),$$
where $\sigma(M_{\xi,K})$ is understood to be applied coefficient-wise.
This shows that $\eta(K,\xi)=\eta(L,\xi)$ whenever $M_{\xi,K}=M_{\xi,L}$. 
In particular, if $[K(\xi):K]=[\IQ(\xi):\IQ]$ then 
\begin{alignat}1
\eta(K,\xi)=\eta(\IQ,\xi).
\end{alignat}
\end{remark}

Our next result is  key to determine the Northcott number of various examples, including those leading to Theorem \ref{thm: spectrum full for house function}.
Roughly speaking,  $\eta(\cdot,\cdot)$ will provide a lower bound for the house 
of ``new elements'' in extensions as required to apply the general Lemma \ref{LemmaN}. 

\begin{proposition}[Key Proposition]
\label{prop:housebound}
Let $\Oseen$ be a subring of $\Oseen_K$, 
let $\xi$ be an algebraic integer but not in $\Oseen_K$, and suppose that
$M_{\xi,K}\in \Oseen[x]$. Then we have
$$\delta_{\house{\cdot}}(\Oseen[\xi]\backslash \Oseen)\geq \eta(K,\xi).$$
\end{proposition}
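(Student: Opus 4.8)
The plan is to take an arbitrary element $\alpha \in \Oseen[\xi] \setminus \Oseen$ and show $\housealp \geq \eta(K,\xi)$; taking the infimum over such $\alpha$ then gives the claim. Write $\alpha = P(\xi)$ with $P(x) = a_0 + a_1 x + \cdots + a_n x^n \in \Oseen[x]$, and since $\xi$ satisfies the monic polynomial $M_{\xi,K} \in \Oseen[x]$ of degree $d$, we may reduce $P$ modulo $M_{\xi,K}$ and assume $n = \deg P < d$. The key dichotomy is on $n$: if $n \geq 1$, Lemma~\ref{lem:housebound} applies directly and yields, for every $\sigma \in \mathrm{Hom}(K)$,
\[
\housealp \geq \left(1 - d^{3/2} D\!\left(\tfrac{\vtau_\sigma(\xi)}{\|\vtau_\sigma(\xi)\|}\right)\right) |\sigma(a_n)| \, \|\vtau_\sigma(\xi)\|^{n}.
\]
Now $a_n \in \Oseen \subset \Oseen_K$ is a nonzero algebraic integer (if $a_n = 0$ we would have $\deg P < n$), and it is \emph{not} a unit times something forcing $\alpha \in \Oseen_K$ — more to the point, $\sigma(a_n)$ is a nonzero algebraic integer, hence $|\sigma(a_n)| \geq 1$ for \emph{at least one} $\sigma$ (indeed $\prod |\sigma(a_n)| \geq 1$ over a suitable conjugate set, so some conjugate has modulus $\geq 1$; one must be slightly careful that $\sigma$ ranges over $\mathrm{Hom}(K)$, but every embedding of $\IQ(a_n)$ extends to $K$, so the maximum over $\sigma \in \mathrm{Hom}(K)$ of $|\sigma(a_n)|$ is $\housealp[a_n] \geq 1$). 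Picking that $\sigma$ and using $\|\vtau_\sigma(\xi)\|^{n} \geq \min\{\|\vtau_\sigma(\xi)\|, \|\vtau_\sigma(\xi)\|^{d-1}\}$ (valid since $1 \leq n \leq d-1$) gives $\housealp \geq \eta(K,\xi)$ after minimising over $\sigma$ as in Definition~\ref{def:etainvariant} — wait, one must match the quantifiers: Lemma~\ref{lem:housebound} gives a $\max_\sigma$ lower bound, while $\eta$ is a $\min_\sigma$; so I would instead argue that for the \emph{particular} good $\sigma$ the displayed bound already dominates $\eta_0(\sigma(M_{\xi,K})) \geq \eta(K,\xi)$, using $|\sigma(a_n)| \geq 1$ and the exponent comparison.

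The remaining case is $n = 0$, i.e. $\alpha = a_0 \in \Oseen$, which contradicts $\alpha \notin \Oseen$ — so this case cannot occur, and that is exactly why the hypothesis $\xi \notin \Oseen_K$ (equivalently, that the reduced $P$ genuinely has positive degree for $\alpha \notin \Oseen$) is needed: if $\xi \in \Oseen_K$ then $\Oseen[\xi] \setminus \Oseen$ could contain elements of $\Oseen_K$ of degree-zero representation relative to a shorter minimal polynomial. Actually the cleaner way to see $n \geq 1$: $\alpha \notin \Oseen$ forces the reduced representative $P$ to have degree $\geq 1$, because degree $0$ would put $\alpha = a_0 \in \Oseen$. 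So the dichotomy collapses and only the $n \geq 1$ branch survives.

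The main obstacle I anticipate is the quantifier bookkeeping between the $\max_\sigma$ in Lemma~\ref{lem:housebound} and the $\min_\sigma$ defining $\eta(K,\xi)$, together with ensuring the "leading coefficient has a conjugate of modulus $\geq 1$" step is airtight when $\sigma$ is constrained to extend to embeddings of $K$ (not just of $\IQ(a_n)$). The resolution is that $\max_{\sigma \in \mathrm{Hom}(K)} |\sigma(a_n)| = \housealp[a_n] \geq 1$ since every complex embedding of $\IQ(a_n)$ extends to $K$; fixing such a $\sigma$ and invoking Lemma~\ref{lem:housebound} for that single $\sigma$ gives $\housealp \geq (1 - d^{3/2}D(\vtau_\sigma(\xi)/\|\vtau_\sigma(\xi)\|)) \|\vtau_\sigma(\xi)\|^{n} \geq \eta_0(\sigma(M_{\xi,K})) \geq \min_{\sigma'} \eta_0(\sigma'(M_{\xi,K})) = \eta(K,\xi)$, where the middle inequality uses $1 \leq n \leq d-1$ so that $\|\vtau_\sigma(\xi)\|^n \geq \min\{\|\vtau_\sigma(\xi)\|,\|\vtau_\sigma(\xi)\|^{d-1}\}$ and the $(1-\cdots)$ factor may be assumed nonnegative (otherwise the bound $\eta(K,\xi) \leq 0 \leq \housealp$ is trivial). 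Taking the infimum over all $\alpha \in \Oseen[\xi]\setminus\Oseen$ completes the proof.
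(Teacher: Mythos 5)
Your proof is correct and follows essentially the same argument as the paper's: reduce $P$ modulo the monic $M_{\xi,K}\in\Oseen[x]$ so that $1\le n<d$, invoke Lemma~\ref{lem:housebound}, pick $\sigma\in\mathrm{Hom}(K)$ with $|\sigma(a_n)|\ge 1$ (possible since $a_n\neq 0$ is an algebraic integer), and use $\|\vtau_\sigma(\xi)\|^n\ge\min\{\|\vtau_\sigma(\xi)\|,\|\vtau_\sigma(\xi)\|^{d-1}\}$. Your explicit handling of the possibly negative prefactor and the $\max_\sigma$ versus $\min_\sigma$ bookkeeping are slightly more careful than the paper's terse wording, but it is the same proof.
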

\begin{proof}
Suppose $\alpha \in \Oseen[\xi]\backslash \Oseen$.
Since $M_{\xi,K}\in \Oseen[x]$ we
can choose a polynomial $P\in \Oseen[x]$ of degree
$1\leq n<d$ such that 
$ \alpha=P(\xi)=a_{0}+ a_{1}\xi+\cdots + a_{n}\xi^{n}$. From Lemma \ref{lem:housebound} we conclude that 
$$\housealp \geq \max_{\sigma\in \mathrm{Hom}(K)}\,
\left(1-d^{3/2}D\left(\frac{\vtau_\sigma(\xi)}{\|\vtau_\sigma(\xi)\|}\right)\right)\,
\left|\sigma(a_{n})\right|\|\vtau_\sigma(\xi)\|^{n}.$$
As $a_{n}\neq 0$ is an algebraic integer there exists $\sigma\in \mathrm{Hom}(K)$
such that $\left|\sigma(a_{n})\right|\geq1$. Noticing that  $\|\vtau_\sigma(\xi)\|^n\geq \min\{\|\vtau_\sigma(\xi)\|,\|\vtau_\sigma(\xi)\|^{d-1}\}$ completes the proof. 
\end{proof}
For any eligible choices of $K$ the hypothesis ensures that 
the minimal polynomials of $\xi$ over $K$, and over the field of fractions of $\Oseen$ are identical.
Hence $\eta(K,\xi)$ is independent of the particular choice of $K$.

\section{Proof of Theorem \ref{thm:Nnumbergeneral}}
With Proposition \ref{prop:housebound} at hand, we can now easily prove Theorem \ref{thm:Nnumbergeneral}.
Recall  that $(\xi_i)_i$ is a sequence of  algebraic integers, $\Oseen_0$ is a ring (containing $1$) of algebraic integers,
$$\Oseen_i=\Oseen_0[\xi_1,\ldots,\xi_i],
\,\, \mathrm{and}
\,\, \Oseen=\bigcup_{i\geq 1} \Oseen_i=\Oseen_0[\xi_1,\xi_2,\xi_3,\ldots].
$$
The field $K_i$ is the field of fractions of $\Oseen_i$, and  
by hypothesis $M_{\xi_i,K_{i-1}}\in \Oseen_{i-1}[x]$. Moreover,  $d_i=[K_{i-1}(\xi_i):K_{i-1}]>1$, and thus $\xi_i\notin \Oseen_{K_{i-1}}$.
From Proposition \ref{prop:housebound} we conclude that
$$\delta_{\house{\cdot}}(\Oseen_{i-1}[\xi_i]\backslash \Oseen_{i-1})\geq \eta(K_{i-1},\xi_i).$$
Applying Lemma \ref{LemmaN}  with $A_i=\Oseen_i$ proves
Theorem \ref{thm:Nnumbergeneral}.

\section{Applications of the equidistribution method}\label{sec:applications}

In this section we discuss a few special cases of Theorem \ref{thm:Nnumbergeneral}. 
Recall that by Northcott's Theorem (cf. Lemma \ref{lem:generalNorthcott}) one has
$\N_{\house{\cdot}}(\Oseen)=\infty$
for any ring of algebraic integers $\Oseen$,
whose field of fractions has finite degree over $\IQ$.
We will use this fact without further notice.

\begin{corollary}\label{example:standardfields}
Let  $(p_{i})_{i\in \IN}$, and $(d_{i})_{i\in \IN}$ be two sequences of primes, and suppose the primes $d_i$ are pairwise distinct.
Let $\xi_i=p_i^{1/d_i}$ be any $d_i$-th root, $\Oseen_i=\IZ[\xi_1,\ldots, \xi_i]$, $K_i$ be the field of fractions of $\Oseen_i$, and
let $\Oseen=\cup_{i\geq 1}\Oseen_i$. 
Then  $\eta(K_{i-1},\xi_i)=\eta(\IQ,\xi_i)=\house{\xi_i}$, and
$\N_{\house{\cdot}}(\Oseen)= \liminf_{i\to \infty} \house{\xi_i}$.
\end{corollary}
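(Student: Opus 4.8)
The plan is to verify the three assertions in turn, with the bulk of the work going into the identity $\eta(K_{i-1},\xi_i)=\eta(\IQ,\xi_i)=\house{\xi_i}$; once this is in place, the last equality follows by combining Theorem \ref{thm:Nnumbergeneral} with its stated trivial upper bound. First I would record the elementary arithmetic facts: $x^{d_i}-p_i$ is irreducible over $\IQ$ by Eisenstein at $p_i$, so $[\IQ(\xi_i):\IQ]=d_i$ and $M_{\xi_i,\IQ}=x^{d_i}-p_i\in\IZ[x]$. Next I must check the hypotheses of Theorem \ref{thm:Nnumbergeneral} are met: that $\N_{\house{\cdot}}(\Oseen_i)=\infty$ (true by Northcott, since $K_i$ has finite degree over $\IQ$), that $d_i>1$ (true, $d_i$ is prime), and that $M_{\xi_i,K_{i-1}}\in\Oseen_{i-1}[x]$. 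For the last point I would argue that $[K_{i-1}(\xi_i):K_{i-1}]=d_i$: the field $K_{i-1}$ has degree dividing $d_1\cdots d_{i-1}$ over $\IQ$, which is coprime to $d_i$ because the $d_j$ are pairwise distinct primes, so $d_i=[\IQ(\xi_i):\IQ]$ must divide $[K_{i-1}(\xi_i):K_{i-1}]\cdot\gcd(\ldots)$; more cleanly, $d_i\mid [K_{i-1}(\xi_i):\IQ]=[K_{i-1}(\xi_i):K_{i-1}][K_{i-1}:\IQ]$ and $\gcd(d_i,[K_{i-1}:\IQ])=1$ force $d_i\mid[K_{i-1}(\xi_i):K_{i-1}]$, while the reverse divisibility is automatic. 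Hence $M_{\xi_i,K_{i-1}}=M_{\xi_i,\IQ}=x^{d_i}-p_i\in\IZ[x]\subset\Oseen_{i-1}[x]$, and by Remark \ref{rem:etainvariant} we get $\eta(K_{i-1},\xi_i)=\eta(\IQ,\xi_i)$.

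The remaining identity $\eta(\IQ,\xi_i)=\house{\xi_i}$ is where the real content lies, and I would compute it via $\eta_0(x^{d_i}-p_i)$ using Remark \ref{rem:etainvariant}: there is only one embedding $\sigma$ of $\IQ$, and the roots of $x^{d_i}-p_i$ are $\alpha_j=p_i^{1/d_i}\zeta_{d_i}^{j}$ for $j=1,\ldots,d_i$, where $p_i^{1/d_i}$ denotes the positive real root. These $d_i$ points are pairwise distinct, all of modulus $p_i^{1/d_i}=\house{\xi_i}$, and equally spaced on the circle of that radius, so after normalising by $\|\valpha\|=p_i^{1/d_i}$ they become a rotation of $\{\zeta_{d_i},\ldots,\zeta_{d_i}^{d_i}\}$; thus $D(\valpha/\|\valpha\|)=0$ by the characterisation of zero discrepancy recorded after Definition \ref{def:Discrepancy}. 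Therefore the factor $\bigl(1-d_i^{3/2}D(\valpha/\|\valpha\|)\bigr)$ equals $1$, and $\eta_0(x^{d_i}-p_i)=\min\{\|\valpha\|,\|\valpha\|^{d_i-1}\}=\min\{p_i^{1/d_i},p_i^{(d_i-1)/d_i}\}$. I would then note $p_i\geq 2$ so $p_i^{1/d_i}\leq p_i^{(d_i-1)/d_i}$ (as $d_i\geq 2$ gives $1\leq d_i-1$), whence the minimum is $p_i^{1/d_i}=\house{\xi_i}$, establishing $\eta(\IQ,\xi_i)=\house{\xi_i}$.

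Finally, for the Northcott number: Theorem \ref{thm:Nnumbergeneral} applies (hypotheses checked above) and yields $\N_{\house{\cdot}}(\Oseen)\geq\liminf_{i\to\infty}\eta(K_{i-1},\xi_i)=\liminf_{i\to\infty}\house{\xi_i}$, while the trivial upper bound stated immediately after Theorem \ref{thm:Nnumbergeneral} — valid since $d_i>1$ forces the $\xi_i$ pairwise distinct — gives $\N_{\house{\cdot}}(\Oseen)\leq\liminf_{i\to\infty}\house{\xi_i}$, so the two bounds coincide. The only mild subtlety I anticipate is the degree computation $[K_{i-1}(\xi_i):K_{i-1}]=d_i$, which hinges essentially on the pairwise-distinctness of the primes $d_i$ (ensuring $\gcd(d_i,[K_{i-1}:\IQ])=1$); everything else is a direct unwinding of the definitions and the already-proved orthogonality-type Lemma \ref{lem: ell two lower bound} packaged inside $\eta$.
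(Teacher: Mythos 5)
Your proof is correct and follows the paper's approach essentially step for step (Eisenstein, tower law, coprimality of the $d_j$, $D=0$ for the equidistributed roots, then Theorem~\ref{thm:Nnumbergeneral} plus the trivial upper bound). One small spot deserves a touch more care: you assert that $[K_{i-1}:\IQ]$ \emph{divides} $d_1\cdots d_{i-1}$, but the tower law only gives $[K_{i-1}:\IQ]=\prod_{j<i}[K_j:K_{j-1}]\leq d_1\cdots d_{i-1}$, and in general a step degree $[K_j:K_{j-1}]$ need not divide $d_j$ (for $x^{d_j}-p_j$ reducible over $K_{j-1}$ the irreducible factor could a priori have degree dividing $d_j-1$). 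The clean fix is to note that each $\IQ(\xi_j)\subset K_{i-1}$ for $j<i$, so each distinct prime $d_j$ divides $[K_{i-1}:\IQ]$, whence $d_1\cdots d_{i-1}\mid[K_{i-1}:\IQ]\leq d_1\cdots d_{i-1}$ and equality holds; this gives $\gcd(d_i,[K_{i-1}:\IQ])=1$ as you want. With that patched, everything else in your argument — the identification $M_{\xi_i,K_{i-1}}=M_{\xi_i,\IQ}=x^{d_i}-p_i$, the invariance $\eta(K_{i-1},\xi_i)=\eta(\IQ,\xi_i)$ via Remark~\ref{rem:etainvariant}, the computation $\eta_0(x^{d_i}-p_i)=\min\{p_i^{1/d_i},p_i^{(d_i-1)/d_i}\}=p_i^{1/d_i}$, and the sandwich between the trivial upper bound and the Theorem's lower bound — matches the paper's proof.
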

\begin{proof}
Let  $\liminf _{i\to \infty}\house{\xi_i}=t\in [1,\infty]$. 
Since the $\xi_i$ are pairwise distinct,  we get $\N_{\house{\cdot}}(\Oseen)\leq t$. 
We derive, by Eisenstein's criterion and the tower-law,
that $d_i=[K_{i-1}(\xi_{i}):K_{i-1}]=[\IQ(\xi_{i}):\IQ]$.
Hence, $M_{\xi_i,K_{i-1}}=M_{\xi_i,\IQ}\in \Oseen_{i-1}[x]$, and $\eta(K_{i-1},\xi_i)=\eta(\IQ,\xi_i)$.
As the  $d_i$ conjugates $\house{\xi_i}\;\zeta_{d_i}^j$ of $\xi_i$ are perfectly equidistributed on the 
circle of radius $\house{\xi_i}$ about the origin, we have $\eta(\IQ,\xi_i)=\house{\xi_i}$.
Thus, Theorem \ref{thm:Nnumbergeneral}
implies $\N_{\house{\cdot}}(\Oseen)\geq t$.
\end{proof}

Next we consider a generalisation of Corollary \ref{example:standardfields}. To reduce clutter we define for each non-zero algebraic number $\xi$
the unordered tuple of normalised conjugates  
$$\vcxi=\left(\frac{\xi_1}{\house{\xi}},\ldots,\frac{\xi_d}{\house{\xi}}\right),$$
where  $d=[\IQ(\xi):\IQ]$ and $\xi_1,\dots,\xi_d$ are the conjugates of $\xi$.
We remind the reader that $D(\vcxi)$ is well-defined as the finite discrepancy $D(\cdot)$ is indifferent to the order of the components. 
Before stating the result let us informally discuss what we would get by applying Theorem \ref{thm:Nnumbergeneral} in the most naive way.

Let $(\gamma_i)_i$ be a sequence of algebraic integers of degree $m_i$, let $\xi_i=\gamma_i^{1/n_i}$
be a $n_i$-th root of $\gamma_i$, and suppose that $[K_{i-1}(\xi_i):K_{i-1}]=m_in_i>1$. Applying Theorem \ref{thm:Nnumbergeneral} yields
$$\N_{\house{\cdot}}(\Oseen)\geq \liminf_{i\rightarrow \infty} \eta(K_{i-1},\xi_i)= \liminf_{i\rightarrow \infty} \eta(\IQ,\xi_i)=\liminf_{i\rightarrow \infty} \house{\xi_i}\left(1-(m_in_i)^{3/2}D\left(\vcxii\right)\right).$$
If $\lim_{i\to \infty} (m_in_i)^{3/2}D\left(\vcxii \right)=0$  then we  conclude that $\N_{\house{\cdot}}(\Oseen)=\liminf_{i \rightarrow \infty} \house{\xi_i}$.
In general the best shot at the former equation we have is Lemma \ref{lem:discrepest1} (further below in this section), which ensures the required condition  provided 
$$\lim_{i\to \infty} m_i^{3/2}n_i^{1/2}D\left(\vcgammai\right)=0.$$
Hence, we need the  normalised conjugates $\vcgammai$ of $\gamma_i$ to converge very rapidly 
to a perfect equidistribution as $i$ gets large.  

However, Theorem \ref{thm:Nnumbergeneral} allows us to consider the conjugates of $\gamma_i$ over $K_{i-1}$
and of $\xi_i=\gamma_{i}^{1/n_i}$ over $K_{i-1}(\gamma_i)$ separately.  Now the conjugates of  $\xi_i$ over $K_{i-1}(\gamma_i)$ 
are perfectly equidistributed and those of $\gamma_i$ over $K_{i-1}$ are much easier to control than those of $\xi_i$ over $\IQ$.
In this way we can relax the above condition to
$$m_i^{3/2}D\left(\vcgammai\right)\leq 1-\house{\gamma_i}^{1/n_i-1}$$
for all sufficiently large $i$.
Here is the precise result.

\begin{corollary}\label{cor:Nnumber}
Let $(\gamma_i)_i$ be a sequence of algebraic integers of respective degree $m_i$ with conjugates $\gamma_1^{(i)},\ldots,\gamma_{m_i}^{(i)}$,
and with  smallest (complex) absolute value  $s_i=\min_{1\leq j\leq m_{i}}|\gamma_{j}^{(i)}|$. Let $(n_i)_i$ be a sequence of integers $>1$, and choose a $n_i$-th root 
$\gamma_i^{1/n_i}$ for each $i\in \IN$.
Suppose that the following four properties hold 
\begin{itemize}
\item $[\IQ(\gamma_1^{1/n_1},\ldots,\gamma_i^{1/n_i}):\IQ]=m_1n_1\cdots m_i n_i$ for each $i\in \IN $.
\item $D(\vcgammai)\leq m_i^{-3/2}(1-\house{\gamma_i}^{1/n_i-1})$ for all but finitely many $i$.
\item $s_i\geq 1$ for all but finitely many $i$.
\item $\displaystyle{\lim_{i\to \infty} \left(\frac{s_i}{\house{\gamma_i}}\right)^{1/n_i}=1}$.
\end{itemize}
With $\Oseen=\IZ[\gamma_i^{1/n_i};  i\in \IN ]$, we have $$\N_{\house{\cdot}}(\Oseen)=\liminf_{i \rightarrow \infty} \house{\gamma_i}^{1/n_i}.$$
\end{corollary}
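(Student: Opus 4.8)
The plan is to deduce Corollary \ref{cor:Nnumber} from Theorem \ref{thm:Nnumbergeneral} by building the tower in two kinds of steps. First I would set $\Oseen_0=\IZ$, and for each $i$ interleave two generators: adjoin $\gamma_i$ (a generator of degree $m_i$), and then adjoin $\xi_i=\gamma_i^{1/n_i}$ (a generator of degree $n_i$ over the previous field). Concretely, relabel the sequence $(\gamma_1,\gamma_1^{1/n_1},\gamma_2,\gamma_2^{1/n_2},\ldots)$ as $(\zeta_1,\zeta_2,\zeta_3,\zeta_4,\ldots)$, with corresponding rings $\Oseen_{2i-1}=\Oseen_{2i-2}[\gamma_i]$ and $\Oseen_{2i}=\Oseen_{2i-1}[\gamma_i^{1/n_i}]$. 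The first multiplicativity hypothesis $[\IQ(\gamma_1^{1/n_1},\ldots,\gamma_i^{1/n_i}):\IQ]=m_1n_1\cdots m_in_i$ guarantees (via the tower law) that at every step the local degree is exactly $m_i$ or $n_i$, hence $>1$, and that the minimal polynomial of $\gamma_i$ over $K_{2i-2}$ equals $M_{\gamma_i,\IQ}$, while the minimal polynomial of $\gamma_i^{1/n_i}$ over $K_{2i-1}$ equals $x^{n_i}-\gamma_i$. Both have coefficients in the respective previous ring (the latter since $\gamma_i\in \Oseen_{2i-1}$), so the integrality hypothesis $M_{\xi_j,K_{j-1}}\in\Oseen_{j-1}[x]$ of Theorem \ref{thm:Nnumbergeneral} is met. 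Each $\Oseen_j$ has finite degree over $\IQ$, so $\N_{\house{\cdot}}(\Oseen_j)=\infty$ by Northcott.

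Next I would evaluate $\eta(K_{j-1},\zeta_j)$ for the two types of step using Remark \ref{rem:etainvariant}, which reduces $\eta(K,\xi)$ to $\eta_0$ of the minimal polynomial since in both cases the minimal polynomial over $K_{j-1}$ coincides with the one over $\IQ$. For the $\gamma_i$-step, $\eta(K_{2i-2},\gamma_i)=\eta(\IQ,\gamma_i)=\min\{\|\vgamma_i\|,\|\vgamma_i\|^{m_i-1}\}(1-m_i^{3/2}D(\vcgammai))$ where $\|\vgamma_i\|=\house{\gamma_i}$. The second hypothesis forces $1-m_i^{3/2}D(\vcgammai)\geq \house{\gamma_i}^{1/n_i-1}$ for large $i$, and $\min\{\house{\gamma_i},\house{\gamma_i}^{m_i-1}\}\geq 1$ for large $i$ (since $\house{\gamma_i}\geq 1$ for an algebraic integer), so $\eta(\IQ,\gamma_i)\geq \house{\gamma_i}^{1/n_i-1}\cdot 1$; actually I need to be a touch more careful — the point is that $\eta(\IQ,\gamma_i)$ times $\house{\gamma_i}^{1/n_i}$-type comparisons are not yet needed, because the $\gamma_i$-steps only need to produce a $\liminf$ that does not drag things below $\liminf\house{\gamma_i}^{1/n_i}$. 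For the $\xi_i$-step, the conjugates of $\gamma_i^{1/n_i}$ over $K_{2i-1}(\gamma_i)$ are $\gamma_i^{1/n_i}\zeta_{n_i}^k$ (for the unique embedding sending $\gamma_i$ to any fixed conjugate $\gamma_j^{(i)}$, its extensions send $\gamma_i^{1/n_i}$ to the $n_i$-th roots of $\gamma_j^{(i)}$, which are perfectly equidistributed on the circle of radius $|\gamma_j^{(i)}|^{1/n_i}$), so $D=0$ and the first factor in $\eta_0$ is $1$. Thus $\eta(K_{2i-1},\gamma_i^{1/n_i})=\min_{j}\min\{|\gamma_j^{(i)}|^{1/n_i},|\gamma_j^{(i)}|^{(n_i-1)/n_i}\}$, and using $s_i\geq 1$ for large $i$ this minimum is $s_i^{1/n_i}$.

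Then I would assemble the $\liminf$. Over the interleaved tower, $\N_{\house{\cdot}}(\Oseen)\geq \liminf_j \eta(K_{j-1},\zeta_j) = \min\{\liminf_i \eta(\IQ,\gamma_i),\ \liminf_i s_i^{1/n_i}\}$. By the fourth hypothesis $s_i^{1/n_i}=(s_i/\house{\gamma_i})^{1/n_i}\house{\gamma_i}^{1/n_i}$, and since $(s_i/\house{\gamma_i})^{1/n_i}\to 1$, we get $\liminf_i s_i^{1/n_i}=\liminf_i \house{\gamma_i}^{1/n_i}=:t$. For the $\gamma_i$-terms I would show $\liminf_i\eta(\IQ,\gamma_i)\geq t$ as well: from the second hypothesis and $\house{\gamma_i}\geq 1$, $\eta(\IQ,\gamma_i)\geq \house{\gamma_i}^{1/n_i-1}\min\{\house{\gamma_i},\house{\gamma_i}^{m_i-1}\}$, and when $\house{\gamma_i}\geq 1$ this is $\geq \house{\gamma_i}^{1/n_i-1}\house{\gamma_i}=\house{\gamma_i}^{1/n_i}$ (the $\min$ is $\house{\gamma_i}$ when $m_i\geq 2$, which holds since $\gamma_i$ has a proper root adjoined so $m_i\geq 1$ — if some $m_i=1$ then $\house{\gamma_i}^{m_i-1}=1$, but then still $\eta(\IQ,\gamma_i)\geq \house{\gamma_i}^{1/n_i-1}\geq \house{\gamma_i}^{1/n_i}$ is false; however $m_i=1$ steps adjoin nothing and can simply be skipped from the tower). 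Hence $\N_{\house{\cdot}}(\Oseen)\geq t$. For the reverse inequality I would invoke the trivial upper bound following Theorem \ref{thm:Nnumbergeneral}: the generators $\gamma_i^{1/n_i}$ are pairwise distinct (their degrees over $\IQ$ grow), so $\N_{\house{\cdot}}(\Oseen)\leq \liminf_i \house{\gamma_i^{1/n_i}}=\liminf_i \house{\gamma_i}^{1/n_i}=t$ (using $\house{\gamma^{1/n}}=\house{\gamma}^{1/n}$ when the minimal polynomial is $x^n-\gamma$). This yields equality.

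The main obstacle I anticipate is the bookkeeping at the two-step-per-index level: verifying that the multiplicativity hypothesis really does give each individual local degree (not just the product), that $x^{n_i}-\gamma_i$ is genuinely irreducible over $K_{2i-1}$ (which is exactly what the degree hypothesis encodes), and that the normalised conjugate discrepancy of $\gamma_i^{1/n_i}$ over $K_{2i-1}(\gamma_i)$ is honestly $0$ for \emph{every} embedding $\sigma$ (one must check the embeddings of $K_{2i-1}(\gamma_i)$ are classified by a choice of conjugate $\gamma_j^{(i)}$, and that the $\|\cdot\|$ used in $\eta_0$ is $|\gamma_j^{(i)}|^{1/n_i}$ for that embedding). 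Everything else — comparing $\min\{R,R^{d-1}\}$ with $1$, passing the $\liminf$ through the $(s_i/\house{\gamma_i})^{1/n_i}\to 1$ factor — is routine.
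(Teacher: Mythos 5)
Your proposal is correct and follows essentially the same route as the paper: relabel the generators into the interleaved tower $(\gamma_1,\gamma_1^{1/n_1},\gamma_2,\gamma_2^{1/n_2},\ldots)$, use the degree hypothesis to pin down both local degrees and both minimal polynomials, compute $\eta$ for each of the two kinds of steps (getting $\geq\house{\gamma_i}^{1/n_i}$ from hypothesis (2) for the $\gamma_i$-steps and $s_i^{1/n_i}$ from perfect equidistribution for the $\gamma_i^{1/n_i}$-steps), pass to the $\liminf$ via hypothesis (4), and close with the trivial upper bound. The only cosmetic difference is that the paper folds the finitely many exceptional indices (where (2) or (3) fail) into $\Oseen_0$ from the start and handles $m_i=1$ by observing $\gamma_i\in\IZ\subset\Oseen_0$, whereas you keep $\Oseen_0=\IZ$ and skip degenerate steps and appeal to the $\liminf$ being insensitive to finitely many terms — both are fine.
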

\begin{proof}
Let $i_0\geq 1$ be an index
such that the second and third condition is satisfied
 for all $i\geq i_0$, and set $\Oseen_0=\IZ[\gamma_i^{1/n_i}; i\leq i_0]$.
For $i\in \IN$ set $\xi_{2i-1}=\gamma_{i+i_0}$ and $\xi_{2i}=\gamma_{i+i_0}^{1/n_{i+i_0}}$.
Since  $n_{i+i_0}>1$ the $\xi_{2i}$ are pairwise distinct. Thus 
$\N_{\house{\cdot}}(\Oseen)\leq \liminf_{i \rightarrow \infty} \house{\xi_{2i}}$.
 
To prove the reversed inequality we let $\Oseen_i=\Oseen_0[\xi_{1},\ldots,\xi_{i}]$, and we write $K_i$ for the field of fractions of $\Oseen_i$. 
The first condition implies that the minimal polynomial 
$M_{\xi_{2i-1},K_{2i-2}}$ has coefficients in $\IZ$, and that 
$$ 
M_{\xi_{2i},K_{2i-1}}=x^{n_{i+i_0}}-\gamma_{i+i_0}=x^{n_{i+i_0}}-\xi_{2i-1}.
$$ 
Hence, 
$M_{\xi_{i},K_{i-1}}\in \Oseen_{i-1}[x]$ for all $i\in \IN $, and $\xi_{2i}\notin \Oseen_{K_{2i-1}}$ as $n_{i+i_0}>1$.
Moreover, if $\xi_{2i-1}\in \Oseen_{K_{2i-2}}$ then $m_{i+i_0}=1$ and thus $\xi_{2i-1}\in \IZ\subset \Oseen_{0}$.
Hence,  we can assume without loss of generality  that $\xi_i$ is not in $\Oseen_{K_{i-1}}$.
Thus we can apply Theorem \ref{thm:Nnumbergeneral},
and it suffices to check that 
$\liminf_{i\rightarrow \infty} \eta(K_{i-1},\xi_i) \geq \liminf_{i \rightarrow \infty} \house{\xi_{2i}}$.
Note that the first condition implies 
$[K_{2i-2}(\xi_{2i-1}):K_{2i-2}]=[\IQ(\xi_{2i-1}):\IQ]$. Hence, 
\begin{alignat*}1
\eta(K_{2i-2},\xi_{2i-1})=\eta(\IQ,\xi_{2i-1})=\house{\gamma_{i+i_0}}\big(1-m_{i+i_0}^{3/2}D\left(\vcgammainull\right)\big)\geq \house{\gamma_{i+i_0}^{1/n_{i+i_0}}}=\house{\xi_{2i}},
\end{alignat*}
using the second hypothesis.

Finally, note that by the first hypothesis each $\sigma\in \Hom(K_{2i-1})$ maps $\gamma_{i+i_0}$ to a conjugate $\gamma_k^{(i+i_0)}$, and thus the $n_{i+i_0}$ extensions $\tau_{j,\sigma}$ of $\sigma$
map $\xi_{2i}$ to the $n_{i+i_0}$ perfectly equidistributed complex numbers  ${\gamma_k^{(i+i_0)}}^{1/n_{i+i_0}}\zeta_{n_{i+i_0}}^l$ with $1\leq l\leq n_{i+i_0}$. 
Therefore, using the third condition, we have for all $i\in \IN$ that
$$\eta(K_{2i-1},\xi_{2i})=\min\left\{s_{i+i_0},s_{i+i_0}^{n_{i+i_0}-1}\right\}^{1/n_{i+i_0}}=s_{i+i_0}^{1/n_{i+i_0}}=\left(\frac{s_{i+i_0}}{\house{\gamma_{i+i_0}}}\right)^{1/n_{i+i_0}} {\house{\gamma_{i+i_0}}}^{1/n_{i+i_0}}=\left(\frac{s_{i+i_0}}{\house{\gamma_{i+i_0}}}\right)^{1/n_{i+i_0}} {\house{\xi_{2i}}},$$
and thus by the fourth hypothesis $\liminf_{i \rightarrow \infty} \eta(K_{2i-1},\xi_{2i})\geq \liminf_{i \rightarrow \infty} \house{\xi_{2i}}$.

\end{proof}

Next, note that Theorem \ref{thm:Nnumbergeneral} requires that all  $\Oseen_i$ have the Northcott property (with respect to the $\house{\cdot}$)
but nonetheless $\Oseen_0$ may be an infinite ring extension of $\IZ$. J. Robinson has shown that the ring of integers of the composite field of all real quadratic
number fields has the Northcott property. Various other examples have been established in \cite{BoZa, CheccoliFehm, CheccoliWidmer,  WidmerPropN}.
Theorem \ref{thm:Nnumbergeneral} allows to extend the Northott property of these rings $\Oseen_0$ to certain bigger rings, 
or alternatively, to create  ring extensions of these $\Oseen_0$ with prescribed finite Northcott number.

\begin{corollary}\label{example:largerings1}
Let $\Oseen_0$ be a ring of algebraic integers, and let $(\xi_i)_i$ be pairwise distinct algebraic integers. Let
$\Oseen_i=\Oseen_0[{\xi_1},\ldots,{\xi_i}]$, let $K_i$ be the field of fractions of $\Oseen_i$, set
$\Oseen=\Oseen_0[\xi_1,\xi_2,\xi_3,\ldots]$, and suppose that $[K_{i-1}(\xi_i):K_{i-1}]=[\IQ(\xi_i):\IQ]=d_i>1$. 
If $\N_{\house{\cdot}}(\Oseen_{K_0})=\infty$, then 

$$\liminf_{i\to \infty} \house{\xi_i}\geq \N_{\house{\cdot}}(\Oseen)
\geq \liminf_{i\to \infty} \house{\xi_i}\left(1-d_i^{3/2}
D\left(\vcxii\right)\right).
$$
In particular,  if $n_i$ are rational integers, and the elements $\xi_i=n_i^{1/d_i}$ 
have degree $d_i>1$ over $K_{i-1}$, then $ \N_{\house{\cdot}}(\Oseen)
= \liminf_{i\to \infty} |n_i^{1/d_i}|$.
\end{corollary}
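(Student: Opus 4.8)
The plan is to deduce Corollary \ref{example:largerings1} directly from Theorem \ref{thm:Nnumbergeneral}, first checking the three hypotheses of that theorem and then simplifying the invariant $\eta(K_{i-1},\xi_i)$ to the stated product. First I would dispose of the upper bound: the $\xi_i$ are pairwise distinct algebraic integers lying in $\Oseen$, so $\{\alpha\in\Oseen;\house{\alpha}<X\}$ is infinite for every $X>\liminf_{i}\house{\xi_i}$, whence $\N_{\house{\cdot}}(\Oseen)\le\liminf_{i}\house{\xi_i}$; this is exactly the trivial upper bound recorded after the statement of Theorem \ref{thm:Nnumbergeneral}.

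For the lower bound I would verify the hypotheses of Theorem \ref{thm:Nnumbergeneral}. Since $[K_i:K_0]=\prod_{j\le i}d_j<\infty$ and $\N_{\house{\cdot}}(\Oseen_{K_0})=\infty$, Remark \ref{rem:sec2}(b) gives $\N_{\house{\cdot}}(\Oseen_{K_i})=\infty$; as $\Oseen_i\subseteq\Oseen_{K_i}$ (and $\Oseen_0\subseteq\Oseen_{K_0}$), and a subset of a set with the Northcott property again has the Northcott property, we get $\N_{\house{\cdot}}(\Oseen_i)=\infty$ for every $i\in\IN_0$. The condition $d_i>1$ is part of the hypothesis. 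Finally, because $[K_{i-1}(\xi_i):K_{i-1}]=[\IQ(\xi_i):\IQ]=d_i$, the monic polynomial $M_{\xi_i,\IQ}\in\IQ[x]$, being monic of degree $d_i$ and vanishing at $\xi_i$, must coincide with $M_{\xi_i,K_{i-1}}$; since $\xi_i$ is an algebraic integer this polynomial lies in $\IZ[x]\subseteq\Oseen_{i-1}[x]$. Thus Theorem \ref{thm:Nnumbergeneral} applies and yields $\N_{\house{\cdot}}(\Oseen)\ge\liminf_{i}\eta(K_{i-1},\xi_i)$.

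It remains to evaluate $\eta(K_{i-1},\xi_i)$. By Remark \ref{rem:etainvariant}, the equality $M_{\xi_i,K_{i-1}}=M_{\xi_i,\IQ}$ forces $\eta(K_{i-1},\xi_i)=\eta(\IQ,\xi_i)=\eta_0(M_{\xi_i,\IQ})$. The roots of $M_{\xi_i,\IQ}$ are the conjugates of $\xi_i$ over $\IQ$, so their maximal modulus is $\house{\xi_i}$ and their normalisation is $\vcxii$; moreover $\house{\xi_i}\ge 1$ as $\xi_i$ is a nonzero algebraic integer, and since $d_i\ge 2$ this gives $\min\{\house{\xi_i},\house{\xi_i}^{d_i-1}\}=\house{\xi_i}$. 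Hence $\eta(K_{i-1},\xi_i)=\house{\xi_i}(1-d_i^{3/2}D(\vcxii))$, which combined with the upper bound proves the displayed two-sided estimate. For the final ``in particular'' assertion, if $\xi_i=n_i^{1/d_i}$ with $n_i\in\IZ$ and $[K_{i-1}(\xi_i):K_{i-1}]=d_i>1$, then the squeeze $d_i=[K_{i-1}(\xi_i):K_{i-1}]\le[\IQ(\xi_i):\IQ]\le\deg(x^{d_i}-n_i)=d_i$ shows $M_{\xi_i,\IQ}=x^{d_i}-n_i$ (so in particular $n_i\ne 0$); its roots are a rotation of the $d_i$-th roots of unity scaled by $|n_i|^{1/d_i}$, hence perfectly equidistributed on the circle of radius $|n_i|^{1/d_i}$. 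Therefore $\house{\xi_i}=|n_i^{1/d_i}|$ and $D(\vcxii)=0$, and the two-sided estimate collapses to $\N_{\house{\cdot}}(\Oseen)=\liminf_{i}|n_i^{1/d_i}|$.

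There is no genuine obstacle here — the statement is a repackaging of Theorem \ref{thm:Nnumbergeneral}. The only points requiring a little care are the bookkeeping that transfers $\N_{\house{\cdot}}(\Oseen_{K_0})=\infty$ down to every $\Oseen_i$ (and to $\Oseen_0$ itself), and the observation that integrality, $\house{\xi_i}\ge 1$, together with $d_i\ge 2$, removes the $\min\{\cdot,\cdot\}$ in the definition of $\eta_0$ and turns it into the clean factor $\house{\xi_i}$.
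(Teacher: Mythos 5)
Your proof is correct and follows essentially the same route as the paper: transfer the Northcott property to each $\Oseen_i$ via Remark \ref{rem:sec2}(b), observe that $M_{\xi_i,K_{i-1}}=M_{\xi_i,\IQ}\in\Oseen_{i-1}[x]$, evaluate $\eta(K_{i-1},\xi_i)=\eta(\IQ,\xi_i)=\house{\xi_i}\bigl(1-d_i^{3/2}D(\vcxii)\bigr)$ using Remark \ref{rem:etainvariant} and $\house{\xi_i}\ge 1$, and apply Theorem \ref{thm:Nnumbergeneral} together with the trivial upper bound. Your extra details (the degree squeeze and the perfect-equidistribution observation in the ``in particular'' case, including the negative $n_i$ case) are sound and merely make explicit what the paper leaves to the reader.
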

\begin{proof}
First note that  by Lemma \ref{lem:generalNorthcott} (cf. Remark \ref{rem:sec2})  we conclude $\N_{\house{\cdot}}(\Oseen_{K_i})=\infty$, and thus $\N_{\house{\cdot}}(\Oseen_i)=\infty$.
Now note that
$$\eta(K_{i-1},\xi_i)=\eta(\IQ,\xi_i)=\house{\xi_i}\left(1-d_i^{3/2}D\left(\vcxii\right)\right),$$
and $M_{\xi_i,K_{i-1}}=M_{\xi_i,\IQ}$.
Hence, the claim follows from Theorem \ref{thm:Nnumbergeneral}.
\end{proof}

We end this section with some simple estimates for the finite discrepancy, one of which (Lemma \ref{lem:discrepest1}) has already been used in the paragraph
after the proof of Corollary \ref{example:standardfields} to motivate Corollary \ref{cor:Nnumber}. We believe they might also be useful 
for further applications of Theorem \ref{thm:Nnumbergeneral}. 
We first introduce a slight refinement of the finite discrepancy. For $\vxi\in \IC^d$ and $\u\in \IT$
we set 
\begin{alignat*}1
D_\u(\vxi)=\max_{1\leq j\leq d}\min_{1\leq i\leq d}|\xi_i-\u\zeta_d^j|,
\end{alignat*}
so that
\begin{alignat}1\label{eq:discrepinf}
D(\vxi)=\displaystyle{\inf_{\u\in \IT}D_\u(\vxi)}.
\end{alignat}
For fixed $\vxi\in \IC^d$ the function $D_\u(\vxi)$ is continuous in $\u$. Since $\IT$ is compact if follows 
that there exists $\u\in \IT$ such that
\begin{alignat}1\label{eq:discrepspec}
D(\vxi)=D_\u(\vxi). 
\end{alignat}

\begin{lemma}\label{lem:discrepest1}
Let $n,m\in\IN$. Let $\vec{\alpha}=(\alpha_1,\ldots,\alpha_m)\in \IC^m$,  and  suppose that $D\left({\vec{\alpha}}\right)\leq 1/2$. Then we have
$$D\left(\left(\left({\alpha_k}\right)^{1/n}\zeta_n^\ell\right)_{1\leq k\leq m  \atop{1\leq \ell \leq n}}\right)\leq \frac{2}{n}D\left({\vec{\alpha}}\right).$$
\end{lemma}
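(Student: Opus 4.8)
The statement concerns the discrepancy of the tuple obtained by taking all $n$-th roots of each $\alpha_k$ (with all the roots of unity $\zeta_n^\ell$). The natural strategy is to start from an almost-optimal rotation for $\vec\alpha$ and transport it through the $n$-th root map. Concretely, by \eqref{eq:discrepspec} choose $\u\in\IT$ with $D(\vec\alpha)=D_\u(\vec\alpha)$, so that for each $k$ there is an index $j(k)\in\{1,\dots,m\}$ with $|\alpha_k-\u\zeta_m^{j(k)}|\leq D(\vec\alpha)$; equivalently, each $\alpha_k$ lies within distance $D(\vec\alpha)$ of one of the equidistributed points $\u\zeta_m^1,\dots,\u\zeta_m^m$ on the unit circle. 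Pick $\v\in\IT$ an $n$-th root of $\u$. Then $\{\v\zeta_{mn}^r : 1\le r\le mn\}$ is the perfectly equidistributed set of size $mn$, and one checks that each of these points is an $n$-th root of one of the $\u\zeta_m^j$. The plan is to match each target point $\v\zeta_{mn}^r$ (reachable as $(\u\zeta_m^{j(k)})^{1/n}\zeta_n^\ell$ for suitable $\ell$) to a specific root $(\alpha_k)^{1/n}\zeta_n^{\ell'}$, and to bound the distance; this will give an admissible rotation for the big tuple, hence an upper bound on $D$ of the big tuple, which is what we need.

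\textbf{Key estimate.} The heart of the matter is the following: if $w\in\IT$ and $|z-w|\le\delta\le 1/2$, then each of the $n$-th roots of $z$ lies close to one of the $n$-th roots of $w$. Write $z^{1/n}$ for a chosen root; then $|z^{1/n} - w^{1/n}\zeta_n^{\ell}| $ is small for the appropriate $\ell$. Quantitatively, since $|z|\le 1+\delta$ and $|z|\ge 1-\delta\ge 1/2>0$, we have $|z^{1/n}|\in[(1-\delta)^{1/n},(1+\delta)^{1/n}]$, and $||z^{1/n}|-1|\le \frac{1}{n}\cdot\frac{\delta}{1-\delta}\le \frac{2\delta}{n}$ using $1-\delta\ge 1/2$ together with the elementary inequality $|x^{1/n}-1|\le\frac1n|x-1|$ for $x\ge 1$ (and a matching bound for $x\le 1$, $x\ge 1/2$). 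For the argument, if $z=|z|e^{i\phi}$ and $w=e^{i\psi}$ with $|\phi-\psi|$ chosen minimal, then $|z-w|\ge 2\sin(|\phi-\psi|/2)\ge \frac{2}{\pi}|\phi-\psi|$ when $|\phi-\psi|\le\pi$, so $|\phi-\psi|\le \frac{\pi}{2}\delta$; dividing by $n$, the arguments of the corresponding $n$-th roots differ by at most $\frac{\pi\delta}{2n}$, giving an arc-length (hence chordal) discrepancy in the angular direction of at most about $\frac{\pi\delta}{2n}$. Combining the radial contribution $\le \frac{2\delta}{n}$ and the angular contribution $\le\frac{\pi\delta}{2n}$ via the triangle inequality yields $|z^{1/n}-w^{1/n}\zeta_n^\ell|\le \frac{C\delta}{n}$ for an absolute constant $C$; the claimed constant $2$ presumably comes from a sharper bookkeeping (e.g. estimating $|z^{1/n}e^{i\theta/n}-e^{i\theta/n}\cdot 1|$ directly as $|z^{1/n}-1|\le\frac{2\delta}{n}$ after first rotating so that $w=1$, and noting that the rotation by $e^{i(\psi-\phi)}$ of the unit circle is an isometry so the angular part is absorbed once one observes $|z-w|=|ze^{i(\psi-\phi)}-1|\ge ||z|e^{\text{(small)}}-1|$). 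I would aim to write the clean version: reduce to $w=1$ by the isometry $x\mapsto \bar w^{1/n}x$ acting on roots and commuting with multiplication by $\zeta_n$, then prove $|z^{1/n}-1|\le\frac2n|z-1|$ for $|z-1|\le 1/2$ directly.

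\textbf{Assembling the bound.} Applying the key estimate with $\delta=D(\vec\alpha)$, $w=\u\zeta_m^{j(k)}$, $z=\alpha_k$: for each $k$ and each $\ell$, the root $(\alpha_k)^{1/n}\zeta_n^\ell$ is within $\frac{2}{n}D(\vec\alpha)$ of a unique point of the form $\v\zeta_{mn}^{r}$ (namely the corresponding $n$-th root of $\u\zeta_m^{j(k)}$ twisted by $\zeta_n^\ell$). Conversely, I need that every target point $\v\zeta_{mn}^r$ is hit: since $\{\u\zeta_m^j\}_j$ are distinct and each contributes exactly $n$ of the $mn$ roots-of-unity multiples, and the $\alpha_k$ are matched to the $\u\zeta_m^j$ (possibly many-to-one, but that is fine — we only need the $\max_r\min$ to be small, which only requires that for each target $r$ there is \emph{some} source within distance $\frac2n D(\vec\alpha)$), we get that $D_{\v}$ of the big tuple is $\le \frac2n D(\vec\alpha)$, and hence $D$ of the big tuple is $\le\frac2n D(\vec\alpha)$ by \eqref{eq:discrepinf}. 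The surjectivity-onto-targets point is the one place to be slightly careful: it uses that the map $j\mapsto$ (its $n$ roots) from $\{1,\dots,m\}$ to the $mn$-th roots of unity is a bijection onto, which holds because $\v$ was chosen as a fixed $n$-th root of $\u$ and the $n$-th roots of $\u\zeta_m^j$ are exactly $\v\zeta_{mn}^{j + m\ell}$, $0\le\ell<n$, and these ranges are disjoint as $j$ varies over a set of representatives mod $m$.

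\textbf{Main obstacle.} The genuine technical content is entirely in the key one-variable estimate $|z^{1/n}-1|\le\frac2n|z-1|$ for $|z-1|\le1/2$ (with the matching statement that the angular discrepancy of $n$-th roots scales like $1/n$), together with getting the constant down to exactly $2$ rather than some larger absolute constant; everything else is combinatorial bookkeeping of indices and an appeal to \eqref{eq:discrepspec} and \eqref{eq:discrepinf}. I would expect to spend most of the write-up making that scalar estimate precise — in particular handling the split into $|z|\ge1$ and $1/2\le|z|\le1$, and bounding $2\sin(t/2)$ from below by $\frac{2t}{\pi}$ on $[0,\pi]$ to convert the chordal hypothesis $|z-1|\le\delta$ into an angular bound before dividing by $n$ — and then verifying that the constants from the radial and angular parts really combine to at most $\frac2n\delta$ rather than, say, $\frac{3}{n}\delta$, which may require choosing the matching more cleverly or using the stronger inequality $|z^{1/n}-1|^2 = |z^{1/n}|^2 - 2|z^{1/n}|\cos(\arg z/n) + 1$ and optimizing.
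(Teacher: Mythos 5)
Your high-level plan is exactly the paper's: pick an optimal rotation $\u\in\IT$ with $D_{\u}(\vec{\alpha})=D(\vec{\alpha})$, use $\u^{1/n}$ as the candidate rotation for the large tuple, and observe that the $n$-th roots of the reference points $\u\zeta_m^k$ ($1\le k\le m$), taken all together, are precisely $\{\u^{1/n}\zeta_{mn}^r : 1\le r\le mn\}$; your index bookkeeping for that tiling is correct and matches the paper's. The genuine gap is the step you flag yourself as the ``main obstacle'': the scalar estimate that an $n$-th root moves a point at distance $\delta\le\frac12$ from the unit circle to a point at distance at most $\frac{2\delta}{n}$ from the corresponding $n$-th root. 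Your radial-plus-angular triangle-inequality decomposition does not give the constant $2$: the radial term already costs up to $\frac{2}{n}|z-1|$ (since $|z|$ may be as small as $\frac12$), and the angular term $|e^{i\arg(z)/n}-1|$ contributes, after converting chord to arc, on the order of another $\frac{1.5}{n}|z-1|$, so you land above $\frac{3}{n}|z-1|$. You correctly state the target inequality $|z^{1/n}-1|\le\frac2n|z-1|$ and say you would ``prove it directly,'' but no proof is produced, and the decomposition you sketch would not prove it.

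The paper closes this gap with a short argument that treats modulus and argument at once via the generalized binomial series, which is the missing idea. Writing $\alpha_k=\u\zeta_m^k+\delta e^{i\varphi}$ with $0\le\delta\le D(\vec{\alpha})\le\frac12$, one has $\alpha_k^{1/n}=\zeta_{mn}^{k+sm}\u^{1/n}(1+\delta e^{i\varphi'})^{1/n}$ for some integer $s$ and real $\varphi'$, and then
\[
\bigl|(1+\delta e^{i\varphi'})^{1/n}-1\bigr|\le\sum_{r\ge 1}\left|\binom{1/n}{r}\right|\delta^r\le\frac{1}{n}\sum_{r\ge 1}\delta^r=\frac{1}{n}\cdot\frac{\delta}{1-\delta}\le\frac{2\delta}{n},
\]
using the elementary bound $\bigl|\binom{1/n}{r}\bigr|\le\frac1n$ for every $r\ge1$ in the middle step, and $\delta\le\frac12$ in the last. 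This is precisely where the hypothesis $D(\vec{\alpha})\le\frac12$ is consumed. With this estimate in hand, your assembly paragraph goes through verbatim and gives the stated bound via \eqref{eq:discrepinf} and \eqref{eq:discrepspec}.
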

Note that the left hand-side is independent of the particular choice of the $n$-th root.

\begin{proof}
By (\ref{eq:discrepspec}) we can choose $\u\in \IT$ such that $D_\u\left({\vec\alpha}\right)=D\left({\vec\alpha}\right)$.
By (\ref{eq:discrepinf}) it suffices to prove that
\begin{equation}\label{discrepancy-eta-ineq1}
D_{\u^{1/n}}
\left(\left(\left({\alpha_k}
\right)^{1/n}\zeta_n^\ell\right)_{1\leq k\leq m  \atop{1\leq \ell \leq n}}\right)\leq 
\frac{2}{n}D_\u\left({\vec\alpha}\right).
\end{equation}

After  reordering the coordinates of $\valpha$, there exists 
for each $1\leq k \leq m$
a real number $\delta$ satisfying $0\leq \delta \leq D_\u(\valpha)\leq \frac{1}{2}$, and a real number $\varphi$ such that 
\begin{equation*}
{\alpha_k} = \zeta_m^k \u + \delta e^{i\varphi}.
\end{equation*}
Thus we obtain

\begin{equation*}
{\alpha_k}^{1/n} = (\zeta_m^k \u + \delta e^{i\varphi})^{1/n}=\zeta_{mn}^{k+sm}\; \u^{1/n}\;(1+\delta e^{i\varphi'})^{1/n},
\end{equation*}

\noindent for some integer $1\leq s\leq n$ and some real number $\varphi'$. The generalised binomial theorem implies for any complex number $z$,
with $\left|z\right|<1$, the Taylor expansion
\[
(1+z)^{1/n}=1+\sum_{r\geq1}\begin{pmatrix}1/n\\
r
\end{pmatrix}z^{r}\quad\mathrm{where}\quad\begin{pmatrix}1/n\\
r
\end{pmatrix}=\frac{\frac{1}{n}\left(\frac{1}{n}-1\right)\ldots\left(\frac{1}{n}-(r-1)\right)}{r!}\,\,\mathrm{for}\,r\geq1.
\]
Further,
\[
\left|\begin{pmatrix}1/n\\
r
\end{pmatrix}\right|=\frac{\frac{1}{n}}{1}\frac{\left(1-\frac{1}{n}\right)}{2}\ldots\frac{(r-1)-\frac{1}{n}}{r}\leq\frac{1}{n}\quad(r\geq1).
\]
Consequently, 
\[
\left|(1+\delta e^{i\varphi'})^{1/n}-1\right|\leq\frac{1}{n}\sum_{r\geq1}\delta^{r}=\frac{1}{n}\frac{\delta}{(1-\delta)}\leq \frac{2\delta}{n}.
\]

Finally, note that for any fixed $1\leq s\leq n$, the expression $\zeta_{mn}^{k+sm} \u^{1/n} \zeta_n^\ell = \zeta_{mn}^{k+(\ell+ s) m} \u^{1/n}$, for $1\leq k \leq m$ and $1\leq \ell \leq n$ ranges over all elements $\zeta_{nm}^r \u^{1/n}$ for $1\leq r\leq mn$. This leads to (\ref{discrepancy-eta-ineq1}), hence gives the result.

\end{proof}

\begin{lemma}\label{lem:discrepest2}
Let $n,m\in\IN$ be coprime, let $\vec{\alpha}=(\alpha_{1},\ldots,\alpha_{m})\in \IC^m$, $\vec{\beta}=(\beta_{1},\ldots,\beta_{n})\in \IC^n$.
Then we have
$$D\left((\alpha_k\beta_{l})_{1\leq k\leq m \atop {1\leq l\leq n}}\right)\leq (1+D(\vec{\alpha}))(1+D(\vec{\beta}))-1.$$
\end{lemma}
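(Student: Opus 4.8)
The plan is to compare the tuple $(\alpha_k\beta_l)_{k,l}$ with a perfectly equidistributed set of $mn$ points on the unit circle, obtained as a product of the optimal near-equidistributed circles for $\vec\alpha$ and $\vec\beta$. First I would invoke \eqref{eq:discrepspec} to fix $u\in\IT$ with $D_u(\vec\alpha)=D(\vec\alpha)$ and $v\in\IT$ with $D_v(\vec\beta)=D(\vec\beta)$. By definition of $D_u$, after relabelling we may write $\alpha_k=u\zeta_m^k+\delta_k e^{i\varphi_k}$ with $0\le\delta_k\le D(\vec\alpha)$, and similarly $\beta_l=v\zeta_n^l+\epsilon_l e^{i\psi_l}$ with $0\le\epsilon_l\le D(\vec\beta)$. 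Since $\gcd(m,n)=1$, the products $\zeta_m^k\zeta_n^l=\zeta_{mn}^{kn+lm}$ range, as $(k,l)$ runs over $\{1,\dots,m\}\times\{1,\dots,n\}$, over all $mn$-th roots of unity $\zeta_{mn}^r$, $1\le r\le mn$; hence $(uv\zeta_m^k\zeta_n^l)_{k,l}$ is exactly the perfectly equidistributed tuple $(uv\,\zeta_{mn}^r)_r$, which is a legitimate competitor $\phi(z)=(uv)z$ in the infimum defining $D$.

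The core estimate is then the pointwise bound on $\bigl|\alpha_k\beta_l-uv\zeta_m^k\zeta_n^l\bigr|$. Writing $a=u\zeta_m^k$, $b=v\zeta_n^l$ (both on the unit circle), $\alpha_k=a+r_1$, $\beta_l=b+r_2$ with $|r_1|=\delta_k\le D(\vec\alpha)$ and $|r_2|=\epsilon_l\le D(\vec\beta)$, one computes
\[
\alpha_k\beta_l-ab=(a+r_1)(b+r_2)-ab=ar_2+br_1+r_1r_2,
\]
so by the triangle inequality and $|a|=|b|=1$,
\[
|\alpha_k\beta_l-ab|\le |r_2|+|r_1|+|r_1||r_2|\le D(\vec\alpha)+D(\vec\beta)+D(\vec\alpha)D(\vec\beta)=(1+D(\vec\alpha))(1+D(\vec\beta))-1.
\]
Since this holds for every pair $(k,l)$, and the points $uv\zeta_m^k\zeta_n^l$ are precisely the $mn$ points $\phi(\zeta_{mn}^r)$ for the rotation $\phi(z)=uvz$, we get
\[
D\bigl((\alpha_k\beta_l)_{k,l}\bigr)\le \max_{1\le r\le mn}\min_{k,l}|\alpha_k\beta_l-uv\zeta_{mn}^r|\le \max_{k,l}|\alpha_k\beta_l-uv\zeta_m^k\zeta_n^l|\le (1+D(\vec\alpha))(1+D(\vec\beta))-1,
\]
where in the middle step, for each $r$ we pick the unique $(k,l)$ with $\zeta_m^k\zeta_n^l=\zeta_{mn}^r$ and bound its contribution by the displayed pointwise estimate. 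This is the desired inequality.

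I do not expect a genuine obstacle here; the argument is short once the coprimality is used correctly. The one point requiring a little care is the combinatorial bijection: that $\{(kn+lm \bmod mn): 1\le k\le m,\ 1\le l\le n\}=\{1,\dots,mn\}$, which is the Chinese Remainder Theorem statement that the map $(k,l)\mapsto kn+lm$ induces a bijection $\IZ/m\times\IZ/n\to\IZ/mn$; this is exactly where $\gcd(m,n)=1$ enters, and it mirrors the corresponding step near the end of the proof of Lemma \ref{lem:discrepest1}. A second minor point is that the relabellings of $\vec\alpha$ and $\vec\beta$ achieving the $D_u$- and $D_v$-optimal matchings are independent, and combining them is harmless because $D(\cdot)$ is permutation-invariant. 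Everything else is the two-line product expansion above.
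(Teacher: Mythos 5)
Your proof is correct and coincides with the paper's own argument in every essential respect: both fix optimal rotations $u,v$ (the paper's $\u,\u'$) via \eqref{eq:discrepspec}, test the rotation $\phi(z)=uvz$, expand the product $(u\zeta_m^k+r_1)(v\zeta_n^l+r_2)$ to get the pointwise error bound $D(\vec\alpha)+D(\vec\beta)+D(\vec\alpha)D(\vec\beta)$, and invoke the Chinese Remainder Theorem to see that $\zeta_m^k\zeta_n^l=\zeta_{mn}^{kn+lm}$ sweeps out all $mn$-th roots of unity. The only differences are notational.
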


\begin{proof}
By (\ref{eq:discrepspec}) we can choose $\u, \u'\in \IT$ such that $D_\u\left({\vec\alpha}\right)=D\left({\vec\alpha}\right)$ and $D_{\u'}\left({\vec\beta}\right)=D\left({\vec\beta}\right)$. By (\ref{eq:discrepinf}) it suffices to prove that
\begin{equation}\label{discrepancy-eta-ineq2}
D_{\u\u'}((\alpha_k\beta_{\ell})_{1\leq k\leq m \atop {1\leq \ell\leq n}}) \leq (1+D_{\u}(\vec \alpha))(1+D_{\u'}(\vec \beta))-1.
\end{equation}
After reordering the coordinates of $\valpha$ and $\vbeta$ we can write
$\alpha_{k}=\zeta_{m}^{k}\u+\delta_{k}e^{i\varphi_k}$ and $\beta_{\ell}=\zeta_{n}^{\ell}\u'+\delta_{\ell}'e^{i\varphi_\ell'}$
for certain non-negative real  $\delta_{k},\delta_{\ell}'$ and $\varphi_{k}, \varphi_{\ell}'$ satisfying
$0\leq \delta_{k}\leq D_{\u}(\vec \alpha),\, 0\leq \delta_{\ell}'\leq D_{\u'}(\vec \beta)$.

Further
\[
(\zeta_{m}^{k}\u+\delta_{k}e^{i\varphi_k})(\zeta_{n}^{\ell}\u'+\delta_{\ell}'e^{i\varphi_\ell'})=\zeta_{mn}^{kn+\ell m}\u\u'+\epsilon_{k, \ell}
\]
where $|\epsilon_{k, \ell}|\leq \delta_{k}
+\delta_{\ell}'+\delta_{k}\delta_{\ell}'
\leq (1+D_{\u}(\valpha))(1+D_{\u'}(\vbeta))-1$. 
By coprimality of $m$ and $n$, the exponent $kn+\ell m$ 
represents all residue classes modulo $mn$ as $1\leq k \leq m$
and $1\leq\ell\leq n$. This proves  (\ref{discrepancy-eta-ineq2}), and hence
the lemma.
\end{proof}

\section{From arbitrary rings of algebraic integers to integrally closed subrings}

In this section we record two classical results that provide a strategy to show that 
a ring of algebraic numbers is integrally closed, i.e., is the ring of integers of a field.

In the first step we need to identify when a given algebraic integer $\theta$ of $K$ generates
the ring of integers $\Oseen_K$ over $\IZ$. There are at least three  such criteria in the literature - Dedekind's, Uchida's
and L\"uneburg's criterion (cf. \cite{VV21}).
For our purpose Dedekind's classical criterion is well suited (cf. \cite[Theorem 1.1]{MunishKhanduja}).

\begin{lemma}[Dedekind's criterion]
\label{lem: dedekind's criterion}Let $\theta$ be an algebraic integer,
and let $M_{\theta,\IQ}$ be its minimal polynomial over $\IQ$. Let $K=\mathbb{Q}(\theta)$, and 
let $q$ be a rational prime. Let $\overline{M_{\theta,\IQ}}$ 
be the reduction of $M_{\theta,\IQ}$ modulo q and 
$\overline{M_{\theta,\IQ}}=\varphi_{1}^{e_{1}}\ldots\varphi_{k}^{e_{k}}$
be the decomposition of $\overline{M_{\theta,\IQ}}$ into irreducible factors over
the ring of polynomials $\mathbb{F}_{q}[x]$ over the field with $q$
elements. Let $\mu_{i}, g\in\mathbb{Z}[x]$ be such that
\[
M_{\theta,\IQ}=\mu_{1}^{e_{1}}\ldots\mu_{k}^{e_{k}}+qg
\]
and $\overline{\mu}_{i} = \varphi_{i}$ for all $i\leq k$.
The following are equivalent:
\begin{enumerate}
\item The prime $q$ does not divide $[\Oseen_{K}:\mathbb{Z}[\theta]]$.
\item For all $i\leq k$, either $e_{i}=1$, or $\varphi_i\nmid\overline{g}$
in $\mathbb{F}_{q}[x]$.
\end{enumerate}
\end{lemma}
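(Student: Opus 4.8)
The statement is classical, so the plan is to recall the standard module-theoretic argument rather than invent something new. Write $A=\mathbb{Z}[\theta]$, $B=\Oseen_K$, and work after localisation at the prime $q$; equivalently, it suffices to prove the analogous statement about $A_q=\IZ_{(q)}[\theta]$ inside $B_q=(\Oseen_K)_{(q)}$, since $q\nmid[B:A]$ iff $B_q=A_q$. First I would set $\overline{A}=A/qA=\mathbb{F}_q[x]/(\overline{M_{\theta,\IQ}})$ and observe that $A_q=B_q$ is equivalent to $A_q$ being integrally closed, which (for a one-dimensional Noetherian local-ish situation, or more precisely for the semilocalisation at the primes above $q$) amounts to $A_q$ being regular at each prime lying over $q$. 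The primes of $A$ above $q$ correspond to the irreducible factors $\varphi_i$ of $\overline{M_{\theta,\IQ}}$.

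The key computation is to test regularity of $A$ at the maximal ideal $\mathfrak{m}_i=(q,\mu_i(\theta))$ corresponding to $\varphi_i$. I would show: $A$ is regular at $\mathfrak{m}_i$ if and only if either $e_i=1$, or $e_i\ge 2$ and $\varphi_i\nmid \overline{g}$. The mechanism is that the maximal ideal $\mathfrak m_i A_{\mathfrak m_i}$ is generated by the two elements $q$ and $\mu_i(\theta)$, and one computes the conormal module $\mathfrak m_i/\mathfrak m_i^2$ (a vector space over the residue field $\mathbb F_q[x]/(\varphi_i)$) to be one-dimensional exactly in the stated cases: using $M_{\theta,\IQ}=\prod_j\mu_j^{e_j}+qg$, one has modulo $\mathfrak m_i^2$ the relation $0=M_{\theta,\IQ}(\theta)\equiv (\text{unit})\cdot\mu_i(\theta)^{e_i}+q\,g(\theta)$, so when $e_i\ge 2$ the element $q$ becomes expressible in terms of $\mu_i(\theta)^2$ and $q\,g(\theta)$; whether this collapses the conormal module to dimension one is controlled precisely by whether $\overline g$ is divisible by $\varphi_i$. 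When $e_i=1$ the prime $\mathfrak m_i$ is automatically of height one with principal maximal ideal, hence regular. Assembling over all $i$ gives: $A_q$ is integrally closed $\iff$ condition (2) holds $\iff$ $q\nmid[B:A]$.

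Alternatively — and this is probably the cleanest route to write down — I would use the description of $B_q/A_q$ via the Dedekind different or via the standard "conductor" computation: $q\mid[B:A]$ iff the reduction $\overline A$ fails to be reduced in a way not accounted for, i.e. iff there is $i$ with $e_i\ge 2$ and $\varphi_i\mid \overline g$. Concretely one exhibits, when $e_i\ge 2$ and $\varphi_i\mid\overline g$, an explicit element $\tfrac1q\,\mu_i(\theta)^{e_i}\big(\prod_{j\ne i}\mu_j(\theta)^{e_j}\big)$-type expression (using $M_{\theta,\IQ}=\prod\mu_j^{e_j}+qg$ and $\varphi_i\mid\overline g$) that lies in $B_q\setminus A_q$, proving (1) fails; conversely, if condition (2) holds one checks directly that $A_q$ is integrally closed by verifying it is a product of DVRs, one for each $\varphi_i$, using that $\mu_i(\theta)$ is a uniformiser.

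\textbf{Main obstacle.} The genuine content is the conormal/regularity computation at $\mathfrak m_i$ when $e_i\ge 2$: one must track carefully the interplay between the factorisation of $\overline{M_{\theta,\IQ}}$, the lift $\prod\mu_j^{e_j}+qg$, and the square of the maximal ideal, and get the dichotomy $\varphi_i\mid\overline g$ versus $\varphi_i\nmid\overline g$ exactly right — including checking that distinct $\varphi_i$ do not interfere (they are coprime, so the localisations decouple, but this needs a word). Everything else (reduction to the localised statement, $q\nmid[B:A]\iff A_q$ integrally closed $\iff$ $A_q$ a product of DVRs) is routine commutative algebra. Since the paper explicitly cites \cite[Theorem 1.1]{MunishKhanduja} for this lemma, the honest move is to state that the proof is given there and only indicate the module-theoretic skeleton above; I would not reproduce the full calculation.
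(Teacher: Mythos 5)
The paper itself gives no proof of this lemma: it is stated and the reference \cite[Theorem 1.1]{MunishKhanduja} is cited, exactly as you anticipate in your final sentence. Your sketch of the standard argument --- localising at $q$, matching primes of $\mathbb{Z}[\theta]$ above $q$ with the irreducible factors $\varphi_i$, and deciding regularity at $\mathfrak m_i=(q,\mu_i(\theta))$ by reading off the conormal module from $0=\prod_j\mu_j(\theta)^{e_j}+qg(\theta)$ --- is correct, and your handling of the two cases ($e_i=1$ makes $\mathfrak m_i$ principal; $e_i\ge 2$ gives $q\in\mathfrak m_i^2$ iff $\varphi_i\nmid\overline g$) is the right dichotomy. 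Since the paper supplies no proof to compare against, there is nothing to contrast; your closing judgement that one should simply cite the reference matches the authors' choice.
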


Next we require  a criterion to factor the ring of integers of a compositum 
into a product of rings of integers.
If $\Oseen$ is an order  of a number field $K$ then we write 
$\Delta_{\Oseen}$ for the discriminant
of that order, and just $\Delta_K$ if $\Oseen=\Oseen_K$ is the maximal order.  
For subfields $K$ and $F$ of $\Qbar$
we write $KF$ for their composite field.
The next statement can be found in \cite[Proposition III.2.13]{FroTay}.
\begin{lemma}\label{lem:compositeringofintegers}
If $F,K$ are number fields and  linearly disjoint  over $\mathbb{Q}$,
with $(\Delta_{F},\Delta_{K})=1$, then $\Oseen_{KF}=\Oseen_{K}\cdot \Oseen_{F}$
where the right hand side denotes
the smallest subring of $\Oseen_{\Qbar}$
containing $\Oseen_{K}$ and $\Oseen_{F}$.
\end{lemma}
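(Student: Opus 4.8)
The plan is to realise $\Oseen_K\cdot\Oseen_F$ concretely as a full-rank $\IZ$-lattice inside $\Oseen_{KF}$ and to bound its index from two sides using the discriminants of $F$ and of $K$ separately. Write $m=[K:\IQ]$ and $n=[F:\IQ]$, and fix $\IZ$-bases $\alpha_1,\dots,\alpha_m$ of $\Oseen_K$ and $\beta_1,\dots,\beta_n$ of $\Oseen_F$. Linear disjointness over $\IQ$ means that the natural map $K\otimes_\IQ F\to KF$ is an isomorphism of fields; in particular $[KF:\IQ]=mn$, the products $\alpha_i\beta_j$ form a $\IQ$-basis of $KF$, and $\beta_1,\dots,\beta_n$ is a $K$-basis of $KF$. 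Since $\Oseen_K$ and $\Oseen_F$ are rings, the smallest subring $M:=\Oseen_K\cdot\Oseen_F$ of $\Oseen_{\Qbar}$ containing both is exactly the $\IZ$-span of the $\alpha_i\beta_j$; hence $M$ is a sublattice of $\Oseen_{KF}$ of full rank $mn$, and $[\Oseen_{KF}:M]$ is finite.

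The key step is to show $\Delta_F\cdot\Oseen_{KF}\subseteq M$. Given $x\in\Oseen_{KF}$, write $x=\sum_{j=1}^n\gamma_j\beta_j$ with $\gamma_j\in K$. Apply the $K$-linear map $\mathrm{Tr}_{KF/K}$ to $x\beta_k$: because $KF$ is the base change $K\otimes_\IQ F$, multiplication by an element of $F$ on $KF$ over $K$ has the same characteristic polynomial (with coefficients in $\IQ\subseteq K$) as multiplication on $F$ over $\IQ$, so $\mathrm{Tr}_{KF/K}(\beta_j\beta_k)=\mathrm{Tr}_{F/\IQ}(\beta_j\beta_k)\in\IZ$. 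This yields the linear system $\sum_{j=1}^n t_{jk}\gamma_j=c_k$ with $t_{jk}=\mathrm{Tr}_{F/\IQ}(\beta_j\beta_k)\in\IZ$ and $c_k=\mathrm{Tr}_{KF/K}(x\beta_k)$, where $c_k\in\Oseen_K$ since the trace of an algebraic integer is an algebraic integer lying in $K$. The matrix $T=(t_{jk})$ satisfies $\det T=\Delta_F\neq 0$ and has integer adjugate, so Cramer's rule gives $\Delta_F\gamma_j\in\Oseen_K$ for all $j$, whence $\Delta_F x=\sum_j(\Delta_F\gamma_j)\beta_j\in\sum_j\Oseen_K\beta_j\subseteq M$.

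To conclude: from $\Delta_F\Oseen_{KF}\subseteq M\subseteq\Oseen_{KF}$ we get $[\Oseen_{KF}:M]\mid[\Oseen_{KF}:\Delta_F\Oseen_{KF}]=|\Delta_F|^{mn}$, so every prime dividing $[\Oseen_{KF}:M]$ divides $\Delta_F$; running the same argument with the roles of $K$ and $F$ exchanged shows that every such prime also divides $\Delta_K$. Since $(\Delta_K,\Delta_F)=1$, no prime divides $[\Oseen_{KF}:M]$, i.e.\ $\Oseen_{KF}=M=\Oseen_K\cdot\Oseen_F$. The only point that needs a little care is the identity $\mathrm{Tr}_{KF/K}\!\mid_F=\mathrm{Tr}_{F/\IQ}$, which I would deduce exactly as above from linear disjointness ($KF\cong K\otimes_\IQ F$ and trace commuting with base change); the verification that $M$ is the $\IZ$-span of the $\alpha_i\beta_j$ and the Cramer estimate are both routine, so I do not expect a genuine obstacle — this is essentially the classical argument underlying \cite[Proposition III.2.13]{FroTay}.
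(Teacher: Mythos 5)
Your proof is correct; the paper itself gives no argument here, citing \cite[Proposition III.2.13]{FroTay} instead, and your trace/Cramer argument (showing $\Delta_F\Oseen_{KF}\subseteq\Oseen_K\cdot\Oseen_F$, symmetrising, and using $(\Delta_K,\Delta_F)=1$) is exactly the classical proof underlying that reference. No gaps: the identification of $\Oseen_K\cdot\Oseen_F$ with the $\IZ$-span of $\alpha_i\beta_j$, the identity $\mathrm{Tr}_{KF/K}|_F=\mathrm{Tr}_{F/\IQ}$ via $KF\cong K\otimes_\IQ F$, and the index divisibility are all sound.
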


We will apply these general criteria to rings of the form $\IZ[p_1^{1/d_1},p_2^{1/d_2},p_3^{1/d_3},\ldots]$,
where $d_i$ and $p_i$ are suitably chosen primes. To this end we first need to compute the modulus of the discriminant
$\Delta_{\mathbb{Z}[p_i^{1/d_i}]}$.

\begin{lemma}\label{lem:disc}
Let $n\in \IN$, suppose $x^d-n\in \IZ[x]$ is irreducible,
 and let $\theta\in \Qbar$ be one of its roots.
We have $\left|\Delta_{\mathbb{Z}[\theta]}\right|=d^{d}n^{d-1}$. 
\end{lemma}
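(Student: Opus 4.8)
The plan is to compute the discriminant of the power basis $1,\theta,\dots,\theta^{d-1}$ of $\IZ[\theta]$ directly, using the formula $\Delta_{\IZ[\theta]}=(-1)^{d(d-1)/2}\mathrm{N}_{\IQ(\theta)/\IQ}(f'(\theta))$ for $f(x)=x^d-n=M_{\theta,\IQ}(x)$. First I would note $f'(\theta)=d\theta^{d-1}$. Then, taking norms and using multiplicativity, $\mathrm{N}(f'(\theta))=\mathrm{N}(d)\cdot\mathrm{N}(\theta)^{d-1}=d^{d}\cdot\mathrm{N}(\theta)^{d-1}$, where I use that $[\IQ(\theta):\IQ]=d$ so the norm of the rational integer $d$ is $d^d$. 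Since $f$ is the minimal polynomial, the constant term gives $\mathrm{N}(\theta)=(-1)^d\cdot(-n)=(-1)^{d+1}n$, so $\mathrm{N}(\theta)^{d-1}$ has absolute value $n^{d-1}$. Putting the pieces together, $\lvert\Delta_{\IZ[\theta]}\rvert=\lvert\mathrm{N}(f'(\theta))\rvert=d^{d}n^{d-1}$, as claimed.

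Alternatively — and this is really the same computation packaged differently — one can invoke the classical formula for the discriminant of a trinomial, or the resultant identity $\Delta_{\IZ[\theta]}=(-1)^{d(d-1)/2}\mathrm{Res}(f,f')$, and evaluate $\mathrm{Res}(x^d-n,\,dx^{d-1})$ by the product formula over the roots of $dx^{d-1}$ (which are just $0$ with multiplicity $d-1$, times the leading coefficient), giving the same answer. I would present whichever is cleanest; the norm-of-the-derivative route is the most self-contained and needs no further references.

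I do not expect a genuine obstacle here: the only points requiring a word of care are (i) that the hypothesis ``$x^d-n$ irreducible'' guarantees $f$ is the minimal polynomial of $\theta$, so that the norm computations above are legitimate and $[\IQ(\theta):\IQ]=d$; and (ii) keeping track of signs, which is harmless since only the absolute value is asserted. So the proof will be short: state the norm-of-derivative formula, substitute $f'(\theta)=d\theta^{d-1}$, compute $\mathrm{N}(d)=d^d$ and $\lvert\mathrm{N}(\theta)\rvert=n$, and conclude $\lvert\Delta_{\IZ[\theta]}\rvert=d^d n^{d-1}$.
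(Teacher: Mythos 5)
Your argument is correct, and it reaches the same conclusion by a genuinely different (though equivalent) route. The paper computes $\Delta_{\IZ[\theta]}=\prod_{i\neq j}(\theta_i-\theta_j)$ directly from the roots $\theta_i=\zeta_d^i\theta$, factoring out $\theta^{d(d-1)}$ and then evaluating the cyclotomic product $\prod_{i\neq j}(1-\zeta_d^{i-j})=\bigl(\prod_{i<d}(1-\zeta_d^i)\bigr)^d=d^d$ via the identity $\prod_{i<d}(x-\zeta_d^i)=\tfrac{x^d-1}{x-1}$. You instead invoke the norm-of-derivative formula $\Delta_{\IZ[\theta]}=(-1)^{d(d-1)/2}N_{\IQ(\theta)/\IQ}(f'(\theta))$, which packages that Vandermonde-type product into a single resultant, and then compute $N(d\theta^{d-1})=d^d\cdot N(\theta)^{d-1}$ with $|N(\theta)|=n$ read off from the constant term of $f$. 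Both are standard; yours is a bit slicker because it sidesteps the explicit cyclotomic computation, at the mild cost of needing to cite the resultant/norm identity rather than proceeding from first principles as the paper does. All the individual steps you list (multiplicativity of the norm, $N(d)=d^d$ since $d\in\IQ$ and $[\IQ(\theta):\IQ]=d$, $N(\theta)=(-1)^{d+1}n$) are correct, and the irreducibility hypothesis is used exactly where you say it is, to ensure $f$ is the minimal polynomial so that degree and norm computations make sense. No gaps.
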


\begin{proof}
Note that the minimal polynomial $M_{\theta,\IQ}$ of $\theta$ 
is given by $\prod_{i\leq d}\left(x-\zeta_{d}^{i}\theta\right)$.
Thus 
\begin{alignat}1\label{eq:lem:disc}
\left|\Delta_{\mathbb{Z}[\theta]}\right|=
\left|\prod_{\substack{1 \leq i,j \leq d \\ i \neq j}} 
(\zeta_{d}^{j}\theta-\zeta_{d}^{i}\theta)\right|
=\vert \theta^{d(d-1)}\vert \left|\prod_{\substack{1 \leq i,j \leq d \\ i \neq j}} 
(1-\zeta_{d}^{i-j})\right|.
\end{alignat}
Using the basic identity 
\[
\prod_{i<d}\left(x-\zeta_{d}^{i}\right)=\frac{x^{d}-1}{x-1}=\sum_{k=0}^{d-1}x^{k},
\]
and that for any fixed $i\leq d$ the relation 
$\{ \zeta_{d}^{i-j};\,j\leq d,\,j\neq i\} =\{ \zeta_{d}^{j};\,j\leq d-1\} $
holds, we see that 
\[
\prod_{\substack{1 \leq i,j \leq d \\ i \neq j}}
(1-\zeta_{d}^{i-j})=
\left(\prod_{i<d}\left(1-\zeta_{d}^{i}\right)\right)^{d}=d^{d}.
\]
The latter in conjunction with (\ref{eq:lem:disc}) shows that $\left|\Delta_{\mathbb{Z}[\theta]}\right|=n^{d-1}d^{d}$.
\end{proof}

Next, we apply Dedekind's criterion in our setting. It shows that if $p$ and $d$ are distinct primes and the Fermat quotient $\frac{p^{d-1}-1}{d}$ is not divisible by $d$ then  the field 
$\IQ(p^{1/d})$ is monogenic.
\begin{lemma}\label{lem:monogenic}
Let $p$ and $d$ be odd primes. Put $\theta=p^{1/d}$ and $K=\mathbb{Q}(\theta)$.
If $d^{2}\nmid p^{d}-p$ then $\Oseen_{K}=\mathbb{Z}[\theta]$. 
\end{lemma}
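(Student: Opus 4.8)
The plan is to apply Dedekind's criterion (Lemma \ref{lem: dedekind's criterion}) to $\theta=p^{1/d}$ with the two rational primes that can possibly divide the index $[\Oseen_K:\IZ[\theta]]$. By Lemma \ref{lem:disc} we have $|\Delta_{\IZ[\theta]}|=d^{d}p^{d-1}$, and since $[\Oseen_K:\IZ[\theta]]^2$ divides $\Delta_{\IZ[\theta]}/\Delta_K$, the only primes $q$ that can divide the index are $q=d$ and $q=p$. So it suffices to show that neither $d$ nor $p$ divides $[\Oseen_K:\IZ[\theta]]$, and for each of these two primes we verify condition (2) of Dedekind's criterion.

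First I would handle the prime $q=p$. Modulo $p$ we have $M_{\theta,\IQ}(x)=x^d-p\equiv x^d\pmod p$, so the irreducible factorisation of $\overline{M_{\theta,\IQ}}$ over $\IF_p[x]$ is $x^d$, i.e. a single factor $\varphi_1=x$ with exponent $e_1=d>1$. Thus condition (2) requires $\varphi_1=x\nmid \overline g$ in $\IF_p[x]$, where $g\in\IZ[x]$ satisfies $M_{\theta,\IQ}=x^d+pg$, i.e. $pg=-p$, so $g=-1$ and $\overline g=-1\in\IF_p[x]$. Clearly $x\nmid -1$, so condition (2) holds and $p\nmid[\Oseen_K:\IZ[\theta]]$; note this step uses no hypothesis on $p$ and $d$ beyond irreducibility of $x^d-p$.

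Next I would handle the prime $q=d$, which is where the hypothesis $d^2\nmid p^d-p$ enters. By Fermat's little theorem $p^d\equiv p\pmod d$, so $x^d-p\equiv x^d-p^d\equiv (x-p)^d\pmod d$ (using that raising to the $d$-th power is the Frobenius on $\IF_d$, since $d$ is prime). Hence the factorisation over $\IF_d[x]$ is $\overline{M_{\theta,\IQ}}=(x-p)^d$, a single factor $\varphi_1=x-p$ with exponent $e_1=d>1$. Taking $\mu_1=x-p\in\IZ[x]$ (so $\overline\mu_1=\varphi_1$), we have $M_{\theta,\IQ}=\mu_1^{d}+dg$, i.e. $dg=(x^d-p)-(x-p)^d$. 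Condition (2) of Dedekind's criterion then demands $\varphi_1=x-p\nmid\overline g$ in $\IF_d[x]$, equivalently $g(p)\not\equiv 0\pmod d$. From $dg(x)=(x^d-p)-(x-p)^d$ we get $dg(p)=(p^d-p)-0=p^d-p$, so $g(p)=(p^d-p)/d$, and $x-p\nmid\overline g$ is exactly the statement $d\nmid (p^d-p)/d$, i.e. $d^2\nmid p^d-p$, which is precisely the hypothesis. Therefore $d\nmid[\Oseen_K:\IZ[\theta]]$ as well, and combining the two cases gives $[\Oseen_K:\IZ[\theta]]=1$, i.e. $\Oseen_K=\IZ[\theta]$.

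The only mild subtlety — and the step I would be most careful about — is the congruence $x^d-p\equiv(x-p)^d\pmod d$: one must justify it via the Frobenius/freshman's-dream identity $(x-p)^d\equiv x^d-p^d\pmod d$ in $\IF_d[x]$ together with $p^d\equiv p\pmod d$, rather than by any naive binomial manipulation, and one should record that the resulting polynomial $g=\bigl((x^d-p)-(x-p)^d\bigr)/d$ genuinely has integer coefficients (which it does, since all the dropped binomial coefficients $\binom{d}{k}$ for $0<k<d$ are divisible by $d$, and the constant terms match modulo $d^2$ only up to the term we are tracking — but integrality modulo $d$ of the coefficients is all that is needed for $g\in\IZ[x]$). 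Everything else is a direct bookkeeping application of Lemma \ref{lem: dedekind's criterion} and Lemma \ref{lem:disc}. Incidentally one may note $d^2\mid p^d-p$ is equivalent to $d\mid \frac{p^{d-1}-1}{d}$, i.e. to $p$ being a Wieferich-type prime to base... — this reformulation matches the remark preceding the lemma and could be mentioned but is not needed for the proof.
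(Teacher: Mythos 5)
Your proof is correct and follows essentially the same route as the paper: restrict to $q\in\{p,d\}$ via Lemma \ref{lem:disc} and the index–discriminant relation, then apply Dedekind's criterion with the same choices of $\mu$ and $g$ in each case, reducing case $q=d$ to $\overline{g}(p)=(p^d-p)/d\not\equiv 0\pmod d$. The extra care you take in justifying $(x-p)^d\equiv x^d-p\pmod d$ via Frobenius/Fermat and in checking $g\in\IZ[x]$ is sound and just makes explicit what the paper leaves implicit.
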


\begin{proof}
It is well-known that $\Delta_{\mathbb{Z}[\theta]}=\Delta_{K}[\Oseen_{K}:\mathbb{Z}[\theta]]^2$.
Hence, in order to rule out that $[\Oseen_{K}:\mathbb{Z}[\theta]]>1$, it
suffices by Lemma \ref{lem:disc} to check that $p,d\nmid[\Oseen_{K}:\mathbb{Z}[\theta]]$ as $x^d-p$ is irreducible
in $\IZ[x]$ by Eisenstein's criterion. 
We use
Lemma \ref{lem: dedekind's criterion} and its notation, with $M_{\theta,\IQ}$
as above. Next we study the two critical cases $q=p$ and $q=d$ separately.\\

\noindent Case $q=p$: Then $\overline{M_{\theta,\IQ}}=x^{d}=\varphi^{d}$ with
$\mu=x$, and $g=-1$. So $\varphi\nmid\overline{g}$. \\

\noindent Case $q=d$: Then $\overline{M_{\theta,\IQ}}=\overline{x^{d}-p}=(x-p)^{d}=\varphi^{d}$
with $\mu=x-p$, and 
\[
g=\frac{x^{d}-p-(x-p)^{d}}{d}=\frac{p^{d}-p}{d}-\sum_{i=1}^{d-1}
\frac{\begin{pmatrix}d\\
i
\end{pmatrix}}{d}x^{d-i}(-p)^{i}.
\]
So $\varphi\mid\overline{g}$ if and only if $\overline{g}(p)=0$.
Now $\overline{g}(p)=\frac{p^{d}-p}{d}-0=\frac{p^{d}-p}{d}\neq0$. 
\end{proof}

Next we want to show that for given $\t>1$ and each prime $d_i$ sufficiently large, there exists a prime $p_i$ such that the sequence $p_i^{1/d_i}$ converges
to $\t$ in a prescribed way (i.e. from above or from below). But we also want the fields $\IQ(p_i^{1/d_i})$ to be monogenic, and so we need that 
the Fermat quotients $\frac{p_i^{d_i-1}-1}{d_i}$ are not divisible by $d_i$. 
We use the observation $\frac{(d-1)^{d-1}-1}{d} \equiv  1 \pmod {d}$, for any odd prime $d$,
which follows from a straightforward computation 
(see the proof of the next lemma).
Thus, to achieve the required monogenicity it suffices to have
\begin{alignat}1\label{cond:Eisenstein}
p_i\equiv d_i-1 \pmod {d_i^2}. 
\end{alignat}

\begin{lemma}\label{lem:Eisenstein}
Let $d$ be an odd prime. We have
$\frac{(d-1)^{d-1}-1}{d} \equiv  1 \pmod {d}$.
\end{lemma}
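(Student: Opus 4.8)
The plan is to prove $\frac{(d-1)^{d-1}-1}{d} \equiv 1 \pmod d$ by a direct binomial expansion of $(d-1)^{d-1}$ modulo $d^2$, since only the residue of the Fermat-type quotient modulo $d$ is needed. First I would write $(d-1)^{d-1} = \sum_{k=0}^{d-1} \binom{d-1}{k} d^k (-1)^{d-1-k}$ and note that since $d$ is an odd prime, $d-1$ is even, so $(-1)^{d-1} = 1$ and $(-1)^{d-1-k} = (-1)^{k}$. Reducing modulo $d^2$, all terms with $k \geq 2$ vanish, leaving
\[
(d-1)^{d-1} \equiv 1 - (d-1)d \pmod{d^2}.
\]
Here the $k=0$ term contributes $(-1)^{d-1} = 1$ and the $k=1$ term contributes $\binom{d-1}{1} d (-1)^{d-2} = -(d-1)d$.

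Next I would simplify $-(d-1)d = -d^2 + d \equiv d \pmod{d^2}$, so that $(d-1)^{d-1} \equiv 1 + d \pmod{d^2}$. Therefore $(d-1)^{d-1} - 1 \equiv d \pmod{d^2}$, which means $(d-1)^{d-1} - 1 = d + d^2 m$ for some integer $m$; note $(d-1)^{d-1}-1$ is indeed divisible by $d$ (consistent with Fermat's little theorem, as $d-1 \equiv -1 \pmod d$ gives $(d-1)^{d-1} \equiv (-1)^{d-1} = 1 \pmod d$). Dividing by $d$ gives $\frac{(d-1)^{d-1}-1}{d} = 1 + dm \equiv 1 \pmod d$, which is the claim.

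I do not expect any genuine obstacle here; the only point requiring a little care is the parity bookkeeping with the signs $(-1)^{d-1-k}$, which relies on $d$ being odd so that $d-1$ is even — this is exactly why the hypothesis "$d$ an odd prime" appears. One could alternatively phrase the whole computation $2$-adically or via the expansion of $(1-d)^{d-1}$, but the straightforward binomial argument modulo $d^2$ is the cleanest route and keeps everything elementary.
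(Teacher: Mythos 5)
Your proof is correct and takes essentially the same route as the paper: both expand $(d-1)^{d-1}$ by the binomial theorem in powers of $d$, isolate the $k=0$ and $k=1$ terms (the others being divisible by $d^2$), and use the parity of $d-1$ to simplify the signs; you package the computation as a congruence modulo $d^2$ while the paper manipulates the sum explicitly, but the underlying argument is the same.
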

\begin{proof}
First, note that 
$$
(d-1)^{d-1}= \sum_{0\leq j \leq d-1} \binom{d-1}{j} d^j (-1)^{d-1-j}
=(-1)^{d-1} 
+ (d-1)d (-1)^{d-2}
+d  \sum_{2\leq j \leq d-1} \binom{d-1}{j} d^{j-1} (-1)^{d-1-j}.
$$
Because $d$ is odd, the first two terms simplify to $ 1 - d(d-1) $. Hence,  
$$
\frac{(d-1)^{d-1}-1}{d}=
1 +d\bigg( -1+ \sum_{2\leq j \leq d-1} \binom{d-1}{j} d^{j-2} (-1)^{d-1-j}\bigg)
$$
which implies the claim.
\end{proof}

A straightforward application of the Siegel--Walfisz Theorem, 
see \cite[Cor. 5.29]{IW04}, guarantees the existence 
of the required primes $p_i$ in the right interval, and the prescribed residue class. 
\begin{lemma}\label{lem:SiegelWalfisz}
Fix $\t>1$. 
If $d$ is a sufficiently large prime  in terms of $\t$,  then there
exist a prime  $p$ in the interval $(\t^{d},2\t^{d})$ and one prime $p$ in the interval $(\t^{d}/2,\t^{d})$
such that in both cases $p\equiv d-1\pmod {d^2}$.
\end{lemma}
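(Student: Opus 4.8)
Looking at Lemma~\ref{lem:SiegelWalfisz}, I need to prove that for a fixed $\t > 1$ and $d$ a sufficiently large prime, there exist primes $p$ in each of the intervals $(\t^d/2, \t^d)$ and $(\t^d, 2\t^d)$ with $p \equiv d-1 \pmod{d^2}$.

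The natural tool is the Siegel–Walfisz theorem in the form of the prime number theorem for arithmetic progressions with a good error term. Let me sketch my approach.

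The plan is to apply the Siegel--Walfisz theorem directly, observing that the modulus $d^2$ is utterly negligible compared with the size $\t^d$ of the numbers whose primality we are testing. Concretely, I would invoke the version in \cite[Cor. 5.29]{IW04}: for every $A>0$ there is $c=c(A)>0$ such that, uniformly for $q\leq (\log x)^A$ and $\gcd(a,q)=1$,
$$\pi(x;q,a)=\frac{\mathrm{li}(x)}{\varphi(q)}+O_A\big(x\exp(-c\sqrt{\log x})\big).$$
I would take $q=d^2$ and $a=d-1$, noting that $\gcd(d-1,d^2)=1$, so the progression is admissible. For the modulus condition, with $x$ equal to $\t^{d}$ or $2\t^{d}$ we have $\log x=d\log\t+O(1)$, hence $q=d^2\leq(\log x)^{3}$ for all $d$ sufficiently large in terms of $\t$; so we may fix $A=3$.

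Next I would estimate the main term. Writing $\delta=\log\t>0$, the interval $(\t^d/2,\t^d)$ gives
$$\frac{\mathrm{li}(\t^d)-\mathrm{li}(\t^d/2)}{\varphi(d^2)}\gg\frac{\t^d/2}{d\log\t}\cdot\frac{1}{d^2}\gg_{\t}\frac{\t^{d}}{d^{3}},$$
using $\varphi(d^2)=d(d-1)<d^2$ and $\mathrm{li}(x)-\mathrm{li}(y)\asymp (x-y)/\log x$ on this range. The error term is $O\big(\t^{d}\exp(-c\sqrt{d\log\t})\big)$, and since $\exp(-c\sqrt{d\log\t})$ decays faster than any fixed negative power of $d$, for $d$ large in terms of $\t$ this error is smaller than half the main term. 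Therefore
$$\pi(\t^d;d^2,d-1)-\pi(\tfrac{\t^d}{2};d^2,d-1)>0,$$
which produces a prime $p\in(\t^d/2,\t^d)$ with $p\equiv d-1\pmod{d^2}$. Running the identical computation with $x=2\t^d$, $y=\t^d$ (where again $\mathrm{li}(2\t^d)-\mathrm{li}(\t^d)\asymp \t^d/(d\log\t)$) yields the second prime in $(\t^d,2\t^d)$.

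There is no serious obstacle here; the only points requiring a little care are (i) checking that the modulus $d^2$ does fit under the Siegel--Walfisz threshold $(\log x)^A$, which is immediate because $\log x$ is of exponential size in $d$ while $d^2$ is polynomial, and (ii) noting that the implied constant $c(A)$ from Siegel's theorem is ineffective in $\t$, which is harmless since we only claim existence for $d$ large \emph{in terms of} $\t$. One should also remark that the argument in fact shows each interval contains $\gg_{\t}\t^d/d^3$ such primes, far more than the single prime needed.
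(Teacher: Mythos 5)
Your proof is correct and follows essentially the same route as the paper: apply Siegel--Walfisz with modulus $q=d^2$ and residue $a=d-1$ (checking coprimality and that $d^2$ is well below the $(\log x)^A$ threshold, which the paper does implicitly with the choice $N=4$), estimate the main term as $\gg_\t \t^d/d^3$, and observe the error term is negligible. The only cosmetic difference is that you use the exponential-savings form of Siegel--Walfisz while the paper uses the polynomial-savings form $O_N(x(\log x)^{-N})$, but the comparison of main term to error term works identically either way.
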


\begin{proof}
Let $\pi(x;q,a)$ denote the number of primes  $p\leq x$
that solve the congruence $p\equiv a\pmod q$. Let $\varphi$ denote
Euler's totient function, and let 
\[
\mathrm{Li}(x)=\int_{2}^{x}\frac{\mathrm{d}\tau}{\log\tau}.
\]
The Siegel--Walfisz theorem states for any $N>1$ and $q\geq 1$
we have
\[
\pi(x;q,a)=\frac{\mathrm{Li}(x)}{\varphi(q)}+O_{N}(x(\log x)^{-N})
\]
uniformly for all $1 \leq a \leq q$  with $(a,q)=1$ for any $x\geq 2$. 
We will only check that 
$$\pi(2\t^{d};d^2,d-1)-\pi(\t^{d};d^2,d-1)\geq 2.$$
Here the $2$ ensures that we have an element in the open interval $(\t^d,2\t^d)$.
The case $\pi(\t^{d};d^2,d-1)-\pi(\t^{d}/2;d^2,d-1)\geq 2$ is shown similarly.
Thus, specialising $x=\t^{d}$ and $N=4$, we infer that 
\begin{align*}
\pi(2x;d^{2},d-1)-\pi(x;d^{2},d-1) & =\frac{\mathrm{Li}(2x)}{\varphi(d^{2})}-\frac{\mathrm{Li}(x)}{\varphi(d^{2})}+O\left(\frac{x}{(\log x)^{4}}\right)\\
 & =\frac{1}{\varphi(d^{2})}\int_{x}^{2x}\frac{\mathrm{d}\tau}{\log\tau}
+O\left(\frac{x}{(\log x)^{4}}\right).
\end{align*}
Because $d=(\log x)/\log \t$, we have 
\[
\frac{1}{\varphi(d^{2})}\int_{x}^{2x}\frac{\mathrm{d}\tau}{\log\tau}>\frac{1}{\left(\frac{\log x}{\log \t}\right)^{2}}\frac{x}{\log(2x)}>(\log \t)^{2}\frac{x}{(1+\log x)^{3}}.
\]
Since $d$ is large (and hence $x$ as well), this implies $\pi(2x;d^{2},d-1)-\pi(x;d^{2},d-1)\geq 2$.
\end{proof}

We can now deduce:
\begin{lemma}\label{lem:compositeringofintegersapplied}
Let $(p_i,d_i)_i$ be a sequence where both components are prime, $\min\{p_{i+1}, d_{i+1}\}>\max\{p_{i}, d_{i}\}$ for all $i$, and such that  $\Oseen_{\IQ(\xi_i)}=\IZ[\xi_i]$ with $\xi_i=p_i^{1/d_i}$.
Then with $\displaystyle{K=\cup_{i\geq1} \IQ(\xi_1,\ldots,\xi_i)}$ and $\Oseen_i=\IZ[\xi_1,\ldots,\xi_i]$  we have
$\displaystyle{\Oseen_K=\bigcup_{i\geq1} \Oseen_i}$.
\end{lemma}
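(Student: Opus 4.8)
The plan is to show $\Oseen_K=\bigcup_{i\geq1}\Oseen_i$ by an induction on $i$, at each stage upgrading the known equality $\Oseen_{K_{i-1}}=\Oseen_{i-1}$ (where $K_{i-1}=\IQ(\xi_1,\ldots,\xi_{i-1})$) to $\Oseen_{K_i}=\Oseen_i$, and then passing to the union. The engine for the inductive step is Lemma \ref{lem:compositeringofintegers}: I want to write $K_i=K_{i-1}\cdot\IQ(\xi_i)$ and check the two hypotheses, namely that $K_{i-1}$ and $\IQ(\xi_i)$ are linearly disjoint over $\IQ$, and that their discriminants are coprime. Granting these, Lemma \ref{lem:compositeringofintegers} gives $\Oseen_{K_i}=\Oseen_{K_{i-1}}\cdot\Oseen_{\IQ(\xi_i)}=\Oseen_{i-1}\cdot\IZ[\xi_i]=\IZ[\xi_1,\ldots,\xi_i]=\Oseen_i$, using the hypothesis $\Oseen_{\IQ(\xi_i)}=\IZ[\xi_i]$ and the induction hypothesis. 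The base case $i=1$ is exactly the hypothesis $\Oseen_{\IQ(\xi_1)}=\IZ[\xi_1]$.

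For the coprimality of discriminants: by Lemma \ref{lem:disc} we have $\lvert\Delta_{\IZ[\xi_i]}\rvert=d_i^{d_i}p_i^{d_i-1}$, and since $\Oseen_{\IQ(\xi_i)}=\IZ[\xi_i]$ this is (up to sign) $\Delta_{\IQ(\xi_i)}$; thus the only primes dividing $\Delta_{\IQ(\xi_i)}$ are $p_i$ and $d_i$. The discriminant of the compositum $K_{i-1}$ is divisible only by primes that ramify in $K_{i-1}$, and each such prime ramifies in some $\IQ(\xi_j)$ with $j\leq i-1$, hence divides $\Delta_{\IQ(\xi_j)}$, hence lies in $\{p_j,d_j : j\leq i-1\}$. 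The growth condition $\min\{p_{i+1},d_{i+1}\}>\max\{p_i,d_i\}$ forces all the primes $p_1,d_1,\ldots,p_i,d_i$ to be pairwise distinct, so in particular $p_i,d_i$ divide neither $\Delta_{K_{i-1}}$; this gives $(\Delta_{K_{i-1}},\Delta_{\IQ(\xi_i)})=1$. (Alternatively, and more cleanly, one can build this into the induction: assume as part of the inductive hypothesis that $\Delta_{K_{i-1}}$ is divisible only by primes among $\{p_j,d_j: j\leq i-1\}$, which is immediate from $\Oseen_{K_{i-1}}=\Oseen_{i-1}$ being generated by the $\xi_j$ together with Lemma \ref{lem:compositeringofintegers}'s conclusion $\Delta_{K_{i-1}}\mid \Delta_{\IQ(\xi_1)}^{\cdots}\cdots$, or simply $\Delta_{K_{i-1}}$ divides a power of $\prod_{j<i}\Delta_{\IQ(\xi_j)}$.) For linear disjointness, $[\IQ(\xi_i):\IQ]=d_i$ is prime, so $K_{i-1}\cap\IQ(\xi_i)$ is either $\IQ$ or all of $\IQ(\xi_i)$; the latter is impossible because it would force $d_i\mid[K_{i-1}:\IQ]=\prod_{j<i}d_j$, contradicting the distinctness of the primes $d_1,\ldots,d_i$.

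Finally, to pass from $\Oseen_{K_i}=\Oseen_i$ for all $i$ to $\Oseen_K=\bigcup_i\Oseen_i$: an element $\alpha\in\Oseen_K$ lies in $K=\bigcup_i K_i$, hence in some $K_i$, and being an algebraic integer it lies in $\Oseen_{K_i}=\Oseen_i\subseteq\bigcup_j\Oseen_j$; conversely each $\Oseen_i=\Oseen_{K_i}\subseteq\Oseen_K$, so $\bigcup_i\Oseen_i\subseteq\Oseen_K$.

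\textbf{Expected main obstacle.} The routine machinery (induction, union) is harmless; the one point that needs genuine care is verifying the two hypotheses of Lemma \ref{lem:compositeringofintegers} at each step, and in particular keeping track of which primes can divide $\Delta_{K_{i-1}}$. The cleanest route is to strengthen the induction hypothesis to record that $\Delta_{K_{i-1}}$ is supported on $\{p_j,d_j : j\leq i-1\}$, so that the strictly increasing nature of the sequence (which separates $\{p_i,d_i\}$ from all earlier primes) immediately yields the needed coprimality; linear disjointness then follows for free from $d_i$ being a prime not among $d_1,\ldots,d_{i-1}$.
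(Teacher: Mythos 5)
Your proposal is correct and follows essentially the same strategy as the paper: induct on $i$, apply Lemma \ref{lem:compositeringofintegers} to the compositum $K_{i-1}\cdot\IQ(\xi_i)$ at each step, verify the discriminant-coprimality hypothesis by tracking the prime support of $\Delta_{K_{i-1}}$ via Lemma \ref{lem:disc} and the growth condition on $(p_i,d_i)$, verify linear disjointness from the degree data, and then pass to the union.

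One inaccuracy is worth flagging. In the body of the argument you justify linear disjointness by showing $K_{i-1}\cap\IQ(\xi_i)=\IQ$. Trivial intersection is necessary but not sufficient for linear disjointness: for example $\IQ(2^{1/3})$ and $\IQ(\zeta_3\cdot 2^{1/3})$ meet only in $\IQ$, yet their compositum has degree $6$, not $9$, so they are not linearly disjoint. What does work --- and what you correctly state in your closing paragraph --- is that $[K_{i-1}:\IQ]=d_1\cdots d_{i-1}$ is coprime to $[\IQ(\xi_i):\IQ]=d_i$, and two number fields whose degrees over $\IQ$ are coprime are automatically linearly disjoint (both degrees divide the degree of the compositum, which is bounded above by their product). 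This is also exactly the reasoning the paper gives, so you should drop the intersection detour in favour of the coprime-degree argument; carrying the formula $[K_{i-1}:\IQ]=d_1\cdots d_{i-1}$ as part of the inductive hypothesis, as you suggest, makes this clean.
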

\begin{proof}
Set $K_i=\IQ(\xi_1,\ldots,\xi_i)$. The only primes that ramify in $K_{i-1}$ are $p_1,d_1,\ldots, p_{i-1},d_{i-1}$. Hence we have $(\Delta_{K_{i-1}}, \Delta_{\IQ(\xi_i)})=1$.
Moreover, $K_{i-1}$ and $\IQ(\xi_i)$ are linearly disjoint over $\IQ$ since their degrees $d_1\cdots d_{i-1}$ and $d_i$ are coprime.
We conclude from Lemma \ref{lem:compositeringofintegers} that $\Oseen_{K_i}=\Oseen_{K_{i-1}}\cdot \IZ[\xi_i]$, and hence by induction $\Oseen_{K_i}= \IZ[\xi_1,\ldots,\xi_i]=\Oseen_i$.
This proves that
$\Oseen_K=\cup_{i\geq1} \Oseen_i$.
\end{proof}

\section{Proof of Theorem \ref{thm: spectrum full for house function}}
We use the specific construction given in Corollary \ref{example:standardfields} with additional constraints on the primes $p_i$ and $d_i$
as required to ensure the specific properties.\\

(a): Let $(d_i)_i$ be a sequence of strictly increasing primes. Combining Lemma \ref{lem:monogenic}, the congruence condition (\ref{cond:Eisenstein}),
and Lemma \ref{lem:SiegelWalfisz}, we see that for each $i$ large enough there exists a prime $p_i\in (t^{d_i}/2,t^{d_i})$ such that $\Oseen_{\IQ(p_i^{1/d_i})}=\IZ[p_i^{1/d_i}]$.
We select a subsequence of $(p_i,d_i)_i$ to ensure $\min\{p_{i+1}, d_{i+1}\}>\max\{p_{i}, d_{i}\}$ for all $i$. 
Set $\xi_i=p_i^{1/d_i}$, $\Oseen_i=\IZ[\xi_1,\ldots,\xi_i]$, and  $\Oseen=\cup_i \Oseen_i$. 
The $\xi_i$ are pairwise distinct, and $2^{-1/d_i}t<\house{\xi_i}<t$. Thus $\#\{\alpha\in \Oseen; \housealp< t\}=\infty$, and it 
follows from  Corollary  \ref{example:standardfields} that $\N_{\house{\cdot}}(\Oseen)=t$.
Finally, by Lemma \ref{lem:compositeringofintegersapplied} we see that
$\Oseen_K=\Oseen$. This proves the existence of a field $K$ with $\N_{\house{\cdot}}(\Oseen_K)=t$ and satisfying (a).

(b): To construct a field $K$ with $\N_{\house{\cdot}}(\Oseen_K)=t$ and (b) we proceed in the same
manner but choose the primes $p_i\in (t^{d_i},2t^{d_i})$, so that  by Corollary  \ref{example:standardfields} 
$\eta(K_{i-1},\xi_i)=\house{\xi_i}\in (t,2^{1/d_i}t)$.
It follows from  Proposition \ref{prop:housebound} that
$\#\{\alpha\in \Oseen_K; \housealp\leq t\}<\infty$.

Now suppose $\epsilon>0$ and $M-t<(t-1)/2$, and suppose $\alpha \in \Oseen$ with $t+\epsilon<\housealp<M$.
Clearly, there are only finitely many such $\alpha$ in $\Oseen_0$, so we can assume $\alpha \notin \Oseen_0$.
Thus $\alpha \in \Oseen_i\backslash\Oseen_{i-1}$ for some $i\in \IN $. Note that $d_i=[\IQ(\xi_i):\IQ]$.
Hence, there exists  $P=\sum_k a_kx^k\in \Oseen_{i-1}[x]$ of degree $1\leq n< d_i$ such that $\alpha=P(\xi_i)$.
Since $M_{\xi_i,K_{i-1}}(x)=M_{\xi_i,\IQ}(x)=x^{d_i}-p_i\in \IQ[x]$ it follows that the $d_i$ distinct roots of  $\sigma(M_{\xi_i,K_{i-1}})(x)$
are perfectly equidistributed on the circle $|z|=\house{\xi_i}=p_i^{1/d_i}$, and therefore  $D\left(\frac{\vtau_\sigma(\xi_i)}{\|\vtau_\sigma(\xi_i)\|}\right)=0$ for all $\sigma\in \Hom(K_{i-1})$.
By Lemma \ref{lem:housebound} we conclude
that $\housealp\geq \house{\xi_i}^n>t^n$, and thus $t^n<t+(t-1)/2$. This forces $n<2$ and thus $n=1$. So $\alpha=a_1\xi_i+a_0$. 
Again, by Lemma \ref{lem:housebound} we get
that $\housealp\geq |\sigma(a_1)|\house{\xi_i}$ for all $\sigma\in \Hom(K_{i-1})$, and thus $\housealp\geq \house{a_1}\cdot \house{\xi_i}$.
It follows that $M>\housealp>\house{a_1}\cdot \house{\xi_i}>\house{a_1}\cdot t$, and thus $\house{a_1}<M/t<1+(t-1)/(2t)<1+(t-1)/2<t$.
Hence there are only finitely many possibilities for $a_1\in \Oseen$, in particular, there is an element with smallest house $>1$.
We can assume that $M/t$ is below the smallest house value $>1$ for such elements $a_1$. 
This forces $\house{a_1}=1$, which in turn
implies that all archimedian absolute values are equal to $1$ since $a_1$ is integral.

Next note that the sectors bounded by the $d_i$ rays starting at $0$ and joining the conjugates $\house{\xi_i}\zeta_{d_i}^j$ partition the complex plane.
Consider the sector that contains $\sigma(a_0)$ where $\sigma\in \Hom(K_{i-1})$ is such that $\house{a_0}=|\sigma(a_0)|$. 
Due to the coprimality of the degrees $d_i$ there exists an extension $\tau\in \Hom(K_{i})$ of $\sigma$ that sends $a_1\xi_i$ to a
conjugate that lies in the same sector as  $\sigma(a_0)$. It follows that 
$\housealp\geq|\tau(a_1\xi_i+a_0)|\geq |\tau(a_1\xi_i)|+|\cos(2\pi/d_i)\sigma(a_0)|$. 
We can assume that $d_i>6$ and thus $\cos(2\pi/d_i)\geq 1/2$. Hence we conclude 
$M>\housealp\geq \house{\xi_i}+\house{a_0}/2$.
Moreover, we can assume that $M<t+1/2$. As $\house{\xi_i}>t$ and $a_0$ is integral  we conclude that $a_0=0$. Thus 
$t+\epsilon<\housealp=\house{\xi_i}\leq 2^{1/d_i}t$, and thus $i$ is bounded in terms of $\epsilon$. Hence, there are only finitely many choices of $\alpha$, and 
this proves part (b).

(c): And finally, to construct a field $K$ with the third property we take a sequence $(T_i)_i$ of real numbers that converges to $t$ from above.
For each $T_i$ we construct a sequence  $(p_{ij}, d_{ij})_j$ with $p_{ij}\in (T_i^{d_{ij}},2T_i^{d_{ij}})$ and $\Oseen_{\IQ(p_{ij}^{1/d_{ij}})}=\IZ[p_{ij}^{1/d_{ij}}]$.
A slightly modified Cantor diagonalisation argument allows us to construct a sequence $(p_m,d_m)_m$ that contains infinitely many elements from each sequence
$(p_{ij}, d_{ij})_j$, and satisfies $\min\{p_{m+1}, d_{m+1}\}>\max\{p_{m}, d_{m}\}$ for all $m$. With $\xi_m=p_{m}^{1/{d_{m}}}$  we conclude from Lemma \ref{lem:compositeringofintegersapplied}  that $\Oseen_K=\Oseen=\IZ[\xi_m; m\in \IN]$. Now by Corollary  \ref{example:standardfields} 
we get $\N_{\house{\cdot}}(\Oseen_K)=t$, and by
Proposition \ref{prop:housebound} we see that 
$\housealp>t$ for all $\alpha\in \Oseen_K\backslash \IZ$. And, finally, since each $T_i$ is a limit point of $(\house{\xi_m})_m$ we see that 
$\#\{\alpha\in \Oseen_K; t+\epsilon\leq \housealp< M\}=\infty$ for all $M>t$ and all $\epsilon>0$ sufficiently small.
This finishes the proof of Theorem \ref{thm: spectrum full for house function}.

\section{Proofs of Theorem \ref{Prop2} and Theorem \ref{PropNBGexplicit}}\label{framework}

Theorem \ref{Prop2} and Theorem \ref{PropNBGexplicit} both use Lemma \ref{LemmaN}  applied to a common setting,
with the sets $A_{i}$ defined as follows. Let $(p_i)_i$ and $(d_i)_i$ be sequences of prime numbers.
Let $p_i^{1/d_i}$ be any choice of a $d_i$-th root of $p_i$.
We set $A_{i}=K_i$ where $K_0=\IQ$ and $K_{i+1}=K_i(p_i^{1/d_i})$, and we write $K=\cup_i K_i$.

\begin{lemma}\label{Lemdelta}
Let $\gamma\geq 0$, and recall that $h_{\gamma}(\alpha)=(\deg \alpha)^\gamma h(\alpha)$. Then $\N_{h_{\gamma}}(K_i)=\infty$ for all $i\in \IN_0$. Moreover, if $p_{i}\notin \{d_1,p_1,\ldots,d_{i-1},p_{i-1}\}$ then
$$\delta_{h_{\gamma}}(K_{i}\backslash K_{i-1})\geq d_i^{\gamma}\left(\frac{\log p_i}{2d_i}-\frac{\log d_i}{2(d_i-1)}\right).$$
\end{lemma}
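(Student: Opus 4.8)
The plan is to prove the two assertions in turn, following the method of \cite{WidmerPropN}. For the first, each $K_i$ is a number field, so by Northcott's theorem (the case $K=U=\IQ$ of Lemma~\ref{lem:generalNorthcott}) $\N_h(K_i)=\infty$; and since $\gamma\geq0$ and $\deg\alpha\geq1$ give $(\deg\alpha)^\gamma\geq1$, one has $h(\alpha)\leq h_\gamma(\alpha)$ for all $\alpha\in K_i$, so $\{\alpha\in K_i:h_\gamma(\alpha)<X\}\subseteq\{\alpha\in K_i:h(\alpha)<X\}$ is finite for every $X$, i.e.\ $\N_{h_\gamma}(K_i)=\infty$. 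For the second assertion I would first determine $[K_i:K_{i-1}]$. By Lemma~\ref{lem:disc} the primes ramifying in $K_{i-1}=\IQ(p_1^{1/d_1},\dots,p_{i-1}^{1/d_{i-1}})$ all lie in $\{d_1,p_1,\dots,d_{i-1},p_{i-1}\}$, so the hypothesis $p_i\notin\{d_1,p_1,\dots,d_{i-1},p_{i-1}\}$ forces $p_i$ unramified in $K_{i-1}$; hence $v_{\mathfrak p}(p_i)=1$ for every prime $\mathfrak p\mid p_i$ of $K_{i-1}$, so $p_i$ is not a $d_i$-th power in $K_{i-1}$, and since $d_i$ is prime $x^{d_i}-p_i$ is irreducible over $K_{i-1}$, $[K_i:K_{i-1}]=d_i$, and each $\mathfrak p\mid p_i$ is totally and (as $p_i\neq d_i$) tamely ramified in $K_i$. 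As $d_i$ is prime there is no intermediate field, so every $\alpha\in K_i\setminus K_{i-1}$ satisfies $K_{i-1}(\alpha)=K_i$, whence $\deg_{K_{i-1}}\alpha=d_i$ and $(\deg\alpha)^\gamma\geq d_i^\gamma$. It therefore suffices to show
\[
h(\alpha)\ \geq\ \frac{\log p_i}{2d_i}-\frac{\log d_i}{2(d_i-1)}\qquad(\alpha\in K_i\setminus K_{i-1}),
\]
since then $h_\gamma(\alpha)=(\deg\alpha)^\gamma h(\alpha)\geq d_i^\gamma\bigl(\tfrac{\log p_i}{2d_i}-\tfrac{\log d_i}{2(d_i-1)}\bigr)$ whether the bracket is nonnegative (use $h(\alpha)\geq0$ and $\deg\alpha\geq d_i$) or negative (the left side is $\geq0$).

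To prove the displayed inequality I would use the discriminant $D:=\mathrm{disc}(M_{\alpha,K_{i-1}})=\prod_{1\leq j<k\leq d_i}(\alpha^{(j)}-\alpha^{(k)})^2\in K_{i-1}$, where $\alpha^{(1)},\dots,\alpha^{(d_i)}$ are the conjugates of $\alpha$ over $K_{i-1}$. \emph{Upper bound.} By Mahler's discriminant inequality $|\mathrm{disc}(f)|\leq n^{n}M(f)^{2n-2}$ for a monic $f\in\IC[x]$ of degree $n$, applied to $\sigma(M_{\alpha,K_{i-1}})$ for each embedding $\sigma\colon K_{i-1}\hookrightarrow\IC$ (whose roots are the $\tau(\alpha)$ with $\tau\mid_{K_{i-1}}=\sigma$), together with the ultrametric inequality $|D|_v\leq\prod_j\max\{1,|\alpha^{(j)}|_v\}^{2(d_i-1)}$ at the finite places, one obtains after summation (using $\sum_\sigma\sum_{\tau\mid\sigma}=\sum_{\tau\colon K_i\hookrightarrow\IC}$ and $h(\alpha^{(j)})=h(\alpha)$) the bound $h(D)\leq d_i\log d_i+2d_i(d_i-1)h(\alpha)$. \emph{Lower bound.} Assume first that $\alpha$ is a $p_i$-integer. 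Since $\alpha$ generates $K_i$ over $K_{i-1}$, at each $\mathfrak p\mid p_i$ we get $v_{\mathfrak p}(D)\geq v_{\mathfrak p}(\mathrm{disc}_{K_i/K_{i-1}})=d_i-1$ by tame total ramification; hence, using $\sum_{\mathfrak p\mid p_i}f(\mathfrak p/p_i)=[K_{i-1}:\IQ]$,
\[
h(D)=h(1/D)\ \geq\ \frac{1}{[K_{i-1}:\IQ]}\sum_{\mathfrak p\mid p_i}f(\mathfrak p/p_i)(d_i-1)\log p_i\ =\ (d_i-1)\log p_i.
\]
Combining the two bounds gives $h(\alpha)\geq\frac{\log p_i}{2d_i}-\frac{\log d_i}{2(d_i-1)}$.

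The main obstacle is the case $\alpha\in K_i\setminus K_{i-1}$ that is \emph{not} a $p_i$-integer. Writing $\alpha=\sum_{l=0}^{d_i-1}\mu_l\,p_i^{l/d_i}$ with $\mu_l\in K_{i-1}$, a local computation at $\mathfrak p\mid p_i$ (using that $p_i\neq d_i$, so the $d_i$-th roots of unity stay distinct modulo primes over $p_i$) gives $v_{\mathfrak p}(D)=(d_i-1)\cdot\min_{1\leq l\leq d_i-1,\ \mu_l\neq0}\bigl(d_i v_{\mathfrak p}(\mu_l)+l\bigr)$, which is $(d_i-1)$ times a nonzero integer; it is $\geq d_i-1$ exactly when all of $\mu_1,\dots,\mu_{d_i-1}$ are $\mathfrak p$-integral (which holds at every $\mathfrak p\mid p_i$ when $\alpha$ is a $p_i$-integer, recovering the previous case), and at the remaining primes $\alpha$ has a pole at the prime of $K_i$ above $\mathfrak p$, of order at least $|v_{\mathfrak p}(D)|/(d_i-1)$, which feeds into $h(\alpha)$ directly. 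Splitting the primes above $p_i$ accordingly and adding the discriminant contribution ($h(1/D)$ at the ``good'' primes) to the pole contribution ($h(\alpha)$ at the ``bad'' primes) one again assembles $(d_i-1)\log p_i$ on the arithmetic side; arranging the bookkeeping so that $h(\alpha)$ is not charged twice, and so that the constant $\tfrac{1}{2d_i}$ survives rather than something smaller, is the delicate point. I also expect the sharp constant $n^{n}$ in Mahler's inequality — not a cruder $2^{O(n^2)}$ — to be precisely what produces the $\tfrac{\log d_i}{2(d_i-1)}$ term. Everything else (the first assertion, the degree count, and the $p_i$-integral case) is routine.
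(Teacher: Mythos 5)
Your first assertion and the degree count are exactly what the paper does (Northcott for number fields, $(\deg\alpha)^\gamma\geq 1$, the ramification argument showing $x^{d_i}-p_i$ is Eisenstein over $\Oseen_{K_{i-1}}$, and $K_{i-1}(\alpha)=K_i$ since $d_i$ is prime). The difference is in how the key height inequality is obtained. The paper simply \emph{cites} Silverman's Theorem 2, which asserts directly that for any generator $\alpha$ of a relative extension $K/F$ of degree $n$,
\[
h(\alpha)\ \geq\ \frac{\log N_{F/\IQ}(D_{K/F})}{2[F:\IQ]\,n(n-1)}-\frac{\log n}{2(n-1)},
\]
and then observes that $p_i^{[K_{i-1}:\IQ](d_i-1)}$ divides $N_{K_{i-1}/\IQ}(D_{K_i/K_{i-1}})$ by tame total ramification. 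You, instead, set out to reprove Silverman's bound from scratch.

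That is where the genuine gap is. Your argument is fine (and correct) in the case where $\alpha$ is a $p_i$-integer: the upper bound $h(D)\leq d_i\log d_i+2d_i(d_i-1)h(\alpha)$ via Mahler's inequality and the ultrametric inequality checks out, and the lower bound $h(1/D)\geq (d_i-1)\log p_i$ via $v_{\mathfrak p}(D)\geq v_{\mathfrak p}(D_{K_i/K_{i-1}})=d_i-1$ at every $\mathfrak p\mid p_i$ is also correct. But when $\alpha$ is not a $p_i$-integer, the inequality $v_{\mathfrak p}(D)\geq d_i-1$ can fail (indeed $v_{\mathfrak p}(D)$ can be very negative), so $h(1/D)\geq(d_i-1)\log p_i$ is no longer available, and the two sides of your argument no longer combine. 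You acknowledge this and sketch a ``split the places into good and bad ones, use the pole of $\alpha$ at the bad ones'' strategy, but as you yourself note the bookkeeping that prevents $h(\alpha)$ from being double-charged and preserves the constant $\tfrac{1}{2d_i}$ is precisely the delicate content of Silverman's proof, and you do not carry it out. As written, the proposal therefore proves the displayed lower bound only on the subset of $p_i$-integral elements of $K_i\setminus K_{i-1}$, which is strictly weaker than the lemma (and not enough for the downstream applications, since $\delta_{h_\gamma}$ is an infimum over all of $K_i\setminus K_{i-1}$). The fix is either to complete the local analysis at the bad primes, or — as the paper does — to invoke Silverman's Theorem 2, which is stated precisely to handle all generators uniformly.
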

\begin{proof}
First note that $\N_{h_{\gamma}}(K_i)=\infty$ by Northcott's Theorem. 
Next let us prove the inequality. Since only primes in $\{d_1,p_1,\ldots,d_{i-1},p_{i-1}\}$ can ramify in $K_{i-1}$ we conclude that $p_i$ is unramified in $K_{i-1}$, 
and hence $x^{d_i}-p_i$ is an Eisenstein polynomial in $\Oseen_{K_{i-1}}[x]$. Thus, $[K_i:K_{i-1}]=d_i$ is prime, and we conclude that
$K_{i-1}(\alpha)=K_i$ for any $\alpha\in K_{i}\backslash K_{i-1}$. An inequality of Silverman \cite[Theorem 2]{Silverman} (see also \cite[(5)]{WidmerPropN}) implies that
\begin{alignat*}1
h(\alpha)\geq 
\frac{\log (N_{K_{i-1}/\IQ}(D_{K_i/K_{i-1}}))}{2[K_{i-1}:\IQ]d_i(d_i-1)}-\frac{\log d_i}{2(d_i-1)},
\end{alignat*}
where $N_{K_{i-1}/\IQ}(\cdot)$ denotes the norm and $D_{K_i/K_{i-1}}$ denotes the relative discriminant.
A straightforward calculation shows (see \cite[Proof of Theorem 4]{WidmerPropN}) that $p_{i}^{[K_{i-1}:\IQ](d_i-1)}$ divides $N_{K_{i-1}/\IQ}(D_{K_i/K_{i-1}})$. Hence,
\begin{alignat*}1
h(\alpha)\geq \frac{\log p_i}{2d_i}-\frac{\log d_i}{2(d_i-1)}.
\end{alignat*}
Finally, we note that $\deg(\alpha)\geq [K_{i-1}(\alpha):K_{i-1}]=d_i$, and hence
\begin{alignat*}1
\delta_{h_{\gamma}}(K_{i}\backslash K_{i-1})\geq d_i^{\gamma}\left(\frac{\log p_i}{2d_i}-\frac{\log d_i}{2(d_i-1)}\right).
\end{alignat*}
\end{proof}

We are now ready for the proof of Theorem \ref{Prop2}:\\

\noindent{\it Proof of Theorem \ref{Prop2}:}\\
Let $(p_i)_i$ and $(d_i)_i$ be sequences of primes and the $p_i$ strictly increasing
such that $\log(p_i)/d_i$ converges to $2t$. It follows that $p_{i}\notin \{d_1,p_1,\ldots,d_{i-1},p_{i-1}\}$ for all sufficiently large $i$.
Set $K_0=\IQ$ and $K_{i}=K_{i-1}(p_{i}^{1/d_{i}})$ as at the beginning of this section. Applying Lemma \ref{Lemdelta} with $\gamma=0$ we conclude that $\liminf \delta_h(K_{i}\backslash K_{i-1})\geq t$, and hence
by Lemma \ref{LemmaN} we get $\N_h(K)\geq t$.
For the remaining inequality 
we note that $p_i^{1/d_i}$ are all integral (and all distinct for sufficiently large $i$) with height $\log(p_i)/d_i$ converging to $2t$, and thus we immediately get $\N_h(K)\leq \N_h(\Oseen_K)\leq 2t$.
\qed\\

For the proof of  Theorem \ref{PropNBGexplicit} we need the following two lemmas.

\begin{lemma}\label{LemNBG}
Let $0\leq \gamma\leq 1$, and let $(p_i)_i$ and $(d_i)_i$ be sequences of prime numbers such that ${d_i}^{\gamma-1}(\log p_i-\log d_i)\rightarrow \infty$ as $i\rightarrow \infty$, and
$p_i\notin \{d_1,p_1,\ldots,d_{i-1},p_{i-1}\}$ for all $i>i_0$.
Then $L=\IQ(p_i^{1/d_i}; i\in \IN)$ is  $\gamma$-Northcott. 
\end{lemma}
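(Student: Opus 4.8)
The plan is to apply Lemma \ref{LemmaN} with $f=h_\gamma$ and the nested sequence $A_i=K_i$ as set up at the beginning of this section, exactly as in the proof of Theorem \ref{Prop2}. First I would record that by Lemma \ref{Lemdelta} we have $\N_{h_\gamma}(K_i)=\infty$ for every $i\in \IN_0$, so the hypothesis of Lemma \ref{LemmaN} is met, and moreover, since $p_i\notin\{d_1,p_1,\ldots,d_{i-1},p_{i-1}\}$ for all $i>i_0$, the same lemma gives
$$\delta_{h_\gamma}(K_i\backslash K_{i-1})\geq d_i^{\gamma}\left(\frac{\log p_i}{2d_i}-\frac{\log d_i}{2(d_i-1)}\right)$$
for all $i>i_0$. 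The task is then simply to show that the right-hand side tends to $\infty$ as $i\to\infty$; once that is done, Lemma \ref{LemmaN} yields $\N_{h_\gamma}(L)=\liminf_{i\to\infty}\delta_{h_\gamma}(K_i\backslash K_{i-1})=\infty$, which is exactly the statement that $L$ is $\gamma$-Northcott.

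For the remaining estimate I would bound the parenthesised expression from below by a constant multiple of $(\log p_i-\log d_i)/d_i$. Indeed, since $d_i\geq 2$ we have $\tfrac{1}{2(d_i-1)}\leq \tfrac{1}{d_i}$, so
$$d_i^{\gamma}\left(\frac{\log p_i}{2d_i}-\frac{\log d_i}{2(d_i-1)}\right)\geq d_i^{\gamma}\left(\frac{\log p_i}{2d_i}-\frac{\log d_i}{d_i}\right)=\frac{d_i^{\gamma-1}}{2}\big(\log p_i-2\log d_i\big).$$
(If one prefers to match the hypothesis $d_i^{\gamma-1}(\log p_i-\log d_i)\to\infty$ exactly, one can instead write $\tfrac{\log p_i}{2d_i}-\tfrac{\log d_i}{2(d_i-1)}\geq \tfrac{1}{2d_i}(\log p_i-\tfrac{d_i}{d_i-1}\log d_i)\geq \tfrac{1}{4d_i}(\log p_i-\log d_i)$ once $d_i$ is large enough that $\tfrac{d_i}{d_i-1}\leq 2$, i.e. $d_i\geq 2$, together with $\log p_i\geq 2\log d_i$ for large $i$, which itself follows from the divergence hypothesis.) Either way, the hypothesis ${d_i}^{\gamma-1}(\log p_i-\log d_i)\to\infty$ forces the lower bound to diverge, completing the proof.

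The only point requiring a little care — and the closest thing to an obstacle — is making sure the manipulations of the two fractions are valid for \emph{all} large $i$ and that the term $\log d_i$ really is negligible compared to $\log p_i$; this is where one uses that the divergence assumption in particular implies $\log p_i - \log d_i > 0$ eventually, so $\log p_i > \log d_i$, and hence $\log p_i - 2\log d_i \geq (\log p_i-\log d_i) - \log d_i$ can still be shown to diverge after dividing by the harmless factor. Beyond that, the argument is a direct citation of Lemma \ref{Lemdelta} and Lemma \ref{LemmaN}, so there is no substantive difficulty.
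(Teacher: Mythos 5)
Your overall strategy — invoke Lemma \ref{Lemdelta} to verify $\N_{h_\gamma}(K_i)=\infty$ and to lower-bound $\delta_{h_\gamma}(K_i\backslash K_{i-1})$, and then feed the resulting divergence into Lemma \ref{LemmaN} — is exactly the paper's. However, the crucial last step, showing that the lower bound actually diverges under the stated hypothesis, contains a genuine gap in both of your proposed routes.

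Your main route replaces $\tfrac{1}{2(d_i-1)}$ by $\tfrac{1}{d_i}$, arriving at the lower bound $\tfrac{1}{2}d_i^{\gamma-1}(\log p_i - 2\log d_i)$. This is too lossy: the hypothesis $d_i^{\gamma-1}(\log p_i-\log d_i)\to\infty$ does not imply $d_i^{\gamma-1}(\log p_i - 2\log d_i)\to\infty$. For instance, take $\gamma=1$, $d_i\to\infty$ along primes, and primes $p_i$ with $p_i\asymp d_i^{3/2}$; then $\log p_i-\log d_i\asymp\tfrac{1}{2}\log d_i\to\infty$ (so the hypothesis holds), yet $\log p_i - 2\log d_i\asymp -\tfrac{1}{2}\log d_i\to-\infty$, so your weakened bound is useless even though Lemma \ref{Lemdelta}'s actual bound does diverge. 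Your alternative route rests on the claim that $\log p_i\geq 2\log d_i$ for large $i$ ``follows from the divergence hypothesis,'' but this is false for the same reason: the hypothesis only gives $\log p_i - \log d_i\to\infty$, which does not force $\log p_i\geq 2\log d_i$ unless $d_i$ stays bounded.

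The fix is to avoid throwing away the factor $\tfrac{d_i}{d_i-1}$ wholesale. Write the quantity exactly as
\begin{equation*}
d_i^{\gamma}\left(\frac{\log p_i}{2d_i}-\frac{\log d_i}{2(d_i-1)}\right)
=\frac{1}{2}\left(\frac{\log p_i-\log d_i}{d_i^{1-\gamma}}-\frac{\log d_i}{d_i^{2-\gamma}\bigl(1-1/d_i\bigr)}\right),
\end{equation*}
separating the term the hypothesis controls from a remainder. Since $0\leq\gamma\leq 1$ and $d_i\geq 2$, the remainder satisfies $\tfrac{\log d_i}{d_i^{2-\gamma}(1-1/d_i)}\leq \tfrac{\log d_i}{d_i-1}\leq 1$, so the whole expression is $\geq\tfrac{1}{2}\bigl(d_i^{\gamma-1}(\log p_i-\log d_i)-1\bigr)\to\infty$. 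This is the paper's computation, and it is the minimal correction your argument needs.
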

\begin{proof}
Since
\begin{alignat*}1
d_i^{\gamma}\left(\frac{\log p_i}{2d_i}-\frac{\log d_i}{2(d_i-1)}\right)=\frac{1}{2} \left(\frac{\log p_i-\log d_i}{d_i^{1-\gamma}}-\frac{\log d_i}{d_i^{2-\gamma}(1-1/d_i)}\right)
\geq \frac{1}{2} \left(\frac{\log p_i-\log d_i}{d_i^{1-\gamma}}-1\right),
\end{alignat*}
it follows from Lemma \ref{Lemdelta}
that for $i>i_0$ we have
\begin{alignat*}1
\delta_{h_\gamma}(K_{i}\backslash K_{i-1})\geq  \frac{1}{2}\left(\frac{\log p_i-\log d_i}{d_i^{1-\gamma}}-1\right)\rightarrow \infty.
\end{alignat*}
Hence, the claim follows from Lemma \ref{LemmaN}.
\end{proof}

\begin{lemma}\label{Lem2NBG}
Let $0\leq \gamma\leq 1$, $\epsilon>0$, and let $(p_i)_i$ and $(d_i)_i$ be sequences of prime numbers such that ${d_i}^{\gamma-1}(\log p_i-\log d_i)\rightarrow \infty$ 
and ${d_i}^{\gamma-\epsilon -1}\log p_i\rightarrow 0$ as $i\rightarrow \infty$,  and
$p_i\notin \{d_1,p_1,\ldots,d_{i-1},p_{i-1}\}$ for all $i>i_0$.
Then $L=\IQ(p_i^{1/d_i}; i\in \IN)$ is  $\gamma$-Northcott but not $(\gamma-\epsilon)$-Bogomolov. 
\end{lemma}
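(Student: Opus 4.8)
The plan is to prove Lemma \ref{Lem2NBG} by combining the $\gamma$-Northcott conclusion of Lemma \ref{LemNBG} with an explicit construction of infinitely many elements of $L\backslash\mu$ whose $h_{\gamma-\epsilon}$-value tends to $0$. For the first half, I would simply invoke Lemma \ref{LemNBG}: the hypothesis ${d_i}^{\gamma-1}(\log p_i-\log d_i)\to\infty$ together with $p_i\notin\{d_1,p_1,\ldots,d_{i-1},p_{i-1}\}$ for $i>i_0$ is exactly what that lemma needs, so $L$ is $\gamma$-Northcott immediately.

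For the second half, the natural candidates are the generators themselves, $\alpha_i=p_i^{1/d_i}$. They lie in $L$, they are pairwise distinct for $i$ large (since the $p_i$ are eventually distinct primes), and none of them is a root of unity (each has absolute value $p_i^{1/d_i}>1$). I would compute $h(\alpha_i)$: since $x^{d_i}-p_i$ is irreducible over $\IQ$ (Eisenstein, as $p_i$ is prime), $\alpha_i$ has degree exactly $d_i$ over $\IQ$ and the only nontrivial local contributions to the Weil height come from the archimedean places (where all $d_i$ conjugates have modulus $p_i^{1/d_i}$) and the place above $p_i$, giving $h(\alpha_i)=\frac{\log p_i}{d_i}$. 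Hence
\[
h_{\gamma-\epsilon}(\alpha_i)=(\deg\alpha_i)^{\gamma-\epsilon}h(\alpha_i)=d_i^{\gamma-\epsilon}\cdot\frac{\log p_i}{d_i}=\frac{\log p_i}{d_i^{1-\gamma+\epsilon}}={d_i}^{\gamma-\epsilon-1}\log p_i,
\]
which tends to $0$ by the second hypothesis of the lemma. Therefore $\{\alpha\in L\backslash\mu;\ h_{\gamma-\epsilon}(\alpha)<X\}$ is infinite for every $X>0$, so $\N_{h_{\gamma-\epsilon}}(L\backslash\mu)=0$, i.e. $L$ is not $(\gamma-\epsilon)$-Bogomolov.

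The only genuinely delicate point is the height computation $h(p_i^{1/d_i})=\frac{\log p_i}{d_i}$, and even that is standard: for a root $\alpha$ of the Eisenstein polynomial $x^{d}-p$ over $\IQ$, one has $h(\alpha)=\frac1d h(p)=\frac{\log p}{d}$ because $\alpha^d=p$ implies $d\,h(\alpha)=h(\alpha^d)=h(p)=\log p$ (using $h(\beta^n)=n\,h(\beta)$, valid for any nonzero algebraic $\beta$). I expect no real obstacle here; the main thing to be careful about is making sure the two hypotheses are used in the two separate conclusions and that $\alpha_i\notin\mu$ and the $\alpha_i$ are genuinely distinct, both of which are clear since $|\alpha_i|=p_i^{1/d_i}$ is strictly increasing in $i$ (or at least takes infinitely many distinct values) once the $p_i$ are eventually distinct primes. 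I would close by remarking that Theorem \ref{PropNBGexplicit} follows from Lemma \ref{Lem2NBG} by checking that the prescribed choice of primes ($d_{i+1}\geq 2d_i$ and $d_i^{1-\gamma+\epsilon/2}\leq\log p_i\leq\log 2+d_i^{1-\gamma+\epsilon/2}$) satisfies both asymptotic conditions and the non-ramification condition.
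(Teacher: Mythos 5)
Your argument is exactly the one in the paper: invoke Lemma \ref{LemNBG} for the $\gamma$-Northcott part and observe that $h_{\gamma-\epsilon}(p_i^{1/d_i})=d_i^{\gamma-\epsilon-1}\log p_i\to 0$ for the failure of $(\gamma-\epsilon)$-Bogomolov. Your additional checks (Eisenstein for $\deg p_i^{1/d_i}=d_i$, $h(p_i^{1/d_i})=\tfrac{1}{d_i}\log p_i$ via $h(\alpha^d)=d\,h(\alpha)$, and the elements being distinct and non-roots of unity) are correct and just make explicit what the paper leaves implicit.
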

\begin{proof}
Since $h_{\gamma-\epsilon}(p_i^{1/d_i})=(\deg p_i^{1/d_i})^{\gamma-\epsilon} h(p_i^{1/d_i})={d_i}^{\gamma-\epsilon -1}\log p_i\rightarrow 0$
the claim follows immediately from Lemma \ref{LemNBG}.
\end{proof}

\noindent{\it Proof of Theorem \ref{PropNBGexplicit}:}\\
Theorem \ref{PropNBGexplicit} now follows easily from Lemma \ref{Lem2NBG} as  all sequences of primes $(d_i)_i$ with $d_{i+1}\geq 2d_{i}$, and  $(p_i)_i$ with $e^{d_i^{1-\gamma+\epsilon/2}}\leq p_i\leq 2e^{d_i^{1-\gamma+\epsilon/2}}$
satisfy the required conditions of Lemma \ref{Lem2NBG}. 
\qed

\bibliographystyle{amsplain}
\bibliography{literature}

\end{document}